\providecommand{\U}[1]{\protect\rule{.1in}{.1in}}
\newenvironment{Question}{\par\noindent\textbf{Question:}\ }{\par}
\newtheorem{mydef}{Definition}%
\newtheorem{theorem}[mydef]{Theorem}
\newtheorem{lemma}[mydef]{Lemma}
\newtheorem{prop}[mydef]{Proposition}
\newtheorem{cor}[mydef]{Corollary}
\newtheorem{claim}[mydef]{Claim}
\newtheorem{claim*}{Claim}
\newtheorem{question}[mydef]{Question}
\title{Shelah ultrafilters}
\author[Balderas]{Emmanuel Balderas}
\address{Posgrado Conjunto en Ciencias Matemáticas UNAM-UMSNH\\Morelia\\Morelia, Michoacán\\México 58089}
\curraddr{}
    \email{ebalderas@matmor.unam.mx}
    \thanks{The first author has been  supported by CONACyT, Scholarship.} 
\author[Chodounský]{David Chodounský}
\address{Institute of Mathematics of the Czech Academy of Sciences, Žitná 25, Praha 1, Czech Republic}
\curraddr{}
    \email{chodounsky@math.cas.cz}
    \thanks{The second author was supported by the Czech Academy of Sciences CAS (RVO 67985840)}
\author[Guzmán]{Osvaldo Guzmán}
    \address{Centro de Ciencias Matemáticas\\Universidad Nacional Autónoma de México\\Campus Morelia\\Morelia, Michoacán\\México 58089}
    \curraddr{}
    \email{oguzman@matmor.unam.mx}
    \thanks{The third author was supported by a supported by the PAPIIT grant IA104124 and the CONACyT grant cbf2023-2024-903.}
\keywords{Shelah ultrafilters, Ideals, P-points, Hechler trees, Laver trees}
\subjclass[2010]{03C55, 03C25, 03E05}
\begin{document}

\begin{abstract}
In this paper we study a special type of ultrafilter which we call Shelah ultrafilter. We show that it is possible to add a Shelah ultrafilter using a special forcing notion. We also show that Shelah ultrafilters turn out to be $\mathcal{I}$-ultrafilters for many Borel ideals.
\end{abstract}
\maketitle

\section{Introduction and Preliminaries}

In \cite{Shelahnon-P-points}, Shelah defined a family of non-principal ultrafilters on $\omega$ that seem to be, in some sense, very far from P-points. The primary motivation of Shelah came from forcing and independence results. One reason why P-points are important in set theory is because they behave  nicely respect to certain forcings. In many applications it is important to preserve P-points by a countable support iteration of proper forcings. For P-points these issues are very well understood and many results are known in the literature (see~\cite{Shelahproperforcing}). These results provide a controlled way to get ultrafilters which are generated by $\aleph_1<\mathfrak{c}$ sets. The motivating question of Shelah was whether the methods developed for P-points can also be developed for some other classes of ultrafilters. 
Potentially, the ultrafilters Shelah constructed can be used in the future to prove the consistency of $\mathfrak{u}<\mathfrak{c}$ + \textquotedblleft there are not P-points.\textquotedblright 

In this paper we give another presentation of the special ultrafilters 
constructed by Shelah in~\cite{Shelahnon-P-points}. We also study some of their combinatorial properties which are not necessarily directly related to iterated forcing.\\

A \emph{tree} is a partially ordered set $(T,\leq)$ such that $T$ has a minimum element called the \emph{root} (denoted $rt(T)$) and for every $t\in T$, the set $\{s\in T:s\leq t \}$ is well ordered. As usual, we often write just $T$ instead of $(T,\leq)$.
% when the order of $T$ does not generate confusion. 
The members of $T$ are called nodes. For $t\in T$ the immediate successors of $t$ in $T$ is the set $succ_{T}(t)=\{s\in T:t\leq s\wedge\neg(\exists r\in T)(t<r<s)\}$. We define \emph{the cone of $t$ in $T$} as the set $[t]_T=\{s\in T:t\leq s\}$. We say that $S\subseteq T$ is a \emph{subtree} of $T$ if $S\neq\emptyset$ and for every $t\in S$ we have that $\{s\in S:s\leq t\}=\{s\in T:s\leq t\}$. For every $t\in T$ define the set $T_t=\{s\in T: s\leq t \vee t\leq s\}$. Note that $T_t$ is a subtree of $T$ for each $t\in T$. We say that $r\subseteq T$ is a \emph{branch through }$T$ if $r$ is a maximal linearly ordered subset of $T$. 
By $\left[T\right]  $ we denote the set of all branches through $T$. We say that a tree $T$ is well founded if every branch through $T$ is finite. Given a well-founded tree $T$, we say that $s\in T$ is a \emph{leaf} of $T$ if its a maximal node. By $L(T)$ we denote the set of leaves of $T$.\\

An $\textit{ideal}$ on a countable set $X$ is a family of subsets of $X$ that is closed under subsets and finite unions. For every $\mathcal{A}\subseteq\mathcal{P}(X)$ the $\textit{dual family}$ of $\mathcal{A}$ is the set $\mathcal{A^*}=\{X\backslash A:A\in\mathcal{A}\}$. If $\mathcal{I}$ is an ideal on $X$ we say that $A\subseteq X$ is  $\mathcal{I}$ $\textit{positive}$ if $A\notin \mathcal{I}$. The collection of all $\mathcal{I}$ positive sets is denoted by $\mathcal{I}^+$. If $A\in\mathcal{I}^+$, the $\textit{restriction}$ of $\mathcal{I}$ to $A$ is the set $\mathcal{I}\upharpoonright A =\{I\cap A:I\in\mathcal{I}\}$ and it is also an ideal. An ideal $\mathcal{I}$ on $X$ is $\textit{tall}$ if for every $A\in[X]^\omega$ there exists $I\in\mathcal{I}$ such that $|I\cap A|=\omega$.

The dual notion of ideal is called \emph{filter}. A set $\mathcal{F}\subseteq\mathcal{P}(X)$ is a $\textit{filter}$ if $\mathcal{F}^*$ is an ideal. If $\mathcal{F}$ is a filter on $X$ we say that $A\subseteq X$ is $\mathcal{F}$ \emph{positive} if $A$ is $\mathcal{F}^*$ positive; equivalently, $A$ is $\mathcal{F}$ positive if $A\cap F\neq\emptyset$ for every $F\in\mathcal{F}$. In the case that $\mathcal{F}$ is $\subseteq$-maximal, we say that $\mathcal{F}$ is an $\textit{ultrafilter}.$ 

Given a countable set $X$, we can identify $\mathcal{P}(X)$ with the Polish space $2^X$ by associating each $A\subseteq X$ with its characteristic function. In this way, we say that a filter (ideal) on $X$ is Borel (analytic, co-analytic, etc.) if it is Borel (analytic, co-analytic, etc.)  as a subspace of $\mathcal{P}(X)$. 

Let $X$ be a set and $\mathcal{I}$ an ideal on $X.$ If $\varphi\left(
x\right)  $ is a formula, by $\forall^{\mathcal{I}^{\ast}}x\left(
\varphi\left(  x\right)  \right)  $ we mean that the set $\left\{  x\in
X:\varphi\left(  x\right)  \right\}  $ is in $\mathcal{I}^{\ast}.$ Given $X,Y$ two countable sets, $\mathcal{I}$ an ideal on $X$ and
$\mathcal{J}$ an ideal on $Y.$ \emph{The Fubini product of }$\mathcal{I}$
\emph{and} $\mathcal{J}$ is the ideal $\mathcal{I\times J}$ on $X\times Y$ defined as
follows: 
$$A\in\mathcal{I\times J} \textit{    if and only if    } \forall^{\mathcal{I}^{\ast}}b\left(
A(b)\in\mathcal{J}\right).$$

Where $A(b)=\left\{  y\in Y:\left(  b,y\right)  \in A\right\}  $ for every
$b\in X.$

Let $\mathcal{I}$ be an ideal on $X$ and $\mathcal{J}_i$ be ideals on $Y_i$ ($i\in X$). Define $\lim_{i\rightarrow\mathcal{I}}\mathcal{J}_i$ an ideal on $\bigcup_{i\in X} (\{i\}\times Y_i)$ as follows: $$A\in\lim_{i\rightarrow\mathcal{I}}\mathcal{J}_i\textit{ if and only if } \forall^{\mathcal{I}^*}i (A(i)\in\mathcal{J}_i).$$
Observe that in the case that $\mathcal{J}_i=\mathcal{J}$ for every $i\in X$ we have that $\lim_{i\rightarrow\mathcal{I}}\mathcal{J}_i =\mathcal{I}\times\mathcal{J}$.

One of the most important tools for classifying relations between ideals and filters is the $\textit{Kat\v{e}tov order}$ introduced in \cite{RefKatětov}.

\begin{mydef}
Let $\mathcal{I}$ be an ideal on a countable set $X$ and $\mathcal{J}$ be an ideal on a countable set $Y$. We say that $\mathcal{I}$ is Kat\v{e}tov below $\mathcal{J}$ (denoted by $\mathcal{I}\leq_K\mathcal{J}$) if there exists $f:Y\longrightarrow X$ such that $f^{-1}[I]\in\mathcal{J}$ for every $I\in\mathcal{I}$. Such $f$ is called Kat\v{e}tov morphism. If $\mathcal{I}\leq_K\mathcal{J}$ and $\mathcal{J}\leq_K\mathcal{I}$, then we say that $\mathcal{I}$ and $\mathcal{J}$ are Kat\v{e}tov equivalent (denoted by $\mathcal{I}\cong_K\mathcal{J}$).
\end{mydef}

Basic properties of the Kat\v{e}tov are listed on the following Lemma.

\begin{lemma}\label{Basic Katetov}
Let $X$ be a countable set and $\mathcal{I}$ an ideal on $X.$
\begin{enumerate}
\item $fin=\left[  \omega\right]  ^{<\omega}\leq_K\mathcal{I}$.

\item $\mathcal{I}$ and $fin$ are Kat\v{e}tov
equivalent if and only if $\mathcal{I}$ is not tall.

\item If $\mathcal{J}$ is an ideal on $X$ and $\mathcal{J}\subseteq \mathcal{I}$, then $\mathcal{J}\leq_K\mathcal{I}$.

\item If $A\in\mathcal{I}^{+},$ then $\mathcal{I}\leq_K\mathcal{I}\upharpoonright A.$

\item If $A\in\mathcal{I}^{\ast},$ then $\mathcal{I}$ and $\mathcal{I}%
\upharpoonright A$ are Kat\v{e}tov equivalent.
\end{enumerate}
\end{lemma}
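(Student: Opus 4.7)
The plan is to prove each of the five parts by exhibiting an explicit Katětov morphism; all of them reduce to elementary choices of $f$ (an injection, the identity, an inclusion, or a bijection onto a well-chosen set). Apart from (2), no real difficulty arises.

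For (1), fix any injection $f\colon X\to\omega$: if $I\in fin$, then $f^{-1}[I]$ is finite, hence belongs to $\mathcal{I}$ (ideals are implicitly assumed to contain all finite subsets of $X$). For (3), the identity map $\mathrm{id}_X$ is a Katětov morphism from $\mathcal{J}$ to $\mathcal{I}$, because $\mathrm{id}_X^{-1}[J]=J\in\mathcal{J}\subseteq\mathcal{I}$. For (4), the inclusion $\iota\colon A\hookrightarrow X$ satisfies $\iota^{-1}[I]=I\cap A\in\mathcal{I}\upharpoonright A$. For (5), one direction is (4) applied to $A\in\mathcal{I}^+$; for the reverse direction, fix $a_0\in A$ and define $g\colon X\to A$ to be the identity on $A$ and constantly $a_0$ on $X\setminus A$, so that for $I\in\mathcal{I}$ the preimage $g^{-1}[I\cap A]$ is contained in $(I\cap A)\cup(X\setminus A)\in\mathcal{I}$ (using $X\setminus A\in\mathcal{I}$).

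The main step is (2). For the ``if'' direction, assume $\mathcal{I}$ is not tall and pick $A\in[X]^\omega$ such that $I\cap A$ is finite for every $I\in\mathcal{I}$; any bijection $g\colon\omega\to A$ then satisfies $g^{-1}[I]=g^{-1}[I\cap A]\in fin$, so $\mathcal{I}\leq_K fin$, and combined with (1) this yields $\mathcal{I}\cong_K fin$. Conversely, suppose $f\colon\omega\to X$ witnesses $\mathcal{I}\leq_K fin$. Each fiber $f^{-1}[\{x\}]$ must be finite (since $\{x\}\in\mathcal{I}$), so $A=\mathrm{range}(f)$ is infinite, and for every $I\in\mathcal{I}$ we have $|I\cap A|\leq|f^{-1}[I]|<\omega$, showing that $A$ witnesses the non-tallness of $\mathcal{I}$. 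This translation between Katětov-triviality and the existence of a non-tallness witness is the only place where a small argument is genuinely needed; the remaining parts are direct verifications from the definitions.
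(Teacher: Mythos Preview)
Your proof is correct. The paper does not actually supply a proof of this lemma; it merely lists the five statements as basic, well-known facts about the Kat\v{e}tov order. So there is no argument in the paper to compare yours against, and your explicit construction of the Kat\v{e}tov morphisms in each case fills in exactly what the authors left to the reader.
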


To learn more about filters, ideals and the Kat\v{e}tov order the reader can see \cite{COMBFILTERSANDIDEALS}, \cite{KATETOVORDER}, \cite{OrderingMadFamiliesaLaKatětov} and \cite{PIDEALSMICHAELYFER}.

If $\mathcal{I}$ is an ideal on $X$ and $\mathcal{J}$ is an ideal on $Y$, we say that $\mathcal{I}$ is \emph{isomorphic} to $\mathcal{J}$ if there exists a bijective funcion $f: X\longrightarrow Y$ such that $f[A]\in\mathcal{J}$ if and only if $A\in\mathcal{I}$.

\section{Filters generated by trees}

For us, a tree $T$ is $\omega$\emph{-branching }if every node in $T$ is
either a leaf or has $\omega$ many immediate succesors. From now on, we will denote by $\mathbb{W}$ the class of all well-founded, $\omega
$-branching trees. It is easy to prove that every tree in $\mathbb{W}$ is
either finite or countably infinite.

\begin{mydef}
Let $T\in\mathbb{W}$ and $S$ a subtree of $T$.
\begin{enumerate}[\hspace{0.5 cm} (1)]
    \item We say that $S$ is a
Laver subtree of  $T$ if $S\in\mathbb{W}$ and $L(S)\subseteq L(T)$.
\item We say that $S\subseteq T$ is a Hechler subtree of $T$ if $S$ is a Laver subtree of $T$ and for every $t\in S$, we have that   $succ_{T}\left(  t\right)  \backslash succ_{S}\left(  t\right)  $
is finite. 
\end{enumerate}
\end{mydef}

The above definition is based on the definition of Hechler an Laver trees in \cite{miller2012hechlerlavertrees}; however, note that our trees are well founded. We will use $\mathbb{L}(T)$ and $\mathbb{H}(T)$ to denote the set of all Laver and Hechler subtrees of $T$ respectively. Obviously, our notation is inspired by the corresponding forcing notions \cite{Laver}, \cite{Hechler}. The following remarks are immediate from the definitions:

\begin{lemma}\label{subarboles}
Let $T\in\mathbb{W}$ and $S,R$ subtrees of $T.$
\label{Lemma basic Laver Hechler}

\begin{enumerate}
\item $T\in\mathbb{H}\left(  T\right)  .$

\item If $S,R\in\mathbb{H}\left(  T\right)  ,$ then $S\cap R\in
\mathbb{H}\left(  T\right)  .$

\item If $S\in\mathbb{H}\left(  T\right)  $ and $R\in\mathbb{L}\left(
T\right)  ,$ then $S\cap R\in\mathbb{L}\left(  T\right)  .$
\end{enumerate}
\end{lemma}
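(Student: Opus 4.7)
The plan is to verify the three items directly from the definitions, reducing everything to a single bookkeeping fact about immediate successors in subtrees. First I would observe that for any subtree $S$ of $T$ and any $t\in S$ one has $succ_{S}(t)=succ_{T}(t)\cap S$: if $s\in S$ with $t<s$ in $S$, then any $r\in T$ with $t<r<s$ satisfies $r\leq s$ in $T$, so the subtree condition $\{u\in S:u\leq s\}=\{u\in T:u\leq s\}$ forces $r\in S$. Hence immediate successors in $T$ remain immediate in $S$, and consequently $succ_{S\cap R}(t)=succ_S(t)\cap succ_R(t)$ for any two subtrees $S,R$ of $T$ and any $t\in S\cap R$.

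Item (1) is immediate since $succ_T(t)\setminus succ_T(t)=\emptyset$. For item (2), given $S,R\in\mathbb{H}(T)$, I would check that $S\cap R$ is a subtree containing $rt(T)$, is well-founded (inherited from $T$), and satisfies the Hechler condition, by observing that
\[
succ_T(t)\setminus succ_{S\cap R}(t)=\bigl(succ_T(t)\setminus succ_S(t)\bigr)\cup\bigl(succ_T(t)\setminus succ_R(t)\bigr)
\]
is finite as a union of two finite sets. It follows that for $t\in S\cap R$ which is not a leaf of $T$, $succ_{S\cap R}(t)$ is co-finite in the $\omega$-set $succ_T(t)$ and therefore infinite; in particular $L(S\cap R)\subseteq L(T)$ and $S\cap R$ is $\omega$-branching, so $S\cap R\in\mathbb{H}(T)$.

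For item (3) with $S\in\mathbb{H}(T)$ and $R\in\mathbb{L}(T)$, the subtree and well-foundedness arguments are the same. To verify the Laver condition I would split on whether $t\in S\cap R$ is a leaf of $R$: if it is, then $t\in L(R)\subseteq L(T)$ and $t$ is a leaf of $S\cap R$ as well; otherwise $succ_R(t)$ is an $\omega$-set sitting inside $succ_T(t)$, and the Hechler property of $S$ lets me write $succ_{S\cap R}(t)=succ_S(t)\cap succ_R(t)=succ_R(t)\setminus F$ for some finite $F\subseteq succ_T(t)$, so $succ_{S\cap R}(t)$ is still infinite. In particular $L(S\cap R)\subseteq L(R)\subseteq L(T)$, giving $S\cap R\in\mathbb{L}(T)$.

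The only genuine subtlety is the initial observation about immediate successors; without it one might worry that intersecting could create new successor edges, breaking the finite-deletion count. Once that is in hand, the rest is a routine finite-versus-infinite comparison inside each $succ_T(t)$.
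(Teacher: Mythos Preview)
Your proof is correct; the paper itself gives no proof at all, introducing the lemma with the phrase ``The following remarks are immediate from the definitions.'' Your argument is precisely the unpacking one would write if asked to justify that remark, and the key bookkeeping identity $succ_{S}(t)=succ_{T}(t)\cap S$ (hence $succ_{S\cap R}(t)=succ_S(t)\cap succ_R(t)$) is exactly the right observation to make the finite-versus-infinite comparisons go through cleanly.
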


The previous Lemma tells us that intuitively, we can think that the set of Hechler subtrees of $T$ is the set of \textquotedblleft big\textquotedblright\ subtrees of $T$.

\begin{mydef}
Let $T\in\mathbb{W}$ and $A\subseteq T.$ We say that $A$ is a barrier in
 $T$ if the following holds:

\begin{enumerate}
\item $A$ is an antichain.

\item Every branch through $T$ has an element of $A.$
\end{enumerate}
\end{mydef}

Let $T\in\mathbb{W}$ and $A\subseteq T$ an antichain. We define the infinite
game $\mathcal{G}_{T}\left(  A\right)  $ between Player $\mathsf{I}$ and
Player $\mathsf{II}$ as follows:\\

\hfill%
\begin{center}
\begin{tabular}
[c]{|l|l|l|l|l|l|}\hline
$\mathsf{I}$ & $B_{0}$ &  & $B_{1}$ &  & $...$\\\hline
$\mathsf{II}$ &  & $s_{0}$ &  & $s_{1}$ & $...$\\\hline
\end{tabular}
\end{center}
\hfill
\\

With the following conditions:

\begin{enumerate}
\item $B_{0}$ is a finite subset of $succ_{T}\left(  rt\left(  T\right)
\right)  $ and $s_{0}\in succ_{T}\left(  rt\left(  T\right)  \right)  $ is such
that $s_{0}\notin B_{0}.$

\item If $s_{n}$ is not a leaf, then $B_{n+1}$ is a finite subset of
$succ_{T}\left(  s_{n}\right)  $ and $s_{n+1}\in succ_{T}\left(  s_{n}\right)  $
is such that $s_{n+1}\notin B_{n+1}.$

\item If $s_{n}$ is a leaf, then $B_{n+1}=\emptyset$ and $s_{n+1}=s_{n}.$
\end{enumerate}

\qquad\ \ \ \ \ \ \ 

Player $\mathsf{II}$ wins the match if the sequence $\left\langle
s_{n}\right\rangle _{n\in\omega}$ contains an element of $A$ (note that the
sequence is eventually constant since $T$ is well-founded). If
this is not the case, then Player $\mathsf{II}$ loses. Intuitively, we can imagine the game as follows: Player
$\mathsf{II}$ is standing on the root of $T$ and wants to walk in $T$ until
she reaches a branch. Moreover, for some mysterious reason, she wishes to
pass through an element of $A.$ At each step, Player $\mathsf{I}$ is blocking
finitely many paths to Player $\mathsf{II}$. Player $\mathsf{II}$ will win if she is able to walk through an
element of $A$ in spite of the obstructions placed by Player $\mathsf{I.}%
$\qquad\qquad

\begin{prop}
Let $T\in\mathbb{W}$ and $A\subseteq T$ an antichain. The following holds:
\label{estrategia del juego}

\begin{enumerate}
\item Player $\mathsf{II}$ has a winning strategy in $\mathcal{G}_{T}\left(
A\right)  $ if and only if there is $S\in\mathbb{L}\left(  T\right)  $
such that $S\cap A$ is a barrier in $S.$

\item Player $\mathsf{I}$ has a winning strategy in $\mathcal{G}_{T}\left(
A\right)  $ if and only if there is $S\in\mathbb{H}\left(  T\right)  $
such that $S\cap A=\emptyset.$
\end{enumerate}
\end{prop}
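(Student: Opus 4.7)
I would prove the proposition by treating the four implications separately; the two ``if'' directions are immediate once an explicit strategy is written down, while the two ``only if'' directions require extracting a subtree from a given winning strategy.

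For $(1) \Leftarrow$, given $S \in \mathbb{L}(T)$ with $S \cap A$ a barrier in $S$, Player $\mathsf{II}$'s strategy is to stay inside $S$: at every non-leaf $s$ of $S$ the set $succ_{S}(s)$ is infinite, so after any finite block $B \subseteq succ_{T}(s)$ she can choose a successor in $succ_{S}(s) \setminus B$. The resulting play traces a branch of $S$, which by assumption meets $S \cap A$. For $(2) \Leftarrow$, given $S \in \mathbb{H}(T)$ with $S \cap A = \emptyset$, Player $\mathsf{I}$ plays $B_{n+1} := succ_{T}(s_n) \setminus succ_{S}(s_n)$; this set is finite by the Hechler property, and it forces every $s_n$ into $S$, which avoids $A$.

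For $(2) \Rightarrow$, suppose $\tau$ is a winning strategy for Player $\mathsf{I}$. Build $S$ top-down: put $rt(T) \in S$ and, for each $s_k \in S$ reached by some $\tau$-consistent play $(s_0, \ldots, s_k)$, declare $succ_{S}(s_k) := succ_{T}(s_k) \setminus \tau(s_0, \ldots, s_k)$. Since $\tau$ always returns a finite set, only finitely many children are removed at each node, which makes $S$ a Hechler subtree of $T$, and its leaves are exactly the leaves of $T$ that sit in $S$. If some $s \in S \cap A$ existed, the path from $rt(T)$ to $s$ inside $S$ would describe a legal play consistent with $\tau$ in which Player $\mathsf{II}$ hits $A$, contradicting the choice of $\tau$.

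The subtlest case is $(1) \Rightarrow$. Given a winning strategy $\sigma$ for Player $\mathsf{II}$, let $S$ be the collection of $rt(T)$ together with every Player $\mathsf{II}$ position occurring in some $\sigma$-consistent play. This set is closed under $T$-predecessors, because predecessors of $s_n$ in $T$ are exactly the earlier moves $s_0, \ldots, s_{n-1}$ of the same play. The crucial step is showing $S$ is $\omega$-branching: fix a non-leaf $s \in S$ and pick any history $h = \langle B_0, s_0, \ldots, B_n, s_n = s \rangle$ consistent with $\sigma$; then the map $B \mapsto \sigma(h, B)$ sends each finite $B \subseteq succ_{T}(s)$ into $succ_{T}(s) \setminus B$, so if its range were some finite set $\{t_1, \ldots, t_m\}$, taking $B := \{t_1, \ldots, t_m\}$ would force $\sigma(h, B) \notin \{t_1, \ldots, t_m\}$, a contradiction. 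Hence $succ_{S}(s)$ is infinite and $L(S) \subseteq L(T)$, so $S \in \mathbb{L}(T)$. For the barrier property, any branch of $S$ is a chain $rt(T), s_0, \ldots, s_m$ ending at a leaf of $T$; by definition of $S$, some $\sigma$-consistent play $p$ reaches $s_m$ as a Player $\mathsf{II}$ move, and the tree structure of $T$ forces Player $\mathsf{II}$'s moves in $p$ to coincide with the chain. That play is winning for Player $\mathsf{II}$, so some $s_i$ lies in $A$, and the branch meets $S \cap A$. The main conceptual obstacle is exactly this $\omega$-branching diagonalization; the remaining verifications are translations between game histories and chains of subtrees.
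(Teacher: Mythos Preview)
Your proof is correct and follows essentially the same approach as the paper: in each direction you build the subtree $S$ from the set of nodes Player $\mathsf{II}$ can reach when the relevant player follows the given winning strategy, exactly as the paper does. Your write-up is more explicit---in particular, your diagonalization showing that the range of $B \mapsto \sigma(h,B)$ is infinite spells out what the paper compresses into the single sentence ``if a non-maximal node of $S$ was finitely branching, Player $\mathsf{I}$ could win by playing that finite set''---but the underlying idea is identical.
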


\begin{proof}
We start by proving the first point. If there is $S\in\mathbb{L}\left(
T\right)  $ such that $S\cap A$ is a barrier in $S,$ then Player $\mathsf{II}$
can win the game by making sure she always walks in $S$ (it is possible to do
so, since the moves of Player $\mathsf{I}$ are finite, while non-maximal
points of Laver trees are infinitely branching). Assume now that Player $\mathsf{II}$ has a
winning strategy in $\mathcal{G}_{T}\left(  A\right)  .$ Fix one of her
winning strategies\textsf{.} Let $S\subseteq T$ be the set of all nodes in $T$
that she can reach during a run of the game (in which she is using his fixed
strategy). Since this is a winning strategy, every branch through $S$ must
pass through an element of $A.$ Furthermore, $S$ must be in $\mathbb{L}\left(  T\right)  ,$ since if a non-maximal branch of $S$ was finitely
branching, Player $\mathsf{I}$ could win by playing that finite set.

\qquad\qquad\qquad\qquad\qquad

We now prove the second point. If there is $S\in\mathbb{H}\left(
T\right)  $ such that $S\cap A=\emptyset,$ then Player $\mathsf{I}$ can win by
forcing Player $\mathsf{II}$ to always walk in $S.$ Assume now that she Player $\mathsf{I}$ has a
winning strategy in $\mathcal{G}_{T}\left(  A\right)  .$ Fix one of his
winning strategies$\mathsf{.}$ Let $S\subseteq T$ be the set of all nodes in
$T$ that Player $\mathsf{II}$ can reach during a run of the game (in which
Player $\mathsf{I}$ is using his fixed strategy). Since this is a winning
strategy, no branch of $S$ must pass through an element of $A.$ Furthermore,
$S\in\mathbb{H}\left(  T\right)  $ since Player $\mathsf{I}$ only blocks
finitely many nodes at each move.
\end{proof}

It is clear that $\mathcal{G}_{T}\left(  A\right) $ is a clopen game, i.e., every match is decided in a finite numbers of steps. So, by the Gale-Stewart Theorem, 
we have that  $\mathcal{G}_{T}\left(  A\right) $ is determined (see \cite{Kechris}). Hence we conclude the following dichotomy:

\begin{cor}
Let $T\in\mathbb{W}$ and $A\subseteq T$ an antichain. One and only one of the
following statements holds: \label{corol dicotomia}

\begin{enumerate}
\item There is $S\in\mathbb{H}\left(  T\right)  $ such that $S\cap
A=\emptyset.$

\item There is $S\in\mathbb{L}\left(  T\right)  $ such that $S\cap A$ is a
barrier in $S.$
\end{enumerate}
\end{cor}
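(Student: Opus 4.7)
The plan is to deduce this dichotomy directly from Proposition \ref{estrategia del juego} together with a determinacy argument, as already foreshadowed in the paragraph preceding the statement. First I would verify that $\mathcal{G}_{T}(A)$ is indeed a clopen game: since $T\in\mathbb{W}$ is well-founded and Player $\mathsf{II}$'s tokens $s_n$ move along edges of $T$, each play is forced to reach a leaf in finitely many rounds, and thereafter $s_n$ stays constant. Hence the winning condition for Player $\mathsf{II}$ — that some $s_n$ belong to $A$ — is decided after finitely many moves, so the set of winning plays is clopen in the space of all plays.

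Next I would invoke the Gale–Stewart theorem to conclude that $\mathcal{G}_{T}(A)$ is determined, so one of the two players has a winning strategy. Proposition \ref{estrategia del juego} then translates the two alternatives into exactly the two alternatives of the corollary: a winning strategy for Player $\mathsf{I}$ is equivalent to (1), and a winning strategy for Player $\mathsf{II}$ is equivalent to (2). This gives the \emph{existence} half of the dichotomy.

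For the \emph{uniqueness} half (the word ``only'') I would note that no two-player game can be won simultaneously by both players, so at most one of (1), (2) can hold. If one prefers a purely combinatorial argument, one can verify mutual exclusivity directly: suppose $S_1\in\mathbb{H}(T)$ with $S_1\cap A=\emptyset$ and $S_2\in\mathbb{L}(T)$ with $S_2\cap A$ a barrier in $S_2$ coexisted. By Lemma \ref{subarboles}(3), $S_1\cap S_2\in\mathbb{L}(T)$; any branch $b$ through $S_1\cap S_2$ ends at a leaf of $T$, which is maximal in $S_2$, so $b$ is also a branch through $S_2$ and must meet $A$, contradicting $(S_1\cap S_2)\cap A\subseteq S_1\cap A=\emptyset$. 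I do not anticipate a serious obstacle here, since the content is essentially the trivial determinacy of a finite-horizon game once Proposition \ref{estrategia del juego} is in place.
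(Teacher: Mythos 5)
Your proposal is correct and follows essentially the same route as the paper, which likewise derives the corollary by observing that $\mathcal{G}_{T}(A)$ is clopen (every match being decided in finitely many steps since $T$ is well-founded), invoking Gale--Stewart determinacy, and translating via Proposition \ref{estrategia del juego}. Your explicit verification of mutual exclusivity---both the game-theoretic remark and the direct combinatorial argument via Lemma \ref{subarboles}(3), which does check out since a branch through $S_1\cap S_2$ ends at a leaf of $T$ and is therefore a branch through $S_2$---is a detail the paper leaves implicit, but it does not change the approach.
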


There is also a \textquotedblleft dual\textquotedblright\ of the previous
dichotomy, in which the outcome of the two possiblities are switched:

\begin{cor}
Let $T\in\mathbb{W}$ and $A\subseteq T$ an antichain. One and only one of the
following statements holds: \label{corol dicotomia 2}

\begin{enumerate}
\item There is $S\in\mathbb{H}\left(  T\right)  $ such that $S\cap A$ is a
barrier in $S.$

\item There is $S\in\mathbb{L}\left(  T\right)  $ such that $S\cap
A=\emptyset.$
\end{enumerate}
\end{cor}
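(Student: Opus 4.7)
The plan is to reduce this dual dichotomy to Corollary~\ref{corol dicotomia} by passing to a cleverly chosen antichain. Set $A^{\uparrow}=\{t\in T:(\exists a\in A)\,a\leq t\}$ and let $B=L(T)\setminus A^{\uparrow}$ be the set of leaves of $T$ whose unique path to the root misses $A$. Since $B\subseteq L(T)$ consists of pairwise incomparable nodes, $B$ is an antichain, so Corollary~\ref{corol dicotomia} applies to $B$.

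The bridge between $A$ and $B$ is the following observation, which relies crucially on the subtree property stating that ancestors are preserved: for any subtree $S\subseteq T$ and any $l\in S\cap L(T)$, the ancestors of $l$ in $S$ coincide with the ancestors of $l$ in $T$. Hence the unique branch of $S$ ending at $l$ meets $A$ if and only if $l\in A^{\uparrow}$, equivalently $l\notin B$.

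First I would handle the case where Corollary~\ref{corol dicotomia} returns $S\in\mathbb{H}(T)$ with $S\cap B=\emptyset$. Every leaf of $S$ is a leaf of $T$ (since $\mathbb{H}(T)\subseteq\mathbb{L}(T)$) and lies in $A^{\uparrow}$; for each such leaf $l$, pick some $a_l\in A$ with $a_l\leq l$, and observe that $a_l\in S$ by the subtree property. Therefore every branch of $S$ meets $A$, and since $S\cap A\subseteq A$ is automatically an antichain, $S\cap A$ is a barrier in $S$, giving conclusion~(1). If instead Corollary~\ref{corol dicotomia} delivers $S\in\mathbb{L}(T)$ with $S\cap B$ a barrier in $S$, then every branch of $S$ ends at a leaf in $B$, so by the bridging observation no branch of $S$ meets $A$; since every node of $S$ extends upward to a branch through $S$, this forces $S\cap A=\emptyset$, which is conclusion~(2).

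Mutual exclusivity is inherited from Corollary~\ref{corol dicotomia}, or can be verified directly by intersecting two hypothetical witnesses via Lemma~\ref{subarboles}(3) and deriving a contradiction from a branch that is forced both to meet and to avoid $A$. I expect the main subtlety to be the careful use of the subtree property in the Hechler case, to guarantee that the chosen witnesses $a_l$ actually belong to $S$; once this is recognized, the remainder is essentially a translation along the correspondence between leaves of $S$ and the membership relation in $A^{\uparrow}$.
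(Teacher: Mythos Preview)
Your argument is correct and follows essentially the same route as the paper: both define $B$ as the set of leaves of $T$ not lying above any element of $A$, apply Corollary~\ref{corol dicotomia} to $B$, and translate the two outcomes back to the desired dichotomy. Your write-up is a bit more explicit than the paper's in invoking the subtree property to ensure the witnesses $a_l$ lie in $S$, and in the direct verification of mutual exclusivity via Lemma~\ref{subarboles}(3); the paper simply asserts these points as clear.
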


\begin{proof}
It is clear that at most one of the possibilities may occur. We will now prove
that at least one of them does. Define $B$ as  the collection of leaves that
do not extend an element of $A$. Since $B$ is a set of leaves, it follows that
it is an antichain. We can now apply the Corollary \ref{corol dicotomia} to
$B.$

First assume that there is $S\in\mathbb{H}\left(  T\right)  $ such that
$S\cap B=\emptyset.$ In this way, if $s\in L\left(  S\right)  ,$ then $s\notin
B,$ so $s$ extends an element of $A$. It follows that $A$ is a barrier in
$S.$

Now assume that there is $S\in\mathbb{L}\left(  T\right)  $ such that
$S\cap B$ is a barrier in $S.$ In this way, $L\left(  S\right)  \subseteq B,$
so it must be the case that $S\cap A=\emptyset.$
\end{proof}

\begin{mydef}
Let $T$ be an element of $\mathbb{W}$. Define $\mathcal{F}\left(  T\right)  $ as the filter in
$L\left(  T\right)  $ generated by $\{L(S):S\in\mathbb{H}(T)\}.$
\end{mydef}

Note that $\mathcal{F}\left(  T\right)  $ is really a filter by Lemma
\ref{Lemma basic Laver Hechler}. Furthermore, if $T\in\mathbb{W}$ it is easy to see that every cofinite subset
of $L\left(  T\right)  $ belongs to $\mathcal{F}\left(  T\right)  .$ Ultrafilters defined with trees have been studied in \cite{CanonicalCofinalMaps}, and \cite{CofinalTypesofUltrafilters}. 

Let $T\in\mathbb{W}$ and
$\left\{  s_{n}: n\in\omega\right\}$ be an enumeration of $succ_T(rt(T))$. For every $n\in\omega,$ 
define $S^{n}=T_{s_{n}}\setminus\left\{  rt\left(  T\right)  \right\} $
view it as a suborder of $T.$ Note that for every $X\subseteq L\left(
T\right)  ,$ the following holds:

\begin{center}%
\begin{tabular}
[c]{l}%
$X\in\mathcal{F}\left(  T\right)  $ if and only if $\forall^{fin^*}n\left(
X\in\mathcal{F}(S^{n})\right).  $%
\end{tabular}
\end{center}

We have the following characterization of the $\mathcal{F}\left(  T\right)  $
positive sets:

\begin{prop}
Let $T\in\mathbb{W}$ and $X\subseteq L\left(  T\right)  .$
\label{dicotomia conjunto}

\begin{enumerate}
\item One and only one of the following conditions holds:

\begin{enumerate}
\item There is $S\in\mathbb{L}\left(  T\right)  $ such that $L\left(
S\right)  \subseteq X.$

\item There is $S\in\mathbb{H}\left(  T\right)  $ such that $L\left(
S\right)  \cap X=\emptyset.$
\end{enumerate}

\item $X\in\mathcal{F}\left(  T\right)  ^{+}$ if and only if there is
$S\in\mathbb{L}\left(  T\right)  $ such that $L\left(  S\right)  \subseteq
X.$
\end{enumerate}
\end{prop}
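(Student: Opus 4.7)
The plan is to reduce both parts to the game-theoretic dichotomy in Corollary \ref{corol dicotomia} applied to the set $X$. First observe that $X\subseteq L(T)$ is automatically an antichain, since distinct leaves of $T$ are incomparable. So we may legitimately feed $X$ into the dichotomy, which produces either $S\in\mathbb{H}(T)$ with $S\cap X=\emptyset$, or $S\in\mathbb{L}(T)$ with $S\cap X$ a barrier in $S$.

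In the first alternative, $L(S)\subseteq S$ immediately gives $L(S)\cap X=\emptyset$, yielding clause (b) of part (1). For the second alternative, I would argue $L(S)\subseteq X$: every leaf $x\in L(S)$ lies on some branch of $S$, which must meet $S\cap X$; but $x\in L(S)\subseteq L(T)$ because $S\in\mathbb{L}(T)$, so $x$ has no successor in $T$, and hence no element of $X\subseteq L(T)$ can lie strictly above $x$ on the branch. Since elements of $X$ are pairwise incomparable with other leaves, the only point of the branch that can belong to $X$ is its terminal leaf, forcing $x\in X$. This yields clause (a). For the disjointness of (a) and (b), suppose $S_1\in\mathbb{L}(T)$ with $L(S_1)\subseteq X$ and $S_2\in\mathbb{H}(T)$ with $L(S_2)\cap X=\emptyset$. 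By Lemma \ref{subarboles}(3), $S_1\cap S_2\in\mathbb{L}(T)$, so it has a leaf $x$, and $x\in L(T)$ means $x$ has no successor in $T$, so $x\in L(S_1)\cap L(S_2)$; this would place $x$ simultaneously in $X$ and outside $X$, contradiction.

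Part (2) then follows directly. For the nontrivial direction, if $X\in\mathcal{F}(T)^+$ then clause (b) of part (1) is impossible, because any witness $S\in\mathbb{H}(T)$ with $L(S)\cap X=\emptyset$ would give $L(S)\in\mathcal{F}(T)$ disjoint from $X$. Hence clause (a) holds. Conversely, if $S\in\mathbb{L}(T)$ satisfies $L(S)\subseteq X$, then for every generator $L(R)$ with $R\in\mathbb{H}(T)$, one has $S\cap R\in\mathbb{L}(T)$ by Lemma \ref{subarboles}(3), and the leaf-in-$L(T)$ argument above shows $L(S\cap R)\subseteq L(S)\cap L(R)$; since $S\cap R$ is nonempty and well-founded, this intersection is nonempty, so $X\cap L(R)\neq\emptyset$.

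The only delicate point is the step \emph{``$S\cap X$ a barrier in $S$ implies $L(S)\subseteq X$''}; I expect this to be the main place to be careful, since one must exploit both the hypothesis $X\subseteq L(T)$ (so elements of $X$ cannot lie strictly below other leaves of $T$) and the fact that $L(S)\subseteq L(T)$ when $S\in\mathbb{L}(T)$. Everything else is a bookkeeping application of Lemma \ref{subarboles} and the fact that $\mathcal{F}(T)$ is generated by the sets $L(S)$ with $S\in\mathbb{H}(T)$.
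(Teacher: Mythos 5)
Your proof is correct and takes essentially the same approach as the paper: note that $X\subseteq L(T)$ is an antichain, apply Corollary \ref{corol dicotomia} to it, and deduce part (2) from part (1). The paper compresses all of this into a ``straightforward application,'' while you correctly spell out the details it leaves implicit, namely that a barrier $S\cap X$ with $X\subseteq L(T)$ forces $L(S)\subseteq X$, the mutual exclusivity of (a) and (b) via Lemma \ref{subarboles}, and the verification against the generators of $\mathcal{F}(T)$.
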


\begin{proof}
We start by proving the first point. Note that $X$ itself is an antichain of
$T.$ A straightforward application of Corollary \ref{corol dicotomia} gives
the desired result. Finally, note that the second point is a consequence of
the first.
\end{proof}

Let $\alpha$ be a limit countable ordinal. By $bnd(\alpha)$ we denote the ideal of all bounded subsets of $\alpha$ (in
other words, $bnd(\alpha)$ is generated by $\left\{
\beta:\beta<\alpha\right\}  $). It is easy to see that this is an ideal.
The following ideals were introduced by Kat\v{e}tov in
\cite{OnDescriptiveClassificationofFunctions}:

\begin{mydef}
For every countable ordinal $\alpha,$ we will define a countable set
$X_{\alpha}$ and an ideal $fin^\alpha$ on $X_{\alpha}$ as follows:
\label{fin alpha}

\begin{enumerate}
\item $X_{0}=\left\{  0\right\}  $ and $fin^0=\left\{
\emptyset\right\}  .$

\item $X_{1}=\omega$ and $fin^1=fin$

\item $X_{\alpha+1}=\omega\times X_{\alpha}$ and $fin^{\alpha+1}=fin\times fin^\alpha.$

\item Let $\alpha$ be a limit ordinal. Define $X_{\alpha}=\bigcup_{\beta<\alpha}\left(  \left\{  \beta\right\}  \times X_{\beta}\right)$ and $fin^\alpha=\lim_{\beta\rightarrow bnd(\alpha)} fin^\beta$.
\end{enumerate}
\end{mydef}

In \cite{KatetovOrderImply} it was proved the following important result.

\begin{theorem}[Barbarski, Filipów, Mrozek and Szuca]
Let $\mathcal{I}$ be an ideal on a countable set $X$ and $\alpha<\omega_1$. If $fin^\alpha\leq_K\mathcal{I}$, then there exists a bijection then there is a bijection witnessing it.
\end{theorem}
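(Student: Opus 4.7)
The plan is to argue by transfinite induction on $\alpha<\omega_{1}$, exploiting the recursive definition of $fin^{\alpha}$ from Definition~\ref{fin alpha}. The base case $\alpha=0$ is trivial since $X_{0}$ is a singleton. For $\alpha=1$, a Katětov morphism $f\colon X\to\omega$ witnessing $fin\leq_{K}\mathcal{I}$ has every point-fiber $f^{-1}(n)$ in $\mathcal{I}$; after discarding points outside $f[X]$, picking a transversal and (if necessary) absorbing a cofinite adjustment into an $\mathcal{I}^{\ast}$-set, one obtains a bijection witnessing the reduction.

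For the successor step $\alpha=\beta+1$, write $X_{\alpha}=\omega\times X_{\beta}$ and $fin^{\alpha}=fin\times fin^{\beta}$. Given $f\colon X\to\omega\times X_{\beta}$, set $A_{n}:=f^{-1}(\{n\}\times X_{\beta})$ and let $f_{n}\colon A_{n}\to X_{\beta}$ be the natural projection. Since $\{n\}\times I\in fin^{\alpha}$ whenever $I\in fin^{\beta}$, the restriction $f_{n}$ witnesses $fin^{\beta}\leq_{K}\mathcal{I}\!\upharpoonright\! A_{n}$ whenever $A_{n}\in\mathcal{I}^{+}$. I would first pass to an $\mathcal{I}$-positive refinement $X'\subseteq X$ for which infinitely many layers $A_{n}\cap X'$ are $\mathcal{I}$-positive, then apply the inductive hypothesis to each such layer to obtain bijective witnesses $g_{n}\colon B_{n}\to X_{\beta}$, and finally re-enumerate the qualifying indices as $\omega$ and glue the $g_{n}$ into a single bijection from $\bigcup_{n}B_{n}$ onto $\omega\times X_{\beta}$. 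The limit step is analogous: fix a strictly increasing cofinal sequence $\beta_{n}\to\alpha$, decompose $X_{\alpha}=\bigsqcup_{\beta<\alpha}X_{\beta}$, and invoke the inductive hypothesis layer by layer along $(\beta_{n})$.

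\textbf{Main obstacle.} The delicate point is verifying that the glued bijection $g$ is itself a Katětov morphism, not merely a set-theoretic bijection. Since $\mathcal{I}$ need not be a $\sigma$-ideal, a countable union of $\mathcal{I}$-sets need not lie in $\mathcal{I}$, so a general $I\in fin^{\alpha}$ pulled back through $g$ yields a countable union whose membership in $\mathcal{I}$ is not automatic. The remedy is to identify $X_{\alpha}$ with the set of leaves $L(T_{\alpha})$ of a canonical well-founded $\omega$-branching tree $T_{\alpha}\in\mathbb{W}$ whose associated filter satisfies $\mathcal{F}(T_{\alpha})^{\ast}=fin^{\alpha}$, and then use the dichotomies of Proposition~\ref{estrategia del juego} and Corollary~\ref{corol dicotomia 2} to pre-refine $X$ so that a Hechler-subtree-shaped system of coherent choices is available at every stage of the inductive construction; this is what makes the final gluing compatible with the Katětov condition.
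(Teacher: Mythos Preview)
The paper does not contain a proof of this theorem. It is quoted as a result of Barbarski, Filip\'ow, Mrozek and Szuca from the external reference \cite{KatetovOrderImply}, and is used only as a black box to derive Corollary~\ref{copiaisomorfa}. There is therefore no in-paper argument to compare your proposal against.

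A couple of remarks on your sketch nonetheless. First, your inductive step produces a bijection from $\bigcup_{n} B_{n}$ onto $X_{\alpha}$, but the statement asks for a bijection with domain all of $X$; you have not said how the leftover set $X\setminus\bigcup_{n}B_{n}$ is absorbed, and since your map is already onto $X_{\alpha}$ there is no room to extend it. In the original argument this is handled by a Cantor--Bernstein type bookkeeping (or by noting that the complement lies in $\mathcal{I}$ and re-indexing), and that step deserves to be made explicit. Second, your proposed remedy for the gluing obstacle --- invoking the tree dichotomies of Proposition~\ref{estrategia del juego} and Corollary~\ref{corol dicotomia 2} --- is circular in spirit: those dichotomies are tools developed \emph{in this paper}, whereas the theorem you are proving is imported from prior work and is logically independent of the tree machinery here. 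The actual obstacle you identify is real, but its resolution in the literature is purely combinatorial: one checks directly that for $I\in fin^{\alpha}$ the preimage $g^{-1}[I]$ decomposes as a finite union of full layers (each in $\mathcal{I}$ because the original $f$ was Kat\v{e}tov) together with pieces controlled by the inductive hypothesis, so no $\sigma$-ideal assumption is needed.
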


In \cite{StructuralKatetov} it was proved that if $\alpha<\beta,$ then
$fin^{\beta}$ is strictly Kat\v{e}tov above $fin^{\alpha}.$ So, using the above Theorem we can conclude the following.

\begin{cor}\label{copiaisomorfa}
Let $\mathcal{I}$ be an ideal Kat\v{e}tov equivalent to $fin^\alpha$ for some $\alpha\in\omega_1$. Then $\mathcal{I}$ contains an isomorphic copy of $fin^\beta$ for every $\beta\leq \alpha$, i.e, there exists $\mathcal{J}$ an ideal included in $\mathcal{I}$ such that $\mathcal{J}$ is isomorphic to $fin^\beta$.
\end{cor}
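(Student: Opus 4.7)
The plan is to chain the two cited facts about $fin^{\alpha}$: first use the Structural Katětov result from \cite{StructuralKatetov} to move from $\alpha$ down to $\beta$, and then invoke the Barbarski--Filipów--Mrozek--Szuca theorem to upgrade the resulting Katětov reduction to an honest bijection, which will automatically give the isomorphic copy inside $\mathcal{I}$.

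More concretely, fix $\beta\leq\alpha$. From the strict hierarchy $fin^{\gamma}<_K fin^{\delta}$ for $\gamma<\delta$ established in \cite{StructuralKatetov}, we have $fin^{\beta}\leq_K fin^{\alpha}$ (the case $\beta=\alpha$ being trivial). Since $\mathcal{I}\cong_K fin^{\alpha}$, in particular $fin^{\alpha}\leq_K\mathcal{I}$, and transitivity of $\leq_K$ yields $fin^{\beta}\leq_K\mathcal{I}$. Letting $Y$ denote the underlying countable set of $\mathcal{I}$, the Barbarski--Filipów--Mrozek--Szuca theorem now supplies a \emph{bijection} $f\colon Y\to X_{\beta}$ with $f^{-1}[I]\in\mathcal{I}$ for every $I\in fin^{\beta}$.

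Define
\[
\mathcal{J}=\{f^{-1}[I]:I\in fin^{\beta}\}.
\]
Since $f$ is a bijection, $\mathcal{J}$ is closed under subsets ($B\subseteq f^{-1}[I]$ gives $f[B]\subseteq I$, so $B=f^{-1}[f[B]]\in\mathcal{J}$) and finite unions ($f^{-1}[I_{1}]\cup f^{-1}[I_{2}]=f^{-1}[I_{1}\cup I_{2}]$), hence $\mathcal{J}$ is an ideal on $Y$. The map $f$ is a bijection $Y\to X_{\beta}$ witnessing that $\mathcal{J}$ is isomorphic to $fin^{\beta}$ in the sense of the paragraph preceding Section~2 (with inverse $f^{-1}$ sending $fin^{\beta}$-sets to $\mathcal{J}$-sets by definition). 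The Katětov morphism property gives $\mathcal{J}\subseteq\mathcal{I}$ directly.

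The only delicate point is the appeal to the Barbarski--Filipów--Mrozek--Szuca theorem: without it, a generic Katětov morphism $f^{\prime}\colon Y\to X_{\beta}$ would only give an isomorphic copy of $fin^{\beta}$ on the quotient of $Y$ by the fibres of $f^{\prime}$, and one would have to argue separately that this transfers back to a subideal of $\mathcal{I}$ on $Y$. The bijectivity in the cited theorem is precisely what removes that obstacle and makes the argument a clean transport of structure.
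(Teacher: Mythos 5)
Your proof is correct and is essentially the paper's own argument: the corollary is derived there in exactly this way, chaining $fin^{\beta}\leq_K fin^{\alpha}\leq_K\mathcal{I}$ via the hierarchy result of \cite{StructuralKatetov} and then invoking the Barbarski--Filipów--Mrozek--Szuca theorem to replace the Katětov morphism by a bijection, whose preimage ideal $\mathcal{J}=\{f^{-1}[I]:I\in fin^{\beta}\}$ is the desired isomorphic copy inside $\mathcal{I}$. Your closing remark about why bijectivity is indispensable matches the intended role of that theorem, so nothing is missing.
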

The following is well-known, we prove it here for completeness:

\begin{lemma}\label{lemma uniforme}
Let $\alpha<\omega_{1}$ be a limit ordinal and $B$ be an unbounded subset of $\alpha$.
Define $\overline{B}=\bigcup_{\beta\in B}(\{\beta\}\times X_\beta)$. Then the ideals
$fin ^{\alpha}$ and $fin ^{\alpha}\upharpoonright\overline{B}$
are Kat\v{e}tov equivalent.  
\end{lemma}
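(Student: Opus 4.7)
The plan is to prove the two Kat\v{e}tov inequalities separately. First, for $fin^\alpha \leq_K fin^\alpha \upharpoonright \overline{B}$, I would apply Lemma \ref{Basic Katetov}(4) after noting that $\overline{B}$ is $fin^\alpha$-positive: unwinding the definitions one has $\overline{B}(\beta) = X_\beta \notin fin^\beta$ exactly when $\beta \in B$, so $\{\beta < \alpha : \overline{B}(\beta) \notin fin^\beta\} = B$ is unbounded in $\alpha$ by hypothesis, and hence $\overline{B} \notin fin^\alpha$.

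For the reverse inequality $fin^\alpha \upharpoonright \overline{B} \leq_K fin^\alpha$ I would build an explicit Kat\v{e}tov morphism $f\colon X_\alpha \to \overline{B}$. Using that $\alpha$ is a countable limit ordinal (hence of cofinality $\omega$) together with the unboundedness of $B$, I can extract a strictly increasing cofinal sequence $\langle \beta_n : n < \omega\rangle \subseteq B$. For each $\gamma$ with $\beta_0 \leq \gamma < \alpha$ I define $b(\gamma) = \beta_{n(\gamma)}$ where $n(\gamma) = \max\{k : \beta_k \leq \gamma\}$, so that $b(\gamma) \in B$, $b(\gamma) \leq \gamma$, and $b(\gamma) \to \alpha$ as $\gamma \to \alpha$ (meaning $\{\gamma : b(\gamma) < \delta\}$ is bounded in $\alpha$ for every $\delta < \alpha$). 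The monotonicity result of \cite{StructuralKatetov} recalled above yields $fin^{b(\gamma)} \leq_K fin^\gamma$, so I can fix a Kat\v{e}tov morphism $h_\gamma\colon X_\gamma \to X_{b(\gamma)}$ for every such $\gamma$ and set $f(\gamma, x) = (b(\gamma), h_\gamma(x))$ when $\gamma \geq \beta_0$, choosing $f$ arbitrarily landing in $\overline{B}$ on the bounded piece $\{\gamma < \beta_0\}$.

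To verify $f$ is a Kat\v{e}tov morphism, I would take $J \in fin^\alpha \upharpoonright \overline{B}$ and pick $\delta < \alpha$ bounding $\{\beta \in B : J(\beta) \notin fin^\beta\}$. For every $\gamma \geq \beta_0$ with $b(\gamma) \geq \delta$ one computes $f^{-1}[J](\gamma) = h_\gamma^{-1}[J(b(\gamma))]$, which lies in $fin^\gamma$ since $J(b(\gamma)) \in fin^{b(\gamma)}$ and $h_\gamma$ is a Kat\v{e}tov morphism; because $b(\gamma) \to \alpha$, only boundedly many $\gamma$ can fail this, giving $f^{-1}[J] \in fin^\alpha$. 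The main subtlety I expect is the simultaneous requirement that $b(\gamma) \leq \gamma$ (so that monotonicity produces $h_\gamma$) and that $b(\gamma) \to \alpha$ (so that the exceptional set of $\gamma$'s is bounded); the cofinal $\omega$-sequence trick is exactly what delivers both. Everything else is routine bookkeeping with the Fubini-style definition of $fin^\alpha$.
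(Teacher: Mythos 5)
Your proof is correct and follows essentially the same route as the paper: your map $b(\gamma)$ is exactly the paper's $\beta^{-}$ (the largest element of $B$ below $\gamma$), and the morphism $f(\gamma,x)=(b(\gamma),h_\gamma(x))$ built from monotonicity of the $fin^\beta$ hierarchy is precisely the paper's $h=\bigcup_\beta g_\beta$. The only cosmetic difference is that the paper first reduces, via Lemma \ref{Basic Katetov}, to the case where $B$ has order type $\omega$ and contains $0$, whereas you extract a cofinal $\omega$-sequence directly and absorb the bounded initial piece $\{\gamma<\beta_0\}$ into the $fin^\alpha$-small part by hand.
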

\begin{proof}
It is clear that $\overline{B}$ is not in $fin^{\alpha},$ so by Lemma
\ref{Basic Katetov}, we know that $fin^{\alpha}\upharpoonright
\overline{B}$ is Kat\v{e}tov above $fin ^{\alpha}.$ It remains to
prove that $fin^{\alpha}\upharpoonright\overline{B}$ $\ \leq
_{\text{\textsf{K}}} fin^{\alpha}.$ Once again by Lemma \ref{Basic Katetov}, we may assume that the order type of $B$ is $\omega$ and
that $0\in B.$

Given $\beta<\alpha,$ define $\beta^{-}$ as the largest element of $B$ such that
$\beta^{-}\leq\beta.$ Fix $g_{\beta}:X_{\beta}\longrightarrow X_{\beta^{-}}$ a
Kat\v{e}tov function from $(X_{\beta}$,$fin^{\beta})$ to
$(X_{\beta^{-}}$,$fin^{\beta^{-}}).$ We now define $h:X_{\alpha
}\longrightarrow\overline{B}$ where $h=\bigcup_{\beta<\alpha}
g_{\beta}.$ We claim that $h$ is a Kat\v{e}tov function from $(X_{\alpha}
,fin^{\alpha})$ to $(\overline{B},fin^{\alpha
}\upharpoonright\overline{B}).$

Let $A\subseteq\overline{B}$ with $A\in$ $fin^{\alpha}.$ We need to
prove that $h^{-1}\left(  A\right)  \in$ $fin^{\alpha}.$ We know that
there is $\gamma\in B$ such that if $\beta\geq\gamma,$ then $A_{\beta}\in$
$fin^{\alpha}$. Let $\beta<\alpha$ such that $\gamma<\beta$ (so
$\gamma\leq\beta^{-}$). We have the following:
$$h^{-1}\left(  A\right)  _{\beta}=g_{\beta}^{-1}\left(  A_{\beta^{-}}\right).
$$
Since $g_{\beta}$ is a Kat\v{e}tov function and $A_{\beta^{-}}\in fin^{\beta^{-}}$ it follows that $h^{-1}\left(  A\right)  _{\beta
}=g_{\beta}^{-1}\left(  A_{\beta^{-}}\right)  \in fin^{\beta}.$
From this we conclude that $h^{-1}\left(  A\right)  \in$ 
$fin^{\alpha}.$
\end{proof}

The following is an adaptation of the well-known notion of uniform barrier to
our context (see \cite{Ramseyspaces}):

\begin{mydef}
Let $T\in\mathbb{W}$, $s\in T$ and $\alpha<\omega_{1}.$

\begin{enumerate}
\item We say that $s$ is $0$\emph{-uniform in }$T$ if $s$ is a leaf.

\item We say that $s$ is $\left(  \alpha+1\right)  $\emph{-uniform in }$T$ if
every $t\in succ_{T}\left(  s\right)  $ is $\alpha$-uniform in $T.$

\item Let $\alpha$ be a limit ordinal. We say that $s$ is $\alpha
$\emph{-uniform in }$T$ if there is an enumeration $succ_{T}\left(  s\right)
=\left\{  t_{n}: n\in\omega\right\}  $ and an increasing sequence
$\left\langle \alpha_{n}\right\rangle _{n\in\omega}$ with limit $\alpha$ such
that each $t_{n}$ is $\alpha_{n}$-uniform in $T.$ \qquad\ \ \ \qquad\ \ 

\item We will say that $T$ is $\alpha$\emph{-uniform }if $rt(T)$ is $\alpha
$-uniform in $p.$
\end{enumerate}
\end{mydef}

Note that not every $s\in T$ must be $\alpha$-uniform for some $\alpha.$ In
the same way, not every $T$ is $\alpha$-uniform for some $\alpha.$ We will
simply say that $T$ is \emph{uniform }if it is $\alpha$-uniform for some
$\alpha<\omega_{1}.$\ We now have the following:

\begin{prop} \label{Katetovequivalent}
Let $T\in\mathbb{W}$ and $1\leq\alpha<\omega_{1}$ such that $T$ is $\alpha
$-uniform. The ideal $\mathcal{F}^{\ast}\left(  T\right)  $ and 
$fin^{\alpha}$ are Kat\v{e}tov equivalent. \label{prop equivalentes}
\end{prop}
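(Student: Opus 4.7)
The plan is to induct on $\alpha$, exploiting the parallel recursive structure of $\mathcal{F}^{\ast}(T)$ and of $fin^{\alpha}$. The crucial observation that drives the induction is the decomposition already recorded in the paper: if $succ_{T}(rt(T))=\{s_{n}:n\in\omega\}$ and $S^{n}=T_{s_{n}}\setminus\{rt(T)\}$, then $L(T)=\bigsqcup_{n\in\omega}L(S^{n})$, each $S^{n}\in\mathbb{W}$, and dualizing the equivalence stated just before Proposition \ref{dicotomia conjunto} gives that $X\in\mathcal{F}^{\ast}(T)$ iff $X\cap L(S^{n})\in\mathcal{F}^{\ast}(S^{n})$ for cofinitely many $n$. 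This mirrors the recursive definitions $fin^{\beta+1}=fin\times fin^{\beta}$ and $fin^{\alpha}=\lim_{\beta\to bnd(\alpha)}fin^{\beta}$ exactly. Moreover, unfolding the definition of $\alpha$-uniform shows that when $T$ is $\alpha$-uniform, each $S^{n}$ inherits a strictly smaller uniform rank, so induction is available.

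The base case $\alpha=1$ is trivial: a $1$-uniform tree has only leaves above the root, so $\mathcal{F}(T)$ is the cofinite filter on the countably infinite set $L(T)$, and $\mathcal{F}^{\ast}(T)$ is isomorphic, hence Kat\v{e}tov equivalent, to $fin^{1}$.

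For the successor step $\alpha=\beta+1$, the induction hypothesis applied to each $S^{n}$ (which is $\beta$-uniform) yields Kat\v{e}tov morphisms $\varphi_{n}\colon L(S^{n})\to X_{\beta}$ and $\psi_{n}\colon X_{\beta}\to L(S^{n})$ witnessing the two reductions between $\mathcal{F}^{\ast}(S^{n})$ and $fin^{\beta}$. I would glue these into $\Phi\colon L(T)\to X_{\alpha}=\omega\times X_{\beta}$ by setting $\Phi(x)=(n,\varphi_{n}(x))$ when $x\in L(S^{n})$, and $\Psi\colon X_{\alpha}\to L(T)$ by $\Psi(n,y)=\psi_{n}(y)$. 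A direct Fubini-style verification, using only the definition of $fin\times fin^{\beta}$ together with the decomposition of $\mathcal{F}^{\ast}(T)$ recalled above, shows that $\Phi$ and $\Psi$ are Kat\v{e}tov morphisms in the two required directions.

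For the limit step, I would enumerate $succ_{T}(rt(T))=\{t_{n}:n\in\omega\}$ with $t_{n}$ being $\alpha_{n}$-uniform and $\sup_{n}\alpha_{n}=\alpha$, so each corresponding $S^{n}$ is $\alpha_{n}$-uniform, and the hypothesis gives Kat\v{e}tov morphisms between $\mathcal{F}^{\ast}(S^{n})$ and $fin^{\alpha_{n}}$. Setting $B=\{\alpha_{n}:n\in\omega\}$ and $\overline{B}=\bigcup_{n}\{\alpha_{n}\}\times X_{\alpha_{n}}$, Lemma \ref{lemma uniforme} lets me substitute the Kat\v{e}tov equivalent ideal $fin^{\alpha}\upharpoonright\overline{B}$ for $fin^{\alpha}$; the restricted ideal is by construction the $fin$-limit along $n$ of the $fin^{\alpha_{n}}$ on the slices $\{\alpha_{n}\}\times X_{\alpha_{n}}$. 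The same gluing as in the successor case, now placing each $\varphi_{n}$-image into the $\{\alpha_{n}\}$-slice, produces the required reductions. The main technical point is precisely this reindexing: since the sequence $(\alpha_{n})$ need not exhaust $\alpha$, one cannot map directly into $X_{\alpha}$, so the combination of the inductive gluing with Lemma \ref{lemma uniforme} is what makes the limit step go through.
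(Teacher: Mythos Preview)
Your proposal is correct and follows essentially the same route as the paper: induction on $\alpha$, using the decomposition $X\in\mathcal{F}^{\ast}(T)$ iff $X\cap L(S^{n})\in\mathcal{F}^{\ast}(S^{n})$ for cofinitely many $n$, and invoking Lemma \ref{lemma uniforme} at the limit step to pass from $fin^{\alpha}\upharpoonright\overline{B}$ to $fin^{\alpha}$. The only difference is presentational: you spell out the gluing maps $\Phi$ and $\Psi$ explicitly, whereas the paper simply observes that the recursive descriptions of $\mathcal{F}^{\ast}(T)$ and of $fin^{\alpha}$ match and concludes.
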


\begin{proof}
We proceed by induction on $\alpha.$ It is easy to see that if $T$ is
$1$-uniform, then $\mathcal{F}^{\ast}\left(  T\right)  $ is not tall, so it is
Kat\v{e}tov equivalent to $fin^{1}=\left[  \omega\right]  ^{<\omega
}.$

Assume the Proposition is true for $\alpha,$ we will prove it is true for
$\alpha+1$ as well. Let $T$ be an $\left(  \alpha+1\right)  $-uniform tree and
$succ_{T}\left(  \omega\right)  =\left\{  s_{n}: n\in\omega\right\}  ,$ we
know that each $s_{n}$ is $\alpha$-uniform in $T.$ For every $n\in\omega,$ let
$T^{n}=T_{s_{n}}.$ It follows by the inductive hypothesis that $\mathcal{F}%
^{\ast}\left(  T_{n}\right)  $ and $fin^{\alpha}$ are Kat\v{e}tov
equivalent. Recall that if $A\subseteq L\left(  T\right)  ,$ then
$A\in\mathcal{F}\left(  T\right)  $ if and only if $A\in\mathcal{F}\left(
T^{n}\right)  $ for almost all $n\in\omega.$ In this way, if $A\subseteq
L\left(  T\right)  ,$ we get the following:

\qquad\ \ \ \qquad\ \ 

\hfill%
\begin{tabular}
[c]{lll}%
$A\in\mathcal{F}^{\ast}\left(  T\right)  $ & if and only if & $L\left(
T\right)  \setminus A\in\mathcal{F}\left(  T\right)  $\\
& if and only if & $\forall^{fin^*}n\in\omega\left(  L\left(  T\right)
\setminus A\in\mathcal{F}\left(  T^{n}\right)  \right)  $\\
& if and only if & $\forall^{fin^*}n\in\omega\left(  A\in\mathcal{F}^{\ast
}\left(  T^{n}\right)  \right)  .$%
\end{tabular}
\qquad\ \hfill

\qquad\ \ \ \ \ 

It follows from the inductive hypothesis that $\mathcal{F}^{\ast}\left(
T\right)  $ is Kat\v{e}tov equivalent to $fin^{\alpha+1}.$

Finally, let $\alpha$ be a limit ordinal and assume that the Proposition is true
for every $\beta<\alpha.$ Let $T$ be an $\alpha$-uniform tree. Let $\{  s_{n}: n\in
\omega\}$ be an
enummeration of $succ_{T}(\omega)$ and  $\left\langle \alpha
_{n}\right\rangle _{n\in\omega}$  be an increasing sequence with limit $\alpha$ such that each $s_{n}$ is
$\alpha_{n}$-uniform in $T.$ Define $T^{n}=T_{s_{n}},$ by the inductive
hypothesis we know that $\mathcal{F}^{\ast}\left(  T_{n}\right)  $ and
$fin^{\alpha_{n}}$ are Kat\v{e}tov equivalent. Let $B=\left\{
\alpha_{n}: n\in\omega\right\}  $ and $\overline{B}=%
%TCIMACRO{\dbigcup \limits_{\beta\in B}}%
%BeginExpansion
{\displaystyle\bigcup\limits_{\beta\in B}}
%EndExpansion
\left(  \left\{  \beta\right\}  \times X_{\beta}\right)  $ (where $X_{\beta}$
was defined in Definition \ref{fin alpha}). Recall that if $A\subseteq
L\left(  T\right)  ,$ then $A\in\mathcal{F}\left(  p\right)  $ if and only if
$A\in\mathcal{F}\left(  T_{n}\right)  $ for almost all $n\in\omega.$ In this
way, if $A\subseteq L\left(  T\right)  ,$ we get the following:

\qquad\ \ \ \qquad\ \ 

\hfill%
\begin{tabular}
[c]{lll}%
$A\in\mathcal{F}^{\ast}\left(  T\right)  $ & if and only if & $L\left(
T\right)  \setminus A\in\mathcal{F}\left(  T\right)  $\\
& if and only if & $\forall^{fin^*}n\in\omega\left(  L\left(  T\right)
\setminus A\in\mathcal{F}\left(  T^{n}\right)  \right)  $\\
& if and only if & $\forall^{fin^*}n\in\omega\left(  A\in\mathcal{F}^{\ast
}\left(  T^{n}\right)  \right)  .$%
\end{tabular}
\qquad\ \hfill

\qquad\ \ \ \ \ 

It follows from the inductive hypothesis that $\mathcal{F}^{\ast}\left(
T\right)  $ is Kat\v{e}tov equivalent to $fin^{\alpha}\upharpoonright
\overline{B}.$ By Lemma \ref{lemma uniforme}, we conclude that $\mathcal{F}%
^{\ast}\left(  T\right)$ and $fin^{\alpha}$ are Kat\v{e}tov equivalent.
\end{proof}

\begin{mydef}
Given $(T,\leq_T)\in \mathbb{W}$, we define recursively, for every $t\in T,$ the rank function as $$\rho_{(T,\leq_T)}(t)=\sup\{\rho_{(T,\leq_T)}(s)+1: t<_T s\}.$$
\end{mydef}
 
It is well known that for every $T\in\mathbb{W}$ the range of $\rho_T$ is an ordinal. Furthermore, $t<_T s$ implies $\rho(s)<\rho(t)$.

As we said before, not every $T$ is uniform, however, it is always possible to prune $T$ to a uniform Laver subtree. 

\begin{prop}
Let $T\in\mathbb{W}.$ There is $S\in\mathbb{L}\left(  T\right)  $ that is
uniform. \label{prop uniformes densos}
\end{prop}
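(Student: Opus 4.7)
The plan is to argue by transfinite induction on the rank $\alpha=\rho_T(rt(T))$, which is a countable ordinal since $T$ is well-founded and $\omega$-branching. The base case $\alpha=0$ is immediate: then $rt(T)$ is a leaf, $T=\{rt(T)\}$, and $T$ itself is $0$-uniform.

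For the inductive step, enumerate $succ_T(rt(T))=\{t_n:n\in\omega\}$. Each $T_{t_n}$ lies in $\mathbb{W}$ and has strictly smaller root rank, so the inductive hypothesis produces a $\beta_n$-uniform Laver subtree $S_n\in\mathbb{L}(T_{t_n})$. A short induction on the definition of uniformity shows that a $\beta$-uniform tree has root rank exactly $\beta$, so $\beta_n<\alpha$ for every $n$. I would then look for an infinite subset $\{n_k:k\in\omega\}\subseteq\omega$ and set
\[
S=\{rt(T)\}\cup\bigcup_{k\in\omega}S_{n_k}.
\]
The verification that $S\in\mathbb{L}(T)$ is routine: each $S_{n_k}$ is $\omega$-branching and well-founded, $rt(T)$ has $\omega$ many immediate successors in $S$, and $L(S)\subseteq\bigcup_k L(T_{t_{n_k}})\subseteq L(T)$.

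The essential combinatorial step is the choice of the subsequence, and I would split into two cases according to the behaviour of $\langle\beta_n\rangle_{n\in\omega}$. If this sequence is bounded in $\alpha$, the pigeonhole principle yields an ordinal $\beta<\alpha$ with $\beta_n=\beta$ for infinitely many $n$; selecting those indices as the $n_k$ makes $S$ a $(\beta+1)$-uniform tree by clause (2) of the definition. Otherwise $\langle\beta_n\rangle$ is unbounded in $\alpha$, which forces $\alpha$ to be a limit ordinal; extracting a strictly increasing subsequence $\beta_{n_k}\nearrow\alpha$ then makes $S$ exactly $\alpha$-uniform via the limit clause.

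The point that requires the most care is the limit case: one must exhibit an enumeration of $succ_S(rt(T))$ matching an increasing witness sequence converging to $\alpha$, as demanded by clause (3). This is precisely what the unbounded-subsequence extraction provides, so the two cases combined supply a uniform Laver subtree in every scenario. Beyond that split, everything reduces to bookkeeping with the inductive hypothesis.
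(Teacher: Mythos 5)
Your overall strategy coincides with the paper's: induct on $\rho_T(rt(T))$, apply the inductive hypothesis to the cones $T_{t_n}$ over the root's successors, and glue an infinite subfamily of the resulting uniform Laver subtrees. However, your case analysis contains a genuine error. You split on whether $\langle\beta_n\rangle_{n\in\omega}$ is bounded in $\alpha$, and in the bounded case you invoke the pigeonhole principle to get a constant infinite subsequence. This step fails: a bounded sequence of ordinals can take infinitely many distinct values (for instance $\beta_n=n$ with $\alpha>\omega$, which really can occur here, since a Laver subtree may have much smaller rank than the tree it sits in --- the inductive hypothesis gives you no control relating $\beta_n$ to $\rho(T_{t_n})$ beyond $\beta_n\leq\rho(T_{t_n})$). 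In that situation there is no constant infinite subsequence, and your bounded case produces nothing, while your unbounded case does not apply either, so the proof as written has a hole exactly there.

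The repair is the dichotomy the paper actually uses, which is intrinsic to the sequence rather than tied to $\alpha$: since the ordinals are well-ordered, every infinite sequence $\langle\beta_n\rangle$ admits either an infinite constant subsequence or an infinite strictly increasing subsequence. In the first case you get a $(\beta+1)$-uniform Laver subtree as you describe; in the second, the extracted subsequence increases to \emph{some} limit ordinal $\gamma\leq\alpha$ (not necessarily $\alpha$ itself), and the glued tree is $\gamma$-uniform. This suffices because the proposition only asks for a uniform Laver subtree, i.e.\ $\beta$-uniform for \emph{some} $\beta<\omega_1$; your insistence on landing exactly on $(\beta+1)$ below $\alpha$ or on $\alpha$ itself is what pushed you into the flawed bounded/unbounded split. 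Your side remark that a $\beta$-uniform tree has root rank exactly $\beta$ is correct but unnecessary once the dichotomy is stated intrinsically, as the paper does.
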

\begin{proof}
By induction on $\rho_T(rt(T))$. If $\rho(rt(t))=0$ there is nothing to do. So, assume that $\rho_T(rt(T))=\alpha>0$. Let $\{s_n:n\in\omega\}$ be an enumeration of $succ_T(rt(T))$ and apply the inductive hypothesis to each $T_{s_n}$ to get $S^n\in\mathbb{L}(T_{s_n})$ that is $\beta_n$-uniform. If there exists $X\in [\omega]^\omega$ such that $\beta_n=\beta_m$ for every $n,m\in X$, then $S=\bigcup_{n\in X} S^n$ is a Laver subtree of $T$ that is $\beta+1$-uniform. In other case, it is possible to find $X\in [\omega]^\omega$ such that $\langle\beta_n\rangle_{n\in X}$ is an increasing sequence with limit $\gamma<\omega_1$. Thus, $S=\bigcup_{n\in X} S^n$ is a Laver subtree of $T$ that is $\gamma$-uniform.
\end{proof}

\begin{cor}
Let $T\in\mathbb{W}.$ There is $\alpha<\omega_{1}$ and $\mathcal{I}$ an ideal
Kat\v{e}tov equivalent to $fin^{\alpha}$ such that $\mathcal{F}%
^{\ast}\left(  T\right)  \subseteq\mathcal{I}.$
\end{cor}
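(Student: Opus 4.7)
The plan is to use Proposition~\ref{prop uniformes densos} to reduce to the uniform case, then lift the Kat\v{e}tov equivalence from Proposition~\ref{Katetovequivalent} back to $T$ by a restriction argument. First, I would apply Proposition~\ref{prop uniformes densos} to obtain $S\in\mathbb{L}(T)$ that is $\alpha$-uniform for some $\alpha<\omega_1$. By Proposition~\ref{Katetovequivalent}, $\mathcal{F}^{\ast}(S)$ is Kat\v{e}tov equivalent to $fin^{\alpha}$, so the task becomes producing an ideal $\mathcal{I}$ on $L(T)$ of the same Kat\v{e}tov class which contains $\mathcal{F}^{\ast}(T)$.

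The natural candidate is
$$\mathcal{I}=\{A\subseteq L(T): A\cap L(S)\in\mathcal{F}^{\ast}(S)\}.$$
Closure of $\mathcal{I}$ under subsets and finite unions follows routinely from the analogous properties of $\mathcal{F}^{\ast}(S)$. Moreover $L(T)\setminus L(S)\in\mathcal{I}$ (trivially, as its intersection with $L(S)$ is empty), so $L(S)\in\mathcal{I}^{\ast}$, and by construction $\mathcal{I}\upharpoonright L(S)=\mathcal{F}^{\ast}(S)$. Lemma~\ref{Basic Katetov}(5) then yields $\mathcal{I}\cong_K\mathcal{F}^{\ast}(S)\cong_K fin^{\alpha}$.

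The real content of the proof is the inclusion $\mathcal{F}^{\ast}(T)\subseteq\mathcal{I}$. Given $A\in\mathcal{F}^{\ast}(T)$, fix $R\in\mathbb{H}(T)$ with $L(R)\cap A=\emptyset$ and consider $R':=R\cap S$. The key observation is that $R'\in\mathbb{H}(S)$: for any non-maximal $t\in R'$ we have $succ_S(t)\setminus succ_{R'}(t)\subseteq succ_T(t)\setminus succ_R(t)$, which is finite by the Hechler property of $R$ in $T$. This cofiniteness forces $succ_{R'}(t)$ to be infinite, hence $R'\in\mathbb{W}$; it also shows that any leaf of $R'$ must already be a leaf of $T$, giving $L(R')\subseteq L(R)\cap L(S)$. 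Therefore $L(R')\cap A=\emptyset$, so $L(S)\setminus A\in\mathcal{F}(S)$, i.e., $A\cap L(S)\in\mathcal{F}^{\ast}(S)$, i.e., $A\in\mathcal{I}$.

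The main obstacle is precisely this verification that intersecting a Hechler subtree of $T$ with a Laver subtree of $T$ yields a Hechler subtree of the latter and does not create spurious leaves; everything else reduces to direct application of Propositions~\ref{prop uniformes densos} and~\ref{Katetovequivalent} together with Lemma~\ref{Basic Katetov}.
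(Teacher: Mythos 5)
Your proof is correct and takes essentially the same route as the paper: prune $T$ to an $\alpha$-uniform $S\in\mathbb{L}(T)$ via Proposition~\ref{prop uniformes densos}, apply Proposition~\ref{prop equivalentes} to $\mathcal{F}^{\ast}(S)$, and transfer back to $L(T)$. The paper compresses your middle and final steps into the bare assertion that $\mathcal{F}(T)\subseteq\mathcal{F}(S)$; your verification that $R\cap S\in\mathbb{H}(S)$ for $R\in\mathbb{H}(T)$ (with no spurious leaves), together with the extension of $\mathcal{F}^{\ast}(S)$ to an ideal on $L(T)$ via Lemma~\ref{Basic Katetov}(5), is exactly the content left implicit there.
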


\begin{proof}
By Proposition \ref{prop uniformes densos}, we know that there are
$\alpha<\omega_{1}$\ and\ $S\in\mathbb{L}\left(  T\right)  $ such that $S$
is $\alpha$-uniform. Since $\mathcal{F}\left(  T\right)  $ $\mathcal{\subseteq
F}\left(  S\right)  ,$ the result follows by Proposition
\ref{prop equivalentes}.
\end{proof}

\section{Forcing an Ultrafilter with Trees}

Recall that for each set $x$, the \emph{transitive closure} of $x$ (denoted by $trcl(x)$) is the $\subseteq$-minimal transitive set containing $x$. For any infinite cardinal $\kappa$, define $\textsf{H}(\kappa)=\{x:|trcl(x)|<\kappa\}$. The following is well-known.

\begin{theorem}[see \cite{oldKunen}]
If $\kappa>\omega$ is regular, then  $\textsf{H}(\kappa)$ is a model of $ZFC-P$ ($ZFC$ except Power Set axiom).
\end{theorem}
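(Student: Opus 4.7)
The plan is to verify each axiom of ZFC $-$ P in turn, using transitivity of $\textsf{H}(\kappa)$ as the main structural fact and the regularity of $\kappa$ at the key step (Replacement). First I would establish that $\textsf{H}(\kappa)$ is transitive: if $x\in \textsf{H}(\kappa)$ and $y\in x$, then $trcl(y)\subseteq trcl(x)$, so $|trcl(y)|<\kappa$ and $y\in \textsf{H}(\kappa)$. Transitivity immediately gives Extensionality and Foundation by absoluteness of $\Delta_0$ formulas in transitive classes.

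For the remaining ``easy'' axioms I would argue by direct cardinality bookkeeping on transitive closures. Pairing: if $x,y\in \textsf{H}(\kappa)$ then $trcl(\{x,y\})=trcl(x)\cup trcl(y)\cup\{x,y\}$ has size $<\kappa$ because $\kappa$ is uncountable regular. Union: $trcl(\bigcup x)\subseteq trcl(x)$, so $\bigcup x\in \textsf{H}(\kappa)$. Empty Set and Infinity: $\emptyset,\omega\in \textsf{H}(\kappa)$ trivially since $|trcl(\omega)|=\omega<\kappa$. Separation: for $x\in \textsf{H}(\kappa)$ and any formula $\varphi$, the set $\{y\in x:\varphi^{\textsf{H}(\kappa)}(y)\}$ is a subset of $x$, hence lives inside $trcl(x)\cup\{\text{that set}\}$, still of cardinality $<\kappa$; absoluteness of bounded quantifiers ensures the definable subset computed in $\textsf{H}(\kappa)$ is an element of $\textsf{H}(\kappa)$. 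Choice: a choice function $f$ on $x\in \textsf{H}(\kappa)$ has $\mathrm{dom}(f)=x$ and $\mathrm{ran}(f)\subseteq\bigcup x$, both in $\textsf{H}(\kappa)$, so $trcl(f)$ sits inside a finite union of sets each of size $<\kappa$.

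The main obstacle, and the only place regularity of $\kappa$ is essential beyond uncountability, is Replacement. Given $x\in \textsf{H}(\kappa)$ and a class function $F$ defined by a formula with parameters in $\textsf{H}(\kappa)$ such that $F(y)\in \textsf{H}(\kappa)$ for each $y\in x$, I would verify that $F[x]\in \textsf{H}(\kappa)$. First $|F[x]|\leq|x|\leq|trcl(x)|<\kappa$. Next,
\[
trcl(F[x])\;\subseteq\; F[x]\cup\bigcup_{y\in x}trcl(F(y)),
\]
which is a union of fewer than $\kappa$ many sets each of cardinality less than $\kappa$. Regularity of $\kappa$ is precisely what guarantees this union still has size $<\kappa$, so $F[x]\in \textsf{H}(\kappa)$. (Without regularity, for instance at $\kappa=\aleph_\omega$, the argument would fail — which is why the hypothesis cannot be dropped.) With Replacement done, the list of axioms of $ZFC-P$ is complete, and the theorem follows.
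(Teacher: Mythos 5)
The paper gives no proof of this statement at all—it is quoted as a well-known fact with a citation to Kunen—and your argument is precisely the standard proof found there: transitivity of $\textsf{H}(\kappa)$ plus cardinality bookkeeping on transitive closures for the easy axioms, with regularity of $\kappa$ invoked exactly once, at Replacement, where $trcl(F[x])$ is covered by fewer than $\kappa$ sets each of size less than $\kappa$. Your proof is correct (a minor quibble: regularity is not actually needed for Pairing, only infinite cardinal arithmetic is, as your own Replacement discussion and the $\aleph_\omega$ counterexample make clear), so it matches the intended argument.
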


We now have the following definition.
\begin{mydef}
Define $\mathbb{P}$ as the set of all trees $p\in\mathbb{W}$ such that:

\begin{enumerate}
\item $p\in$ \textsf{H}$\left(  \omega_{1}\right)  $(or any other rich enough set).

\item $L\left(  p\right)  \subseteq\omega$ and if a natural number is in $p,$
then it is a leaf.

\item The root of $p$ is $\omega.$
\end{enumerate}
\end{mydef}

Note that if $p\in\mathbb{P},$ we can view $\mathcal{F}\left(  p\right)  $ as a filter
on $\omega.$

\begin{mydef}\label{definicion de forcing para ultrafiltro Shelah}
Let $p,q\in\mathbb{P}.$ Define $p\leq q$ if there are $p^H\in\mathbb{H}(p)$ and $q^L\in\mathbb{L}(q)$ such that:

\begin{enumerate}

\item $succ_{q^L}(\omega)$ is a barrier in
$p^H$.

\item If $s\in succ_{q^L}(\omega)$ then $[s]_{p^H}=[s]_{q^L}$.

\item If $s\in succ_{q^L}(\omega)$ and $t,t'\in [s]_{p^H}$, then $t\leq_{p^H} t'$ if and only if $t\leq_{q^L} t'$.
\end{enumerate}
\end{mydef}

In the situation above, we say that $(p^H, q^L)$ is a
witness of $p\leq q.$ Note that if $(p^H, q^L)$ is a
witness of $p\leq q$, then $q^L\subseteq p^H$; moreover, if $t\in p^H\backslash q^L$, then there is $s\in succ_{q^L}(\omega)$ such that $t<_p s$. 

Let $q\in\mathbb{P}$ and $q^H\in\mathbb{H}(q)$, then it is clear that $(q^H,q^H )$ is a witness of $q\leq q$ and therefore the relation $\leq$ is reflexive. We will show now that $\leq$ is transitive.

\begin{lemma}
Let $p,q,r\in\mathbb{P}.$ If $p\leq q\leq r,$ then $p\leq r.$
\end{lemma}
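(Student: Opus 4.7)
Let $(p^H,q^L)$ witness $p\leq q$ and $(q^H,r^L)$ witness $q\leq r$, and write $B_1=succ_{q^L}(\omega)$ and $B_2=succ_{r^L}(\omega)$. My plan is to align the two witnesses by shrinking the middle Laver tree $q^L$ to $q':=q^L\cap q^H$, which belongs to $\mathbb{L}(q)$ by Lemma~\ref{subarboles}(3), and then extract from $B_2$ a barrier living inside $q'$ that will simultaneously yield a Laver subtree of $r$ and a barrier inside a Hechler refinement of $p^H$.

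\textbf{Step 1 (aligned witness for $p\leq q$).} I first produce $p^{H'}\in\mathbb{H}(p)$ such that $(p^{H'},q')$ still witnesses $p\leq q$, with new barrier $B_1':=succ_{q'}(\omega)\subseteq B_1$. The key quantitative point is that for every $t\in q'$,
$$succ_{q^L}(t)\setminus succ_{q'}(t)\;\subseteq\;succ_q(t)\setminus succ_{q^H}(t),$$
which is finite since $q^H\in\mathbb{H}(q)$; in particular $B_1\setminus B_1'$ is finite. I define $p^{H'}$ by first removing from $p^H$ the finitely many cones $[s]_{p^H}$ for $s\in B_1\setminus B_1'$, and then, above each $s\in B_1'$, replacing the cone $[s]_{p^H}=[s]_{q^L}$ by its further restriction $[s]_{q'}$. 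Both operations prune only finitely many immediate successors at each node, so $p^{H'}\in\mathbb{H}(p)$; the three clauses of the definition of $\leq$ then follow directly from the construction.

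\textbf{Step 2 (the new barrier and $r^{\star}$).} Next I observe that $q'\in\mathbb{L}(q^H)$: indeed $q'\subseteq q^H$, $L(q')\subseteq L(q)\cap q^H=L(q^H)$, and every maximal chain in $q'$ is maximal in $q^H$. Hence the barrier $B_2$ of $q^H$ meets $q'$ in a set $B_3:=B_2\cap q'$ that is a barrier in $q'$, and since $p^{H'}$ agrees with $q'$ above $B_1'$, it is also a barrier in $p^{H'}$. Now set
$$r^{\star}:=\{\omega\}\cup\bigcup_{s\in B_3}[s]_{q'}.$$
For $s\in B_3\subseteq B_2$ one has $[s]_{q'}\subseteq[s]_{q^H}=[s]_{r^L}$ with orders agreeing, by the witness $(q^H,r^L)$ of $q\leq r$; a direct check then shows that $r^{\star}$ is a subtree of $r$, well-founded, $\omega$-branching at every non-leaf, and has $L(r^{\star})\subseteq L(r)$. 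Thus $r^{\star}\in\mathbb{L}(r)$.

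\textbf{Verification and main difficulty.} The pair $(p^{H'},r^{\star})$ witnesses $p\leq r$: the set $succ_{r^{\star}}(\omega)=B_3$ is a barrier in $p^{H'}$ by Step~2; for each $s\in B_3$ one has $[s]_{p^{H'}}=[s]_{q'}=[s]_{r^{\star}}$; and both $\leq_{p^{H'}}$ and $\leq_{r^{\star}}$ restrict on $[s]_{q'}$ to the order inherited from $q$ via the two original witnesses, and therefore agree with each other. I expect the main difficulty to be Step~1: checking that the simultaneous refinement keeps $p^{H'}$ inside $\mathbb{H}(p)$. This rests on the estimate displayed above, which is precisely the numerical content of intersecting a Laver and a Hechler subtree of $q$ and translates into losing only finitely many immediate successors at each node of $p^H$ during the pruning.
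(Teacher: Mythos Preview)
Your proof is correct and follows essentially the same approach as the paper: both arguments set $q'=\overline{q}=q^L\cap q^H$, and your objects $p^{H'}$ and $r^{\star}$ coincide with the paper's $\overline{p}$ (the downward closure in $p$ of $\bigcup\{[s]_{\overline{q}}:s\in succ_{\overline{q}}(\omega)\}$) and $\overline{r}=\bigcup\{r^L_s\cap q^L_s:s\in succ_{r^L}(\omega)\cap\overline{q}\}$ respectively. Your presentation is somewhat more explicit in describing $p^{H'}$ as a two-stage pruning and in verifying the barrier property of $B_3$ in $p^{H'}$, but the underlying construction and witness pair are the same.
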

\begin{proof}
Let $(p^H,q^L)$ be a witness of $p\leq q$ and $(q^H, r^L)$ be a witness of $q\leq r$. Define $\overline{q}=q^L\cap q^H$. It is clear that $\overline{q}$ is a Laver subtree of $q$; moreover it is a Hechler subtree of $q^L$. Define $\overline{p}$ as the downward closure in $p$ of the set $\bigcup\{[s]_{\overline{q}}: s\in succ_{\overline{q}}(\omega)\}$. On the other hand, define $\overline{r}=\bigcup\{r^L_s\cap \ q^L_s:s\in succ_{r^L}(\omega)\cap\overline{q}\}$. It is easy to see that $\overline{p}$ is a Hechler subtree of $p$ and $\overline{r}$ is a  Laver subtree of $r$. Moreover, ($\overline{p},\overline{q}$) and ($\overline{q},\overline{r}$) witnesses $\overline{p}\leq\overline{q}$ and $\overline{q}\leq\overline{r}$ respectively. The following claim finishes the proof.
\begin{claim}
    $(\overline{p},\overline{r})$ is a witness of $p\leq r$.
\end{claim}
\begin{proof}
We will see first that $succ_{\overline{r}}(\omega)$ is an antichain in $\overline{p}$. Suppose there are $s_0,s_1\in succ_{\overline{r}}(\omega)$ 
that are compatible in $\overline{p}$. Without loss of generality, $s_0<_{\overline{p}} s_1$. Since $succ_{\overline{r}}(\omega)$ is a barrier in $\overline{q}$, there must be $t\in succ_{\overline{q}}(\omega)$ such that $t\leq_{\overline{q}} s_0$. Since ($\overline{p},\overline{q}$) is a witness of $\overline{p}\leq\overline{q}$, then $[t]_{\overline{p}}=[t]_{\overline{q}}$. Thus, we have that $t\leq_{\overline{q}}s_0 <_{\overline{q}} s_1$, which is a contradiction. Now let $X$ be a branch through $\overline{p}$, then $X'= X\cap \overline{q}$ is a branch through $\overline{q}$. Thus, since  $succ_{\overline{r}}(\omega)$ is a barrier in $\overline{q}$, there must be $s\in succ_{\overline{r}}(\omega)$ such that $s\in X'\subseteq X$ and therefore we have that $succ_{\overline{r}}(\omega)$ is a barrier in $\overline{p}$.

Let $s\in succ_{\overline{r}}(\omega)$. Observe that $$[s]_{\overline{r}}=[s]_{r^L}\cap [s]_{q^L}=[s]_{q^H}\cap[s]_{q^L}=[s]_{\overline{q}}.$$ On the other hand, it follows directly from condition (3) of Definition \ref{definicion de forcing para ultrafiltro Shelah} that $$[s]_{\overline{p}}=[s]_{\overline{q}}.$$ Thus, $[s]_{\overline{p}}=[s]_{\overline{q}}$ for every $s\in succ_{\overline{r}}(\omega)$.

To finish the proof let $s\in succ_{\overline{r}}(\omega)$ and fix $t,t'\in[s]_{\overline{p}}$, then we have that $t\leq_{\overline{p}} t'$ if and only if $t\leq_{\overline{q}} t'$ if and only if $t\leq_{\overline{r}} t'$.
\end{proof}
\end{proof}

By Proposition \ref{prop uniformes densos}, we get the following:

\begin{cor}\label{densosuniformes}
If $q\in\mathbb{P}$, then there is $p\in\mathbb{P}$ uniform such that $p\leq
q.$
\end{cor}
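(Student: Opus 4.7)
The plan is to apply Proposition \ref{prop uniformes densos} directly to $q$ and argue that the resulting uniform Laver subtree already plays the role of $p$.

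Starting from $q\in\mathbb{P}$, I invoke Proposition \ref{prop uniformes densos} to obtain a uniform tree $S\in\mathbb{L}(q)$, and I set $p:=S$. The first task is to check that $p$ actually belongs to $\mathbb{P}$. Since $S$ is a subtree of $q$ it shares the root of $q$, so $rt(p)=\omega$; being a Laver subtree gives $L(p)=L(S)\subseteq L(q)\subseteq\omega$; and if a natural number $n$ lies in $S$, then $n$ is already a leaf of $q$ by the definition of $\mathbb{P}$ applied to $q$, which forces $n$ to be a leaf of $S$ as well because $S$ inherits initial segments from $q$. Membership in $\textsf{H}(\omega_1)$ is automatic since $S\subseteq q\in\textsf{H}(\omega_1)$.

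Next I exhibit a witness for $p\leq q$. The natural choice is the pair $(p^H,q^L):=(S,S)$. We have $S\in\mathbb{H}(S)=\mathbb{H}(p)$ by Lemma \ref{subarboles}(1), and $S\in\mathbb{L}(q)$ by construction. Conditions (1)--(3) of Definition \ref{definicion de forcing para ultrafiltro Shelah} then all reduce to tautologies inside $S$: the set $succ_S(\omega)$ is an antichain that every branch of $S$ meets, so it is a barrier in $p^H=S$; for each such $s$ the cones $[s]_{p^H}$ and $[s]_{q^L}$ coincide as subsets of $S$; and the orders on these cones agree because both are simply the restriction of the order of $S$.

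The main substance of the argument is entirely contained in Proposition \ref{prop uniformes densos}; the rest is bookkeeping, and I do not anticipate any obstacle. In particular, the usual difficulty in showing $p\leq q$ in forcing posets of this flavour (finding a Hechler subtree of $p$ whose successors form a Laver-style barrier reorganising $q$) disappears here because $p$ is produced as a Laver subtree of $q$ to begin with, so the identity pair $(S,S)$ works as witness.
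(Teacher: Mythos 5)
Your proof is correct and takes exactly the paper's route: the paper derives this corollary by citing Proposition \ref{prop uniformes densos} alone, leaving implicit the bookkeeping that you spell out (that the uniform $S\in\mathbb{L}(q)$ lies in $\mathbb{P}$ and that $(S,S)$ witnesses $S\leq q$, just as in the paper's reflexivity remark after Definition \ref{definicion de forcing para ultrafiltro Shelah}). Your verification of those details is accurate, so there is nothing to fix.
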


Thus, the set of uniform conditions forms a dense subset of $\mathbb{P}$. For some arguments it will be more convenient to restrict oneself to the set of uniform conditions.

\begin{lemma}\label{alphauniform}
Let $\alpha<\omega_{1}$ and $p\in\mathbb{P}$ such that $p$ is $\alpha
$-uniform. For every $\beta\in\omega_{1}$ such that $\beta\geq\alpha,$ there
is $q\leq p$ that is $\beta$-uniform.
\end{lemma}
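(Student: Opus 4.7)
The plan is transfinite induction on $\beta$, with $\alpha\leq\beta$ arbitrary. The base case $\beta=\alpha$ is trivial: take $q=p$, witnessed by $(p,p)$.

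The central building block, used in both inductive steps, is a construction that inserts a new level of nodes between the root of a given condition and its immediate successors. Given any $\alpha'$-uniform condition $p'$ and a partition $succ_{p'}(\omega)=\bigcup_{n\in\omega}A_n$ into pairwise-disjoint infinite pieces such that each Laver subtree $p'_n=\{\omega\}\cup\bigcup_{s\in A_n}[s]_{p'}$ of $p'$ is still $\alpha'$-uniform, pick new nodes $\{v_n:n\in\omega\}\subseteq\textsf{H}(\omega_1)$ (outside $p'$ and not natural numbers) and let $q$ have root $\omega$, immediate successors $\{v_n\}$, and above each $v_n$ the tree obtained from $p'_n$ by identifying its root with $v_n$. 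Then $(q,p')$ witnesses $q\leq p'$: the set $succ_{p'}(\omega)$ appears as a barrier at depth $2$ in $q$, and the cones above its elements agree in $q$ and in $p'$. Each $v_n$ is $\alpha'$-uniform in $q$ (its subtree above is, order-theoretically, $p'_n$ rerooted), so $q$ is $(\alpha'+1)$-uniform. When $\alpha'$ is a successor, any partition of $succ_{p'}(\omega)$ into infinite pieces yields $\alpha'$-uniform $p'_n$'s; when $\alpha'$ is a limit, a diagonal partition is required so that each $A_n$ has ranks cofinal in $\alpha'$.

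For the successor case $\beta=\beta'+1>\alpha$, the inductive hypothesis yields $p'\leq p$ with $p'$ being $\beta'$-uniform; applying the construction above to $p'$ produces $q\leq p'$ that is $\beta$-uniform, and transitivity of $\leq$ (proved above) gives $q\leq p$. For the limit case $\beta>\alpha$ with $\beta$ a limit, pick an increasing sequence $\beta_k\to\beta$ with $\beta_k\geq\alpha$, partition $succ_p(\omega)=\bigcup_k A_k$ so that each $p_k=\{\omega\}\cup\bigcup_{s\in A_k}[s]_p$ is $\alpha$-uniform, and apply the inductive hypothesis to each $(p_k,\beta_k)$ to obtain $q_k\leq p_k$ that is $\beta_k$-uniform, witnessed by some $(q_k^H,p_k^L)$. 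Combine these into $q$ by taking root $\omega$, immediate successors $\{v_k:k\in\omega\}$ (new nodes), and attaching a copy of $q_k$ above each $v_k$. Then $q$ is $\beta$-uniform (the $v_k$'s are $\beta_k$-uniform with $\beta_k\to\beta$), and $q\leq p$ is witnessed by $(q^H,p^L)$, where $q^H$ assembles the $q_k^H$'s above the $v_k$'s and $p^L=\bigcup_k p_k^L$.

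The main technical obstacle is the partition step in the limit cases: one must distribute the elements of $succ_p(\omega)$ (whose ranks are cofinal in $\alpha$) among the blocks $A_k$ so that each block still has ranks cofinal in $\alpha$. This is achievable by fixing an enumeration of $succ_p(\omega)$ whose ranks are non-decreasing with supremum $\alpha$ (available from the definition of $\alpha$-uniformity) and distributing the indices by a standard bit-interleaving. A secondary routine check is that $p^L=\bigcup_k p_k^L$ is indeed a Laver subtree of $p$: downward closedness uses the disjointness of the $A_k$'s together with the fact that distinct elements of $succ_p(\omega)$ are pairwise incomparable, which forces any predecessor of a node in $p_k^L$ to lie in $p_k^L$ itself.
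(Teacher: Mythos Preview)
Your proof is correct and follows essentially the same strategy as the paper: transfinite induction on $\beta$, partitioning the immediate successors of the root into infinitely many infinite blocks, applying the inductive hypothesis on the pieces, and then inserting a new layer of nodes between the root and those pieces to raise the uniformity by one. The only notable difference is a harmless reordering in the successor step: the paper fixes the partition of $succ_p(\omega)$ into $\alpha$-uniform pieces once at the outset and applies the inductive hypothesis to each piece, whereas you first apply the inductive hypothesis to the whole condition to obtain a $\beta'$-uniform $p'\leq p$ and only then partition $succ_{p'}(\omega)$; both routes lead to the same ``insert a level'' construction and the same witnesses for the forcing order. You are also more explicit than the paper about the one genuinely delicate point, namely that when the tree being partitioned is $\gamma$-uniform for a limit $\gamma$, the blocks must be chosen so that each still has successor-ranks cofinal in $\gamma$---the paper's ``it is easy to see that each $S_n$ is $\alpha$-uniform'' silently assumes this.
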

\begin{proof}
Let $\alpha$ and $p$ be as in the hypothesis. Let $\{P_n:n\in\omega\}$ be a partition of $succ_p(\omega)$ into infinite subsets and  define $S_n=\bigcup\{ p_{s}: s\in P_n\}$ for every $n\in\omega$. Note that each $S_n$ is a Laver subtree of $p$. Furthermore, it is easy to see that each $S_n$ is $\alpha$-uniform.

We will prove the Lemma by induction on $\beta$. If $\beta=\alpha$, there is nothing to do. So, assume that $\beta>\alpha$ and we already proved the result for each $\gamma<\beta$. Suppose that $\beta=\gamma+1$. For every $n\in\omega$, let $S'_n\in\mathbb{P}$ such that the following holds:

(1) $S'_n$ is $\gamma$-uniform

(2) $S'_n\leq S_n$

(3) There is $S_n^L\in\mathbb{L}(S_n)$ such that  $(S'_n, S_n^L)$ is a witness for $S'_n\leq S_n$.\\

Without loss of generality, $S'_n\cap S'_m=\{\omega\}$ whenever $n\neq m$. Let $\{x_n:n\in\omega\}\subseteq H(\omega_1)$ such that $\{x_n:n\in\omega\}\cap S'_n=\emptyset$ for each $S_n'$. For every $n\in\omega$, let $R_n=S'_n\cup\{x_n\}$ and define $\leq_n$ an order on $R_n$ as follows:
\begin{enumerate}[\hspace{0.5cm} (1)]
    \item  $\omega\leq_n x_n\leq_n s$ for every $s\in S'_n\backslash\{\omega\}$.
    \item If $s,t\in S'_n$, then $s\leq_n t$ if and only if $s\leq_{S'_n} t$.
\end{enumerate}  
Define $q=(\bigcup_{n\in\omega} R_n,\bigcup_{n\in\omega} \leq_n)$. Since $succ_q(\omega)=\{x_n:n\in\omega\}$ and each $x_n$ is $\gamma$-uniform in $q$, then it follows that $q$ is $\beta+1$-uniform. It is easy to see that $(q,\bigcup_{n\in\omega} S_n^L)$ is a witness for $q\leq p$.

To finish the proof, suppose that $\beta$ is a limit ordinal. Let $\langle\alpha_n\rangle_{n\in\omega}$ be an increasing sequence with limit $\alpha$ such that $\alpha_0=\alpha$. For every $n\in\omega$, let $S'_n\in\mathbb{P}$ as above, i.e., 

(1) $S'_n$ is $\gamma$-uniform

(2) $S'_n\leq S_n$

(3) There is $S_n^L\in\mathbb{L}(S_n)$ such that  $(S'_n, S_n^L)$ is a witness for $S'_n\leq S_n$.\\

Once again, we can suppose that $S'_n\cap S'_m=\{\omega\}$ whenever $n\neq m$. For every $n\in\omega$, define $(R_n,\leq_n)$ as in the previous case and let $q=(\bigcup_{n\in\omega}R_n, \bigcup_{n\in\omega}\leq_n)$. Note that in this case each $x_n$ is $\alpha_n+1$-uniform and therefore $q$ is $\alpha$-uniform. Finally, by construction $(q,\bigcup_{n\in\omega} S_n^L)$ is a witness for $q\leq p$. 
\end{proof}

\begin{lemma}
Let $\alpha<\omega_1$ and $p\in\mathbb{P}$ be a condition $\alpha$-uniform. If $q\in\mathbb{L}(p)$, then $q$ is $\alpha$-uniform.
\end{lemma}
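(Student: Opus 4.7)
The plan is to prove, by transfinite induction on $\beta<\omega_1$, the strengthened statement: for every $s\in q$, if $s$ is $\beta$-uniform in $p$, then $s$ is $\beta$-uniform in $q$. Applying this to $s=rt(q)$ yields the lemma, since $rt(p)\in q$ (for any $t\in q\subseteq p$, the subtree condition forces $rt(p)\leq_{p} t$ to lie in $q$, so $rt(q)=rt(p)$).

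The key structural observation driving the induction is that $succ_q(s)\subseteq succ_p(s)$. Indeed, if $t\in succ_q(s)$ and there were some $r\in p$ with $s<_p r<_p t$, then $r\leq_p t$ forces $r\in q$ by the subtree property, contradicting $t\in succ_q(s)$. In the base case $\beta=0$, a leaf of $p$ that belongs to $q$ has no successors in $q$ either, so it is a leaf of $q$. For the successor case $\beta=\gamma+1$, take any $t\in succ_q(s)$; then $t\in succ_p(s)$, so by assumption $t$ is $\gamma$-uniform in $p$, and by the inductive hypothesis $\gamma$-uniform in $q$. Hence $s$ is $(\gamma+1)$-uniform in $q$.

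The limit case is where the main subtlety lies. Assume $\beta$ is a limit ordinal, $s\in q$, and $s$ is $\beta$-uniform in $p$ via an enumeration $succ_p(s)=\{t_n:n\in\omega\}$ together with a strictly increasing sequence $\langle\beta_n\rangle_{n\in\omega}$ converging to $\beta$, with each $t_n$ being $\beta_n$-uniform in $p$. Since $s$ has infinitely many $p$-successors, $s\notin L(p)$, and because $L(q)\subseteq L(p)$ (as $q\in\mathbb{L}(p)$), we get $s\notin L(q)$, so $q$ being $\omega$-branching gives that $succ_q(s)$ is infinite. Let $A=\{n\in\omega:t_n\in succ_q(s)\}\in[\omega]^{\omega}$ and enumerate $A=\{n_k:k\in\omega\}$ in increasing order. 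Put $u_k=t_{n_k}$ and $\gamma_k=\beta_{n_k}$; then $\langle\gamma_k\rangle$ is strictly increasing, and by the inductive hypothesis each $u_k$ is $\gamma_k$-uniform in $q$. The only thing left to check, and the step I expect to be the main technical point, is that $\lim_k\gamma_k=\beta$: since $A$ is infinite, $n_k\to\infty$, and since $\beta_n\to\beta$, for every $\delta<\beta$ there is $N$ with $\beta_n>\delta$ for all $n\geq N$, whence $\gamma_k=\beta_{n_k}>\delta$ for all sufficiently large $k$. Thus $\sup_k\gamma_k=\beta$, and $s$ is $\beta$-uniform in $q$, completing the induction.
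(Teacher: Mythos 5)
Your proof is correct and takes essentially the same route as the paper's: an induction on the uniformity rank, using that $succ_q(s)\subseteq succ_p(s)$, that $L(q)\subseteq L(p)$ forces non-leaves of $p$ in $q$ to remain $\omega$-branching, and that in the limit case the surviving successors carry an infinite subsequence of ranks still converging to the limit. Your node-wise strengthened statement is just a reformulation of the paper's induction on the cones $q_s\setminus\{\omega\}\in\mathbb{L}(p_s\setminus\{\omega\})$, with somewhat more explicit bookkeeping in the limit case than the paper provides.
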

\begin{proof}
By induction on $\alpha$. If $\alpha=1$ there is nothing to do. If $\alpha=\beta+1$ note that for every $s\in succ_q(\omega)$ we have that $q_s\backslash\{\omega\}$ is a Laver subtree of $p_s\backslash\{\omega\}$; so by inductive hypothesis each $q_s\backslash\{\omega\}$ is $\beta$-uniform and therefore $q$ is $\beta+1$-uniform. Now assume that $\alpha$ is an ordinal limit. Let $\{s_n:n\in\omega\}$ be an enumeration of $succ_p(\omega)$ such that each $s_n$ is $\alpha_n$-uniform and the sequence $\langle \alpha_n:n\in\omega\rangle$ converges to $\alpha$. Once again, since $q_{s_n}\backslash\{\omega\}$ is a Laver subtree of $p_{s_n}\backslash\{\omega\}$ whenever $s_n\in succ_q(\omega)$, then $q_{s_n}\backslash\{\omega\}$ is $\alpha_n$-uniform. Since $\{\alpha<\omega_1:\exists s\in succ_q(\omega)(s\emph{ is }\alpha\emph{-uniform in q})\}$ is an infinte subset of $\langle\alpha_n:n\in\omega\rangle$, then we conclude that $q$ is $\alpha$-uniform. 
\end{proof}

Now we can prove the following.

\begin{lemma}\label{uniformehaciaarriba}
Let $p,q\in\mathbb{P}$ be conditions such that $q$ is $\alpha$-uniform and $p$ is $\beta$-uniform. If $p\leq q$, then $\alpha\leq \beta$.    
\end{lemma}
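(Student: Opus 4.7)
The plan is to reduce the statement to the Kätetov order. By Proposition \ref{Katetovequivalent}, $\mathcal{F}^{\ast}(q)\cong_{K}fin^{\alpha}$ and $\mathcal{F}^{\ast}(p)\cong_{K}fin^{\beta}$. Combined with the strict $\leq_{K}$-monotonicity of $\{fin^{\gamma}\}_{\gamma<\omega_{1}}$ recalled just before Corollary \ref{copiaisomorfa}, it will suffice to produce a Kätetov morphism from $\mathcal{F}^{\ast}(q)$ into $\mathcal{F}^{\ast}(p)$: this yields $fin^{\alpha}\leq_{K}fin^{\beta}$, and any putative $\alpha>\beta$ would contradict the strict monotonicity.

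Fix a witness $(p^{H},q^{L})$ of $p\leq q$. The first thing I would verify is the identity $L(p^{H})=L(q^{L})$: since $succ_{q^{L}}(\omega)$ is a barrier in $p^{H}$, every leaf of $p^{H}$ lies in some $[s]_{p^{H}}$ with $s\in succ_{q^{L}}(\omega)$, and conditions (2)--(3) of Definition \ref{definicion de forcing para ultrafiltro Shelah} make the cones $[s]_{p^{H}}$ and $[s]_{q^{L}}$ coincide as ordered sets, so the leaves on either side match; the reverse inclusion is symmetric. Having this, fix some $y_{0}\in L(q)$ and define $f\colon L(p)\to L(q)$ by $f(\ell)=\ell$ if $\ell\in L(p^{H})$ and $f(\ell)=y_{0}$ otherwise.

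To verify that $f$ is a Kätetov morphism I would take $I\in\mathcal{F}^{\ast}(q)$, pick $H_{q}\in\mathbb{H}(q)$ with $L(H_{q})\cap I=\emptyset$, and set $\bar{q}:=H_{q}\cap q^{L}$. By Lemma \ref{subarboles} we get $\bar{q}\in\mathbb{L}(q)$, and a routine check (removing finitely many children at each node) shows moreover that $\bar{q}$ is a Hechler subtree of $q^{L}$. The goal then reduces to promoting $\bar{q}$ to some $H_{p}\in\mathbb{H}(p^{H})\subseteq\mathbb{H}(p)$ with $L(H_{p})=L(\bar{q})$; once this is achieved, $f(L(H_{p}))=L(H_{p})\subseteq L(H_{q})$ is disjoint from $I$, and consequently $L(H_{p})\subseteq L(p)\setminus f^{-1}[I]$, so $f^{-1}[I]\in\mathcal{F}^{\ast}(p)$.

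The heart of the argument, and the main obstacle, is the construction of $H_{p}$. Above each $s\in succ_{\bar{q}}(\omega)$ one transplants $[s]_{\bar{q}}$ into $p^{H}$ using the coincidence $[s]_{q^{L}}=[s]_{p^{H}}$, which makes it automatically Hechler inside $[s]_{p^{H}}$. The delicate part happens below the barrier: the set $F:=succ_{q^{L}}(\omega)\setminus succ_{\bar{q}}(\omega)$ is finite (because $H_{q}$ is Hechler in $q$ at the root), but it must be avoided while keeping $H_{p}$ Hechler in $p^{H}$. I would handle this by removing, at the immediate predecessor of each $f\in F$ in $p^{H}$, the single child that lies on the unique path from that predecessor to $f$. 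Since $F$ is finite, only finitely many prunings occur, each at a single node and removing only finitely many children there, so $H_{p}\in\mathbb{H}(p^{H})$ as required, and its leaves are exactly those of $\bar{q}$.
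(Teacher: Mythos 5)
Your argument is correct, but it takes a genuinely different---and considerably heavier---route than the paper's. The paper's proof is three lines: fix a witness $(p^H,q^L)$ of $p\leq q$; by the lemma immediately preceding this one, Laver subtrees of an $\alpha$-uniform condition are again $\alpha$-uniform, so $q^L$ is $\alpha$-uniform and $p^H$ is $\beta$-uniform; since $q^L\subseteq p^H$ (the remark after Definition \ref{definicion de forcing para ultrafiltro Shelah}), with cones above the barrier coinciding, $\alpha\leq\beta$ follows directly at the level of trees. You instead transfer the problem to the Kat\v{e}tov order via Proposition \ref{Katetovequivalent} and the strict monotonicity of the $fin^\gamma$ hierarchy, and then hand-build a Kat\v{e}tov morphism witnessing $\mathcal{F}^*(q)\leq_K\mathcal{F}^*(p)$. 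Your construction of $H_p$ does work: your pruning rule (deleting each $f\in F$ as a child of its immediate predecessor, i.e., deleting $f$ itself) yields a Hechler subtree, although the Hechler condition should be verified at \emph{every} node below the barrier, not only at the finitely many pruned ones---the saving fact is that at any such node $t$, a deleted child $u$ lies in some cone $[f]_{p^H}$ with $f\leq u$, and $f<u$ would force $f\leq t$, so deleted children inject into the finite set $F$. The larger point, however, is that your entire morphism construction duplicates work already in the paper: Lemma \ref{genfilter} gives $\mathcal{F}(q)\subseteq\mathcal{F}(p)$, hence $\mathcal{F}^*(q)\subseteq\mathcal{F}^*(p)$, and Lemma \ref{Basic Katetov}(3) then yields $\mathcal{F}^*(q)\leq_K\mathcal{F}^*(p)$ via the identity map; your sandwich $fin^\alpha\cong_K\mathcal{F}^*(q)\leq_K\mathcal{F}^*(p)\cong_K fin^\beta$ then finishes exactly as you say. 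What your approach buys is conceptual content---it exhibits the lemma as the statement that the uniformity degree is a Kat\v{e}tov invariant of the generated filter, and it would survive any modification of the ordering for which Lemma \ref{genfilter} still holds. What it costs is the importation of a nontrivial external theorem (the strict increase of the $fin^\gamma$'s, from \cite{StructuralKatetov}), where the paper's argument is self-contained, purely combinatorial, and does not pass through filters at all.
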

\begin{proof}
Let $(p^H,q^L)$ be a witness for $p\leq q.$ By the previous lemma we know that $q^L$ is $\alpha$-uniform and $p^H$ is $\beta$-uniform. As we said before, it follows from the definition of the order on $\mathbb{P}$ that $q^L\subseteq p^H$ and therefore it must happen that $\alpha\leq \beta$. 
\end{proof}

The next is a very useful Lemma.

\begin{lemma}\label{witness}
Let $F$ be a finite subset of $\mathbb{P}$ and $p$ be an element of $\mathbb{P}$ such that $p\leq q$ for every $q\in F$. Then there exists $p'\in\mathbb{H}(p)$ such that for every $q\in F$ there is $\overline{q}\in \mathbb{L}(p)$ such that  $(p',q)$ is a witness of $p\leq q$.
\end{lemma}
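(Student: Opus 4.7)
The plan is to take $p' = \bigcap_{q \in F} p^H_q$, where for each $q \in F$ we fix a witness $(p^H_q, q^L)$ of $p \leq q$. By part~(2) of Lemma~\ref{subarboles} applied inductively over the finite set $F$, we have $p' \in \mathbb{H}(p)$. For each $q \in F$ we then define $\overline{q}$ inside $q$ by putting $\omega$ as the root, taking $succ_{\overline{q}}(\omega) = succ_{q^L}(\omega) \cap p'$, and for every $s$ in this set setting $[s]_{\overline{q}} = [s]_{q^L} \cap p'$.

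The key step is showing that $\overline{q} \in \mathbb{L}(q)$ and that the set $succ_{q^L}(\omega) \cap p'$ is an infinite barrier in $p'$. Since $p'$ is Hechler (in particular Laver) in $p^H_q$, leaves of $p'$ are leaves of $p^H_q$, so every branch of $p'$ is in fact a branch of $p^H_q$. Consequently each branch of $p'$ meets the barrier $succ_{q^L}(\omega)$, and the meeting point automatically lies in $p'$; hence $succ_{q^L}(\omega) \cap p'$ is a barrier in $p'$ (it is trivially an antichain). To see it is infinite: each immediate successor $t$ of $\omega$ in $p'$ lies on some branch of $p'$ whose meeting point $s_t$ with the barrier must satisfy $t \leq_{p'} s_t$, since $\omega$ is the only predecessor of $t$ in $p'$; the map $t \mapsto s_t$ is then injective because distinct immediate successors of $\omega$ in $p'$ generate disjoint cones. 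Above any such $s$, the set $[s]_{q^L} \cap p' = [s]_{p^H_q} \cap p'$ is a Hechler subtree of $[s]_{p^H_q} = [s]_{q^L}$ (using condition~(2) of the original witness), so it lies in $\mathbb{W}$ with leaves among the leaves of $q^L$, hence of $q$; this gives $\overline{q} \in \mathbb{L}(q)$.

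To see that $(p', \overline{q})$ witnesses $p \leq q$, observe that $succ_{\overline{q}}(\omega) = succ_{q^L}(\omega) \cap p'$ is already known to be an antichain and a barrier in $p'$. For $s \in succ_{\overline{q}}(\omega)$ we have $[s]_{p'} = [s]_{p^H_q} \cap p' = [s]_{q^L} \cap p' = [s]_{\overline{q}}$, and the order agreement between $p'$ and $\overline{q}$ above $s$ follows immediately from the fact that both inherit their order from $p^H_q$ and $q^L$ respectively, which themselves agree above $s$ by condition~(3) of the witness $(p^H_q, q^L)$. The main obstacle is really the infinite-barrier step; once one knows that Hechler subtrees preserve leaves of the ambient tree (so that branches survive pruning) and that barriers sitting above the root force the pruned tree to meet them at infinitely many nodes, everything else reduces to routine checking of the definitions.
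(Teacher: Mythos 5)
Your proof is correct and follows essentially the same route as the paper: the paper also takes $p'=\bigcap_{q\in F}p^H_q$ (Hechler by Lemma~\ref{subarboles}) and defines $\overline{q}=\bigcup\{q^L_s\cap p'_s:s\in succ_{q^L}(\omega)\cap p'\}$, which is exactly the tree you describe. The only difference is that you spell out the verification (barrier survival along branches, infinitude of $succ_{q^L}(\omega)\cap p'$, and the cone and order agreements) that the paper dismisses as ``easy to see,'' and your details check out.
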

\begin{proof}
For every $q\in F$ choose $(p_q^H,q^L)$ a witness of $p\leq q$ and define $p'=\bigcap_{q\in F}p^H_q$. Since $F$ is finite, by Lemma \ref{subarboles} we have that $p'\in\mathbb{H}(p)$. On the other hand, for each $q\in F$, define $\overline{q}=\bigcup\{q^L_s\cap p'_s:s\in succ_{q^L}(\omega)\cap p'\}$. It is easy to see that if $q\in F$, then $(p',\overline{q})$ is a witness of $p\leq q$.
\end{proof}

Recall that a forcing notion $(\mathbb{P},\leq)$ is $\sigma$-\emph{closed} if for every $\leq$-decreasing sequence $\{p_n:n\in\omega\},$ there exists $p\in\mathbb{P}$ such that $p\leq p_n$ for every $n\in\omega$. It is easy to see that $\sigma$-closed forcings add no new reals.

\begin{prop}
$\left(  \mathbb{P},\leq\right)  $ is $\sigma$-closed.
\end{prop}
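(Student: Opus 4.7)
The plan is to construct a common lower bound $p$ for the decreasing sequence $\{p_n\}_{n\in\omega}$ by a fusion-style argument that coherently combines the witnesses of the successive inequalities $p_{n+1}\leq p_n$. First I would fix, for each $n$, a witness $(p_{n+1}^H,p_n^L)$ of $p_{n+1}\leq p_n$. From the definition of a witness one reads off immediately that $p_n^L\subseteq p_{n+1}^H$ (as node sets) and that $[s]_{p_{n+1}^H}=[s]_{p_n^L}$ for each $s$ in the barrier $B_n:=succ_{p_n^L}(\omega)$. Equivalently, $p_{n+1}^H$ decomposes as $D_n\cup(p_n^L\setminus\{\omega\})$, where $D_n:=\{t\in p_{n+1}^H: t\leq_{p_{n+1}^H} b \text{ for some } b\in B_n\}$ is the ``new'' below-barrier Hechler piece contributed by $p_{n+1}$. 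Using Lemma~\ref{subarboles}(3), I would further refine so that $p_{n+1}^L\subseteq p_{n+1}^H$, so the Laver subtrees sit coherently inside the Hechler subtrees and the sequence $\{p_n^H\}_{n\in\omega}$ (with $p_0^H:=p_0$) is itself $\leq$-decreasing, witnessed by the same refined pairs.

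Next I would define $p$ by a recursive gluing along the barriers. The root of $p$ is $\omega$; below $B_0$ it inherits the structure of $p_1^H$ (in particular it includes $D_0$); above $B_0$ its structure matches $p_0^L\setminus\{\omega\}$, further refined by incorporating $D_1$ at the appropriate cones to make $B_1$ available, and then $D_2,D_3,\dots$ iteratively. The resulting $p$ is a well-founded $\omega$-branching tree with root $\omega$ and leaves in $\omega$: well-foundedness holds because each branch of $p$ eventually enters a well-founded cone of some $p_n^L$ and so terminates at a leaf in $\omega$, and $\omega$-branching at each internal node is inherited from the Hechler pieces $D_n$. Hence $p\in\mathbb{P}$.

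For each $n$, I would exhibit a witness $(p^{H,n},p_n^L)$ of $p\leq p_n$ by taking $p^{H,n}\in\mathbb{H}(p)$ to be the Hechler subtree of $p$ obtained by stringing together the below-barrier pieces $D_0,D_1,\dots,D_{n-1}$ leading up to $B_n$, together with the above-$B_n$ copy of $p_n^L\setminus\{\omega\}$. The barrier condition and the cone-matching conditions then follow from the construction, so $(p^{H,n},p_n^L)$ is a valid witness of $p\leq p_n$.

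The hard part will be verifying that $p^{H,n}$ is genuinely a Hechler subtree of $p$, i.e., that only finitely many successors are missing from $succ_p(t)$ at each node $t\in p^{H,n}$. Since each $D_k$ is a Hechler refinement contributing only finitely many extra successors per node, the sum over the finitely many $D_k$ used in $p^{H,n}$ stays finite at every node. Making this rigorous calls for a careful diagonal bookkeeping argument, in the spirit of the fusion construction in the proof of Lemma~\ref{alphauniform}.
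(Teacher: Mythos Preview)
Your fusion approach differs substantially from the paper's, and as written it has a genuine gap at the well-foundedness step. The paper avoids fusion entirely. Using Lemma~\ref{witness}, it fixes for each $n$ a single $p_n^H\in\mathbb{H}(p_n)$ that simultaneously serves as the Hechler side of a witness for $p_n\leq p_m$ for every $m\leq n$; it then simply picks one node $s_n\in succ_{p_n^H}(\omega)$ from each (chosen pairwise incompatible) and sets $q=\bigcup_{n\in\omega}(p_n^H)_{s_n}$. Every branch of $q$ lives entirely inside a single well-founded cone $(p_n^H)_{s_n}$, so $q\in\mathbb{P}$ is immediate, and $q\leq p_m$ is witnessed by the cofinite-at-the-root Hechler subtree $\bigcup_{k>m}(p_k^H)_{s_k}$ together with a Laver subtree of $p_m$ assembled from the stored witnesses $p_{m,k}^L$.

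In your construction, by contrast, $p$ is built by stacking the below-barrier pieces $D_0,D_1,D_2,\dots$ one above another. Your assertion that ``each branch of $p$ eventually enters a well-founded cone of some $p_n^L$'' is precisely the point at issue, and nothing in your construction forces it: the barrier $B_n$ can sit at arbitrary depth inside $p_{n+1}^H$, so a branch of the glued tree could thread through $D_0$, then $D_1$, then $D_2$, and so on without ever reaching a frozen cone. A working fusion would require a genuine diagonal freezing scheme---at stage $n$ fix a finite front of $p$ and permit refinement only strictly above it---which you gesture at but do not carry out; the Hechler-subtree difficulty you flag at the end is another face of the same unresolved issue. The paper's one-cone-per-condition trick bypasses both problems at once.
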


\begin{proof}
Let $\left\{  p_n  : n\in\omega\right\}  \subseteq\mathbb{P}$
be a decreasing sequence. For every $n\in\omega,$ choose $p^{H}_n  \in\mathbb{H}\left(  p_n  \right)  $  such that for
every $m\leq n,$ there is $p^L_{m,n}\in\mathbb{L}(p_m)$ such that $(p^H_n, p^L_{m,n})$ is a witness of $p_n \leq p_m$ (this can be done by Lemma \ref{witness}). We
now do the following:\\
\\
(0) Choose $s_{0}\in succ_{p_0}\left(  \omega\right)  .$
Define $R_{0}=(p_0)_{s_{0}}.$\\
(1) Choose $s_{1}\in succ_{p_1^{H}}\left(  \omega\right)
$ that is incompatible with $s_{0}$. Define $R_{1}=(p^{H}_1)_{s_{1}}.$\\
(2) Choose $s_{2}\in succ_{p^{H}_2}\left(  \omega\right)
$ that is incompatible with $s_{0}$ and $s_{1}$. Define $R_{2}=(p^{H}_2)_{s_{2}}$.\\
$\vdots$
\\
(n+1)  Choose $s_{n+1}\in succ_{p^{H}_ {n+1}}\left(
\omega\right)  $ that is incompatible with $s_{0},s_{1},...s_{n}$. Define
$R_{n+1}=(p^{H}_{n+1})_{s_{n+1}}.$\\
$\vdots$
\\
We now define $q=\bigcup_{n\in\omega}
R_{n}.$ It is clear that $q\in\mathbb{P}$. Finally, fix $m\in\omega$ and define  $\overline{q}=\bigcup_{k>m} R_k$ and $\overline{p}_m=\bigcup\{(p^L_{m,k})_s:\exists k> m (s\in succ^L_{p_{m,k }}(\omega)\cap \overline{q})\}$. It is easy to see that $(\overline{q},\overline{p}_n)$ is a witness of $q\leq p_n.$
\end{proof}

\begin{cor}
$\mathbb{P}$ adds no new reals.   
\end{cor}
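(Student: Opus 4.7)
The plan is to observe that this is an immediate consequence of the preceding proposition together with a standard and well-known fact about $\sigma$-closed forcings, which the excerpt itself records (``It is easy to see that $\sigma$-closed forcings add no new reals''). So the corollary reduces to citing the proposition just proved and invoking that standard fact.

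For completeness, I would include a brief reminder of the standard argument. Given any $\mathbb{P}$-name $\dot{f}$ for an element of $2^{\omega}$ (or more generally, any function $\omega \to V$) and any condition $p \in \mathbb{P}$, I would recursively construct a $\leq$-decreasing sequence $\langle p_n : n \in \omega\rangle$ with $p_0 \leq p$ such that $p_{n+1}$ decides the value $\dot{f}(n)$. This is possible because at stage $n$, the set of conditions deciding $\dot{f}(n)$ is dense, so one can find $p_{n+1} \leq p_n$ in that dense set.

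By the preceding proposition, $(\mathbb{P}, \leq)$ is $\sigma$-closed, so there exists $q \in \mathbb{P}$ with $q \leq p_n$ for every $n \in \omega$. Let $f \in 2^{\omega}$ be the ground-model function defined by letting $f(n)$ be the unique value forced to $\dot{f}(n)$ by $p_{n+1}$. Then $q \Vdash \dot{f} = \check{f}$, so $\dot{f}$ is forced by a dense set of conditions below $p$ to be a ground-model real. Since $p$ was arbitrary, this shows that every real in the generic extension belongs to the ground model.

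There is no real obstacle here; the argument is completely routine once $\sigma$-closure is in hand, and the only choice to make is how much of the standard proof to reproduce versus simply cite. Given that the paper already states the key fact explicitly, a one-line proof just referring back to the proposition would also suffice.
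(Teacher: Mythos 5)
Your proposal is correct and matches the paper exactly: the corollary is stated there as an immediate consequence of the $\sigma$-closedness of $(\mathbb{P},\leq)$ together with the standard fact the paper itself records, and your spelled-out decreasing-sequence argument is the routine proof of that fact. Nothing is missing; the one-line citation version would suffice, as you note.
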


\begin{lemma}\label{genfilter}
Let $p,q\in\mathbb{P}.$ If $p\leq q,$ then $\mathcal{F}\left(  q\right)
\subseteq\mathcal{F}\left(  p\right)  .$
\end{lemma}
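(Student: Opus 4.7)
The plan is to verify $\mathcal{F}(q) \subseteq \mathcal{F}(p)$ on generators. Pick $X \in \mathcal{F}(q)$ and $S \in \mathbb{H}(q)$ with $L(S) \subseteq X$; I want to produce $p^* \in \mathbb{H}(p)$ with $L(p^*) \subseteq L(S)$. Fix a witness $(p^H, q^L)$ for $p \leq q$ and set $B := succ_{q^L}(\omega)$. The clauses of Definition~\ref{definicion de forcing para ultrafiltro Shelah} give two structural facts I will lean on: $q^L \subseteq p^H$ as a set (because each $s \in B$ lies in $p^H$ with $[s]_{q^L}=[s]_{p^H}$), and on each cone $[s]_{p^H}$ with $s \in B$ the $p^H$- and $q^L$-orders coincide; furthermore $B$ is a barrier in $p^H$.

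Set $R := S \cap q^L$. By Lemma~\ref{subarboles}(3), $R \in \mathbb{L}(q)$, and certainly $R \subseteq p^H$. I would define the desired subtree to be the $p^H$-downward closure of $R$:
\[
p^* \;=\; \{\,t \in p^H : \exists\, s \in R,\ t \leq_{p^H} s\,\}.
\]
Since leaves of $R$ are natural numbers, they must be leaves of $p$ by the definition of $\mathbb{P}$, so a straightforward check gives $L(p^*) = L(R) \subseteq L(S) \subseteq X$. The entire argument thus reduces to showing $p^* \in \mathbb{H}(p^H)$, and this in turn forces $p^* \in \mathbb{H}(p)$ because Hechler subtrees compose.

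To verify the Hechler property I would split on the position of a non-leaf $t \in p^*$ relative to the barrier $B$. If $t$ is strictly $p^H$-below $B$ (equivalently, $t = \omega$ or $t \notin q^L$), then an immediate $p^H$-successor $u$ of $t$ fails to be in $p^*$ exactly when $[u]_{p^H}$ meets $B$ only inside $B \setminus S$; since $B \setminus S \subseteq succ_q(\omega) \setminus succ_S(\omega)$ is finite and each element of $B \setminus S$ sits above at most one immediate successor of $t$, only finitely many such $u$ are excluded. If instead $t \in q^L \setminus \{\omega\}$, the downward closure of $S$ in $q$ together with cone-matching forces $t \in R$, and then $succ_{p^H}(t) = succ_{q^L}(t)$, so the excluded successors coincide with $succ_{q^L}(t) \setminus S \subseteq succ_q(t) \setminus succ_S(t)$, again a finite set.

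The main obstacle will be bookkeeping this two-layer structure: below $B$ we see only the geometry of $p^H$ and rely on the finiteness of $B \setminus S$ to control the pruning, whereas above $B$ we must use cone-matching to transport the Hechler property of $S$ inside $q$ into the Hechler property inside $p^H$ that we actually need.
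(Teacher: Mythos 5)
Your proof is correct, but it takes a genuinely different route from the paper. The paper's own argument is three lines: given $q^H\in\mathbb{H}(q)$, it observes that $q\leq q^H$ (with the trivial witness $(q^H,q^H)$), invokes the already-proved transitivity of $\leq$ to get $p\leq q^H$, and then reads off $L(p^H)\subseteq L(q^H)$ from any witness $(p^H,\cdot)$ of $p\leq q^H$, since every leaf of $p^H$ lies above an element of the barrier and the cones above barrier elements match. You instead build the required Hechler subtree by hand: intersecting $S$ with the Laver part $q^L$ of a fixed witness (Lemma~\ref{subarboles}(3)), taking the $p^H$-downward closure, and verifying the Hechler property by the two-layer case analysis around the barrier $B$. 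All your bookkeeping checks out --- in particular, in your first case every immediate successor $u$ of $t$ does satisfy $[u]_{p^H}\cap B\neq\emptyset$ (no element of $B$ can lie below $t$), so ``excluded'' is indeed equivalent to $[u]_{p^H}\cap B\cap S=\emptyset$, and the injection of excluded successors into the finite set $B\setminus S$ works because distinct immediate successors of $t$ are incomparable; in your second case, cone-matching plus downward closure of $S$ in $q$ does force $t\in R$ and $succ_{p^H}(t)=succ_{q^L}(t)$. In effect you have inlined a special case of the paper's transitivity proof (compare your $R=S\cap q^L$ and downward closure $p^*$ with the trees $\overline{q}$ and $\overline{p}$ in that argument). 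The paper's route buys brevity by reusing established machinery; yours buys a self-contained verification that makes explicit exactly where the two sources of finiteness come from ($B\setminus S$ below the barrier, $succ_q(t)\setminus succ_S(t)$ above it), at the cost of duplicating work the transitivity lemma already did.
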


\begin{proof}
Let $q^{H}\in\mathbb{H}\left(  q\right)  ,$ we need to prove that
$L\left(  q^{H}\right)  \in\mathcal{F}\left(  p\right)  .$ Since $q^{H}$ is a
Hechler subtree of $q,$ we know that $q\leq q^{H},$ so $p\leq q^{H}.$ Let
$p^{H}\in\mathbb{H}(p)$ be a witness for $p\leq q^{H}.$ It
follows that $L\left(  p^{H}\right) \subseteq
L\left(  q^{H}\right)  .$ Since $L\left(  p^{H}\right)  \in\mathcal{F}\left(
p\right)  ,$ it follows that $L\left(  q^{H}\right)  \in\mathcal{F}\left(
p\right)  .$
\end{proof}

If $G\subseteq\mathbb{P}$ is a generic filter, in $V\left[  G\right]  $ we
define $\mathcal{U}_{gen}=%
%TCIMACRO{\dbigcup \limits_{p\in G}}%
%BeginExpansion
{\displaystyle\bigcup\limits_{p\in G}}
%EndExpansion
\mathcal{F}\left(  p\right)  .$

\begin{theorem}
$\mathbb{P}$ forces that $\mathcal{U}_{gen}$ is an ultrafilter.
\end{theorem}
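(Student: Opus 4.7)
The plan is a standard genericity argument. Since $(\mathbb{P},\leq)$ is $\sigma$-closed, no new reals are added, so every subset of $\omega$ in $V[G]$ already lies in $V$. It therefore suffices to establish two things: that $\mathcal{U}_{gen}$ is a proper filter on $\omega$ in $V[G]$, and that for each $X\subseteq \omega$ in $V$ the set
$$D_X \;=\; \{\, p\in\mathbb{P} : X\in \mathcal{F}(p)\ \text{or}\ \omega\setminus X\in \mathcal{F}(p)\,\}$$
is dense in $\mathbb{P}$. Together with the fact that $G$ meets every dense set in $V$, this gives the ultrafilter property.

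The filter property follows directly from Lemma~\ref{genfilter}: $p\leq q$ implies $\mathcal{F}(q)\subseteq \mathcal{F}(p)$, so the downward directedness of $G$ turns $\mathcal{U}_{gen}$ into an increasing union of filters, which is therefore itself a filter, and properness is inherited from each individual $\mathcal{F}(p)$.

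For the density of $D_X$, fix $X\subseteq \omega$ in $V$ and $q\in\mathbb{P}$. Because every element of $L(q)$ is a leaf, the set $X\cap L(q)$ is an antichain in $q$, so Proposition~\ref{dicotomia conjunto}(1) produces either some $S\in\mathbb{L}(q)$ with $L(S)\subseteq X\cap L(q)$, or some $S\in\mathbb{H}(q)$ with $L(S)\cap X=\emptyset$. In either case $S$ is a subtree of $q$ sharing its root $\omega$, its leaves are still natural numbers, and any natural in $S$ is a leaf; hence $S\in \mathbb{P}$. Since every Hechler subtree is also a Laver subtree, the pair $(S,S)$ trivially witnesses $S\leq q$: $S$ is a Hechler subtree of itself, $S$ lies in $\mathbb{L}(q)$, $succ_S(\omega)$ is a barrier in $S$, and the cone and order conditions hold tautologically. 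In the first case $L(S)\subseteq X$ together with $L(S)\in \mathcal{F}(S)$ yields $X\in \mathcal{F}(S)$, and in the second case $L(S)\subseteq \omega\setminus X$ yields $\omega\setminus X\in \mathcal{F}(S)$. Thus $S\in D_X$ and $S\leq q$, proving density.

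The only nontrivial input is the dichotomy Proposition~\ref{dicotomia conjunto}, which is already available; the remaining verifications (that $S\in \mathbb{P}$ and that $(S,S)$ witnesses $S\leq q$) are immediate from the definitions. There is no real obstacle beyond remembering that the relevant decision happens at the level of leaves, and that a Hechler subtree is automatically a Laver subtree so the trivial witness is legitimate in both branches of the dichotomy.
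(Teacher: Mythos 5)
Your proof is correct and takes essentially the same route as the paper: both reduce the problem, via $\sigma$-closedness and Lemma~\ref{genfilter}, to showing that any $X\subseteq\omega$ from $V$ can be decided below any condition, and both get the deciding condition from Proposition~\ref{dicotomia conjunto}. The only cosmetic difference is that the paper case-splits on whether $X\in\mathcal{F}(p)$ and then applies part (2) of that proposition to $\omega\setminus X$, whereas you invoke part (1) directly and spell out the trivial witness $(S,S)$ for $S\leq q$, a verification the paper leaves implicit.
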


\begin{proof}
It is clear that $\mathcal{U}_{gen}$ is forced to be a filter by Lemma \ref{genfilter}. Since
$\mathbb{P}$ does not add reals, it is enough to prove that for every
$p\in\mathbb{P}$ and $X\subseteq\omega,$ there is $q\leq p$ such that either
$X\in\mathcal{F}\left(  q\right)  $ or $\omega\setminus X\in\mathcal{F}\left(
q\right)  .$

Let $p\in\mathbb{P}$ and $X\subseteq\omega.$ If $X\in\mathcal{F}\left(
p\right)  $ there is nothing to do, so assume that $X\notin\mathcal{F}\left(
p\right)  ,$ so $\omega\setminus X\in\mathcal{F}\left(  p\right)  ^{+}.$ By
the Proposition \ref{dicotomia conjunto}, there is $q\in\mathbb{L}\left(
p\right)  $ such that $L\left(  q\right)  \subseteq\omega\setminus X.$ The
condition $q$ has the desired properties.
\end{proof}

We finish this section by showing that $\mathbb{P}$ forces CH, but first we need some auxiliary results and definitions. All these auxiliary results were worked by B. Balcar, M. Doucha and M. Hrušák. The reader can consult the details in \cite{Basetree}.\\

The \emph{height} of a partial order $(P\leq)$ (denoted by $\mathfrak{h}(P)$) is defined  as the minimal
cardinality of a system of open dense subsets of $P$ such that the intersection of the system is not dense. In other words, $$\mathfrak{h}(P)=\min\{|H|: \forall D\in H(D\emph{ is open dense})\wedge(\bigcap H\emph{ is not dense})\}.$$ The height is a forcing invariant, that means every dense subset of an ordering has the same height. It is well know that for every ordering $P$, we have that $\mathfrak{h}(P)$ is the minimal cardinal $\kappa$ such that forcing with $P$ adds a new subset of $\kappa.$ In particular, forcing with $P$
preserves all cardinals less than $\kappa$.

\begin{prop}
$\mathfrak{h}(\mathbb{P})=\omega_1$.
\end{prop}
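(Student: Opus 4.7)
I would prove the equality by two matching inequalities.

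For $\mathfrak{h}(\mathbb{P})\geq\omega_1$: by the corollary just before, $\mathbb{P}$ is $\sigma$-closed and adds no new reals, i.e.\ no new subsets of $\omega$. Using the characterisation recalled in the paragraph above, namely that $\mathfrak{h}(\mathbb{P})$ is the least $\kappa$ such that forcing with $\mathbb{P}$ adds a new subset of $\kappa$, we conclude $\mathfrak{h}(\mathbb{P})>\omega$.

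For $\mathfrak{h}(\mathbb{P})\leq\omega_1$: I would exhibit an $\omega_1$-indexed family of open dense subsets of $\mathbb{P}$ whose intersection is not dense. For each $\alpha<\omega_1$ set
$$D_\alpha=\{p\in\mathbb{P}:\text{there exists a uniform }r\in\mathbb{P}\text{ of uniformity level }\geq\alpha\text{ with }p\leq r\}.$$
By construction $D_\alpha$ is downward closed, hence open. Density follows by chaining Corollary \ref{densosuniformes} (every condition has a uniform extension) with Lemma \ref{alphauniform} (every uniform condition has an extension of arbitrarily high uniformity level): given $p\in\mathbb{P}$, first take a uniform $q\leq p$, and then produce $q'\leq q$ which is $\beta$-uniform for some $\beta\geq\alpha$; since $q'\leq q'$ and $q'$ itself is uniform of level $\geq\alpha$, we get $q'\in D_\alpha$.

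To see that $\bigcap_{\alpha<\omega_1}D_\alpha$ is not dense, suppose toward a contradiction it is. Pick any $p_0\in\mathbb{P}$ and some $q\leq p_0$ in the intersection. Because each $D_\alpha$ is downward closed, we may further refine $q$ using Corollary \ref{densosuniformes} to assume that $q$ itself is uniform, say of level $\gamma<\omega_1$, without leaving any $D_\alpha$. By membership in each $D_\alpha$, for every $\alpha<\omega_1$ there is a uniform $r_\alpha$ of level $\geq\alpha$ with $q\leq r_\alpha$. Applying Lemma \ref{uniformehaciaarriba} to the pair $q\leq r_\alpha$ bounds the uniformity level of $r_\alpha$ above by $\gamma$, and thus $\alpha\leq\gamma$ for every $\alpha<\omega_1$, a contradiction. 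Hence $\{D_\alpha:\alpha<\omega_1\}$ witnesses $\mathfrak{h}(\mathbb{P})\leq\omega_1$.

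The main obstacle is really conceptual rather than computational: one has to recognise that Lemma \ref{uniformehaciaarriba} makes the uniformity level a monotone invariant along the forcing order, while any individual condition in $\mathbb{P}$, sitting in $\textsf{H}(\omega_1)$, can only be $\gamma$-uniform for some specific $\gamma<\omega_1$. Once this monotonicity is isolated, the $D_\alpha$'s essentially write themselves; the only minor care needed is to use downward closure of each $D_\alpha$ so that, in the contradiction step, one is free to replace the witness $q$ by a uniform refinement.
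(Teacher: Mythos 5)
Your proof is correct and takes essentially the same approach as the paper: $\sigma$-closure gives $\mathfrak{h}(\mathbb{P})\geq\omega_1$, and your $D_\alpha$ is exactly the paper's downward closure $D'_\alpha$ of the set of $\beta$-uniform conditions with $\beta$ large, with density from Corollary \ref{densosuniformes} plus Lemma \ref{alphauniform} and the contradiction via the monotonicity of uniformity levels from Lemma \ref{uniformehaciaarriba}. The only cosmetic difference is that the paper shows $\bigcap_{\alpha<\omega_1}D'_\alpha=\emptyset$ outright, while you phrase the same refinement-to-a-uniform-condition argument as a failure of density.
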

\begin{proof}
Observe that since $\mathbb{P}$ is $\sigma$-closed it follows that $\mathfrak{h}(\mathbb{P})\geq\omega_1$. For every $\alpha<\omega_1$ define $D_\alpha=\{p\in\mathcal{\mathbb{P}:\exists\beta>\alpha}(\textit{p is }\beta\textit{-uniform})\}$ and $D'_{\alpha}=\{q\in\mathbb{P}:\exists p\in D_\alpha (q\leq p)\}$. By Corolary \ref{densosuniformes} and Lemma \ref{alphauniform} we have that each $D'_\alpha$ is open dense. The following claim finishes the proof.
\begin{claim}
$\bigcap_{\alpha<\omega_1}D'_\alpha=\emptyset$.
\end{claim}
\begin{proof}
Suppose that $p\in\bigcap_{\alpha<\omega_1}D'_\alpha$. Let $p'$ be a $\gamma$-uniform condition such that $p'\leq p$. Observe that since each $D'_\alpha$ is open, then $p'\in D'_\alpha$ for every $\alpha<\omega_1$. Let $\alpha>\gamma$. Since $p'\in D'_\alpha$, there are $\beta>\alpha$ and a condition $q$ that is $\beta$-uniform such that $q\leq p'$, but then by Lemma \ref{uniformehaciaarriba} we have that $\beta\leq \gamma$ which is a contradiction. Thus, $\bigcap_{\alpha<\omega_1}D'_\alpha=\emptyset$.
\end{proof}
\end{proof}

An ordering $P$ is homogeneous in $\mathfrak{h}$ (homogeneous in height) if for every $p\in\mathbb{P}$ it follows that $\mathfrak{h}(\downarrow p)=\mathfrak{h}(P)$ where $\downarrow p=\{q\in\mathbb{P}:q\leq p\}$.

\begin{lemma}\label{homogeneousinheight}
$\mathbb{P}$ is homogeneous in height.
\end{lemma}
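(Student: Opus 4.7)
The plan is to mimic, inside the cone $\downarrow p$, the argument that proved $\mathfrak{h}(\mathbb{P})=\omega_1$. Since we already know $\mathfrak{h}(\mathbb{P})=\omega_1$ and $\mathbb{P}$ is $\sigma$-closed, it suffices to verify both $\mathfrak{h}(\downarrow p)\geq \omega_1$ and $\mathfrak{h}(\downarrow p)\leq \omega_1$ for an arbitrary $p\in\mathbb{P}$.

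For the lower bound, I would first observe that $\downarrow p$ is itself $\sigma$-closed: given a $\leq$-decreasing sequence $\{q_n:n\in\omega\}\subseteq\downarrow p$, the $\sigma$-closedness of $\mathbb{P}$ yields a lower bound $q\in\mathbb{P}$, and transitivity of $\leq$ (together with $q\leq q_0\leq p$) puts $q$ into $\downarrow p$. Hence no countable family of open dense subsets of $\downarrow p$ can have non-dense intersection, so $\mathfrak{h}(\downarrow p)\geq\omega_1$.

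For the upper bound, I would define, for each $\alpha<\omega_1$,
\[
E_\alpha=\{q\in\downarrow p:\exists\beta>\alpha\;(q\text{ is }\beta\text{-uniform})\},\qquad E'_\alpha=\{r\in\downarrow p:\exists q\in E_\alpha\;(r\leq q)\}.
\]
Each $E'_\alpha$ is open by construction. Density in $\downarrow p$ follows from Corollary \ref{densosuniformes} and Lemma \ref{alphauniform}: given any $r\in\downarrow p$, first refine to a uniform $r'\leq r$, then use Lemma \ref{alphauniform} to produce a $\beta$-uniform $r''\leq r'$ with $\beta>\alpha$; then $r''\in E_\alpha\subseteq E'_\alpha$ and $r''\leq r$.

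To finish I would show $\bigcap_{\alpha<\omega_1}E'_\alpha=\emptyset$, copying the claim from the proof of $\mathfrak{h}(\mathbb{P})=\omega_1$. If $r$ lay in this intersection, Corollary \ref{densosuniformes} would give a $\gamma$-uniform condition $r'\leq r$ for some countable $\gamma$; as each $E'_\alpha$ is open, $r'\in E'_\alpha$ for all $\alpha<\omega_1$, so taking $\alpha=\gamma$ produces some $\beta$-uniform $q$ with $\beta>\gamma$ and some $r''\leq q$ with $r''\leq r'$. But then $r''\leq r'$, together with Lemma \ref{uniformehaciaarriba}, would force $\beta$ (the uniform rank of a condition below $r'$, after further refining $r''$ to a uniform condition via Corollary \ref{densosuniformes}) to be $\leq\gamma$, contradicting $\beta>\gamma$. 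Thus the intersection is empty and $\mathfrak{h}(\downarrow p)\leq\omega_1$.

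There is no real obstacle here beyond bookkeeping: the key point is that none of the ingredients used for $\mathbb{P}$ (Corollary \ref{densosuniformes}, Lemma \ref{alphauniform}, Lemma \ref{uniformehaciaarriba}) referred to the ambient root condition, so they all relativize to $\downarrow p$. The only mild subtlety is ensuring that the $\beta$-uniform condition one obtains from Lemma \ref{alphauniform} actually lies in $\downarrow p$, which is automatic since Lemma \ref{alphauniform} produces an extension of the given condition and $\leq$ is transitive.
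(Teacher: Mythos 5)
Your proof is correct and takes essentially the same route as the paper, which simply sets $E_\alpha=D'_\alpha\cap\downarrow p$ (the sets $D'_\alpha$ from the proof that $\mathfrak{h}(\mathbb{P})=\omega_1$) and notes that the same density and empty-intersection arguments, via Corollary \ref{densosuniformes}, Lemma \ref{alphauniform} and Lemma \ref{uniformehaciaarriba}, relativize to the cone, with the lower bound $\mathfrak{h}(\downarrow p)\geq\omega_1$ coming from $\sigma$-closedness exactly as you say. One cosmetic remark: in your final contradiction the detour through $r''$ is unnecessary, since $r'\in E'_\gamma$ already gives $r'\leq q$ with $q$ being $\beta$-uniform and $r'$ being $\gamma$-uniform, so Lemma \ref{uniformehaciaarriba} applies directly to yield $\beta\leq\gamma$.
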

\begin{proof}
Let $p\in \mathbb{P}$. We want to show that $\mathfrak{h}(\downarrow p)=\omega_1$. It is clear that $\mathfrak{h}(\downarrow p)\geq\omega_1$. For every $\alpha<\omega$, define $D'_\alpha$ as in the proof of the above Proposition and let $E_\alpha=D'_\alpha\cap\downarrow p$. The same argument as before shows that $\{E_\alpha:\alpha<\omega_1\}$ is a family of open dense subsets of $\downarrow p$ without dense intersection.
\end{proof}

\begin{mydef}(Base tree property)\label{DefBT}
An ordering $(P,\leq)$ has the base
tree property (we shall shortly say it has the BT-\emph{property}) if it contains
a dense subset $D\subseteq P$ with the following properties:
\begin{enumerate}[\hspace{0.5 cm} (1)]
\item It is atomless, that is, for every $d\in D$ there are $d_1$ and $d_2$ bellow $d$ such that $d_1$ and $d_2$ are incompatible.
\item $D$ is $\sigma$-closed.
\item $|D|\leq\mathfrak{c}$.
\end{enumerate}
\end{mydef}

\begin{prop}\label{PhasBT}
$\mathbb{P}$ has the base
tree property   
\end{prop}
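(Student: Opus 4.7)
The plan is to show that $\mathbb{P}$ itself serves as the dense subset $D$ witnessing the BT-property. Density of $\mathbb{P}$ in itself is trivial, the $\sigma$-closure was already established earlier in the section, and $|\mathbb{P}|\leq\mathfrak{c}$ follows at once from $\mathbb{P}\subseteq\textsf{H}(\omega_1)$. The only nontrivial requirement is atomlessness.

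Given $p\in\mathbb{P}$, I would partition $succ_p(\omega)=A_0\sqcup A_1$ into two disjoint infinite pieces and form the Laver subtrees $q_i=\{\omega\}\cup\bigcup_{s\in A_i}[s]_p$ for $i=0,1$. Each $q_i$ belongs to $\mathbb{P}$, and the pair $(q_i,q_i)$ trivially witnesses $q_i\leq p$. The key step is to prove that no $r\in\mathbb{P}$ extends both $q_0$ and $q_1$.

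Assume toward a contradiction that some $r$ does. Applying Lemma \ref{witness} to $F=\{q_0,q_1\}$, I would find a single $r'\in\mathbb{H}(r)$ together with $\overline{q}_i\in\mathbb{L}(q_i)$ such that $(r',\overline{q}_i)$ witnesses $r\leq q_i$ for $i=0,1$. Since $\overline{q}_i$ is a Laver subtree of $q_i$, its immediate successors of $\omega$ satisfy $succ_{\overline{q}_i}(\omega)\subseteq A_i$, and this set is a barrier in $r'$. Hence every branch of $r'$ meets both $A_0$ and $A_1$, in nodes $s_0$ and $s_1$ respectively; these two nodes are comparable in $r'$, and without loss of generality $s_0<_{r'} s_1$. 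Condition (2) of Definition \ref{definicion de forcing para ultrafiltro Shelah} applied to $(r',\overline{q}_0)$ gives $[s_0]_{r'}=[s_0]_{\overline{q}_0}\subseteq q_0$, so $s_1\in q_0$; but the elements of $A_0$ and $A_1$ are pairwise incomparable in $p$, which forces $A_1\cap q_0=\emptyset$, a contradiction.

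The main obstacle is recognizing that incompatibility does not follow from a naive level-by-level argument: the barrier $succ_{\overline{q}_i}(\omega)$ sits inside $r'$ at unpredictable heights rather than at the first level, so one cannot just conclude $succ_{r'}(\omega)\subseteq A_0\cap A_1=\emptyset$. Instead, one must exploit the strong cone-agreement condition (2) in the definition of $\leq$, which is precisely what prevents an $A_0$-cone in $r'$ from accommodating any $A_1$-node.
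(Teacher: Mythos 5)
Your proof is correct and takes the same approach as the paper: the paper's entire proof is to set $D=\mathbb{P}$ and assert that all conditions of Definition \ref{DefBT} follow directly from the definition of $\mathbb{P}$. Your splitting of $succ_p(\omega)$ into two infinite pieces $A_0\sqcup A_1$ and the incompatibility argument via Lemma \ref{witness} and the cone-agreement condition (2) is a correct (and welcome) elaboration of the atomlessness clause that the paper leaves to the reader.
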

\begin{proof}
Take $D=\mathbb{P}$. It follows directly from the definition of $\mathbb{P}$ that $D$ satisfies all the conditions in the Definition \ref{DefBT}.
\end{proof}

\begin{theorem}[The base tree Theorem, see \cite{Basetree}]
Let $(P,\leq)$ be an ordering
homogeneous in the height with the BT-property. Then there exists a sequence $\{T_\alpha:\alpha<\mathfrak{h}(P)\}$ of maximal antichains of $P$ such that:
\begin{enumerate}[\hspace{0.5cm} (1)]
    \item $T_P=\bigcup_{\alpha<\mathfrak{h}(P)}T_\alpha$ is a tree of height $\mathfrak{h}(P)$, where $T_\alpha$ is the $\alpha$-th level of the tree.
    \item Each $t\in T$ has $\mathfrak{c}$ immediate successors.
    \item $T$ is dense in $P$.
\end{enumerate}
$T$ is called the base tree of $P$.
\end{theorem}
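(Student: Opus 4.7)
The plan is to construct the base tree $T=\bigcup_{\alpha<\mathfrak{h}(P)}T_\alpha$ by transfinite recursion on $\alpha$, working entirely inside the dense subset $D\subseteq P$ provided by the BT-property. At each step I will maintain the invariants that $T_\alpha$ is a maximal antichain of $P$ (equivalently of $D$, since $D$ is dense) and that every element of $T_\alpha$ lies below some element of $T_\beta$ for each $\beta<\alpha$. Because $|D|\leq\mathfrak{c}$, the set of ``candidate'' extensions at every stage has size at most $\mathfrak{c}$, which is what allows the successor-level construction to produce exactly $\mathfrak{c}$ immediate successors.

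For $T_0$ I would simply take any maximal antichain in $D$. For the successor step, given $t\in T_\alpha$, the set $\downarrow t\cap D$ is atomless and $\sigma$-closed, so a standard greedy recursion of length $\mathfrak{c}$ produces a maximal antichain $A_t\subseteq\downarrow t\cap D$ with $|A_t|=\mathfrak{c}$ (enumerate $D\cap\downarrow t=\{d_\xi:\xi<\mathfrak{c}\}$ and at stage $\xi$ either extend the partial antichain by an incompatible element below $d_\xi$ or note that $d_\xi$ is already incompatible with some previously chosen element). Setting $T_{\alpha+1}=\bigcup_{t\in T_\alpha}A_t$ preserves both invariants.

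The key step is the limit stage $\alpha<\mathfrak{h}(P)$. For each $\beta<\alpha$ the set $E_\beta=\{p\in D:(\exists t\in T_\beta)\,p\leq t\}$ is open dense in $D$, so by the very definition of $\mathfrak{h}(P)$ applied to the dense subset $D$, the intersection $\bigcap_{\beta<\alpha}E_\beta$ is still dense in $D$. Choose $T_\alpha$ to be any maximal antichain of $D$ contained in this intersection; then $T_\alpha$ refines all earlier levels and remains maximal in $P$. When $\cf(\alpha)=\omega$, $\sigma$-closure of $D$ simplifies this: any descending chain along a branch through $\bigcup_{\beta<\alpha}T_\beta$ has a lower bound in $D$, and such lower bounds already form a dense set below the relevant branches.

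Finally, to verify that $T$ is dense in $P$, fix $p\in P$ and suppose for contradiction that no $t\in T$ satisfies $t\leq p$. Then each $T_\alpha\cap{\downarrow p}$ fails to be maximal below $p$, which by the construction of the levels translates to a system of fewer than $\mathfrak{h}(P)$ open dense subsets of $\downarrow p$ with empty intersection below some extension of $p$. This would give $\mathfrak{h}(\downarrow p)<\mathfrak{h}(P)$, contradicting homogeneity in height. The main obstacle I would expect is the limit step: making the choice of $T_\alpha$ simultaneously refine all previous levels, remain a maximal antichain of $P$, and lie in the restricted dense subset $D$ all at once requires genuinely exploiting both the definition of $\mathfrak{h}$ and the $\sigma$-closure of $D$, and the book-keeping to ensure each node ultimately acquires $\mathfrak{c}$ immediate successors (rather than just $\geq 2$) must be folded into the successor step as described.
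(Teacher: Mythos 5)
First, note that the paper does not prove this theorem at all: it is imported from Balcar--Doucha--Hru\v{s}\'ak \cite{Basetree}, so your attempt must be measured against the known proof rather than anything in this text. The parts of your recursion that work are fine: $T_0$ exists, the successor step is correct (atomlessness plus $\sigma$-closure of $D$ gives a binary tree of height $\omega$ whose branches have pairwise incompatible lower bounds, hence a maximal antichain of size exactly $\mathfrak{c}$ below each node, using $|D|\leq\mathfrak{c}$ for the upper bound), and the limit step is also correct, since the height is a forcing invariant, so $\mathfrak{h}(D)=\mathfrak{h}(P)$ and any family of fewer than $\mathfrak{h}(P)$ open dense subsets of $D$ has dense intersection. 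Contrary to what you flag at the end, the limit stage is \emph{not} the main obstacle.

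The genuine gap is clause (3), density of $T$ in $P$, and your argument for it does not work. If no node of $T$ lies below $p$, the natural witnesses are the sets $E_\alpha=\{q: \exists t\in T_\alpha\, (q\leq t)\}$ relativized to $\downarrow p$; but there are exactly $\mathfrak{h}(P)$ of them, so even if their intersection below $p$ is empty you only get $\mathfrak{h}(\downarrow p)\leq\mathfrak{h}(P)$, which is perfectly consistent with homogeneity in height. To reach a contradiction you would need \emph{strictly fewer} than $\mathfrak{h}(P)$ open dense subsets of $\downarrow p$ with non-dense intersection, and your construction produces no such smaller family; the phrase ``translates to a system of fewer than $\mathfrak{h}(P)$ open dense subsets'' is exactly the step that is never justified. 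Moreover, nothing in your recursion prevents the union from failing to be dense: a maximal antichain only needs members \emph{compatible} with a fixed $p$, not members below $p$, and with arbitrary choices one can maintain, level after level, antichains all of whose elements compatible with $p$ properly straddle $p$. The known proof removes this freedom by seeding the recursion with a witnessing family $\{D_\alpha:\alpha<\mathfrak{h}(P)\}$ of open dense sets with non-dense intersection (which exists by the definition of $\mathfrak{h}(P)$), using homogeneity in height to arrange that the intersection fails to be dense below \emph{every} condition, and requiring $T_\alpha\subseteq D_\alpha$ in addition to refining the earlier levels; density of $\bigcup_\alpha T_\alpha$ is then derived from the fact that a condition $p$ with no node of $T$ below it would yield (via $\sigma$-closure at the countable-cofinality stages) a condition below $p$ in $\bigcap_\alpha D_\alpha$, contradicting the choice of the witnesses. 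Tellingly, your construction uses homogeneity in height only in the broken final step, whereas it is an essential hypothesis that must enter the construction itself; this missing non-distributivity ingredient is the heart of the Balcar--Pelant--Simon style argument that \cite{Basetree} generalizes.
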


Lemma \ref{homogeneousinheight} and Proposition \ref{PhasBT} show that $\mathbb{P}$ satisfies the conditions of the base tree Theorem. Note that clause (3) implies that forcing with $T_\mathbb{P}$ is equivalent to forcing with $\mathbb{P}$. 

\begin{theorem}
$T_\mathbb{P}$ forces CH. In particular, $\mathbb{P}$ forces CH.
\end{theorem}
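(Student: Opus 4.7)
The strategy is to exploit the two parameters of the base tree supplied by the theorem just quoted, namely that it has height $\omega_1$ and that every node admits $\mathfrak{c}$ immediate successors. Together with the $\sigma$-closure of $\mathbb{P}$ (which gives $\mathbb{R}^{V[G]}=\mathbb{R}^{V}$ and preserves $\omega_1$), these two features let us manufacture a surjection $\omega_1\twoheadrightarrow\mathbb{R}^{V}$ in the extension, which collapses the continuum to $\aleph_1$.

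Working in $V$, write $\kappa=\mathfrak{c}^{V}$ and fix an enumeration $\mathbb{R}^{V}=\{x_\xi:\xi<\kappa\}$. Using AC in $V$, for each $t\in T_{\mathbb{P}}$ fix an enumeration $\{s^{t}_{\xi}:\xi<\kappa\}$ of the set of immediate successors of $t$ in $T_{\mathbb{P}}$, which has size $\kappa$ by clause (2) of the base tree theorem. For each pair $(\xi,\alpha)\in\kappa\times\omega_1$ put
\[
D_{\xi,\alpha}=\{\,s^{t}_{\xi}:t\in T_{\mathbb{P}}\text{ lies at some level }\geq\alpha\,\}.
\]
I claim $D_{\xi,\alpha}$ is dense in $\mathbb{P}$. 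Indeed, take any $p\in\mathbb{P}$; since $T_\mathbb{P}$ is dense in $\mathbb{P}$ we may replace $p$ by an extension lying in $T_{\mathbb{P}}$, say at level $\gamma$. Choose any $\beta>\max(\alpha,\gamma)$; since $T_\beta$ is a maximal antichain of $\mathbb{P}$, there is $t\in T_\beta$ with $t\leq p$. Then $s^{t}_{\xi}\leq t\leq p$ and $s^{t}_{\xi}\in D_{\xi,\alpha}$.

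Now let $G$ be $\mathbb{P}$-generic over $V$. For each $\alpha<\omega_1$ let $t_\alpha$ denote the unique element of $T_\alpha\cap G$ (unique by maximality of the antichain $T_\alpha$ and the filter property of $G$). In $V[G]$ define $f:\omega_1\to\mathbb{R}^{V}$ by declaring $f(\beta+1)=x_\xi$ whenever $t_{\beta+1}=s^{t_\beta}_{\xi}$, and $f(\gamma)=x_0$ at $\gamma=0$ or $\gamma$ limit. Given any real $r=x_\xi\in\mathbb{R}^{V}$, genericity supplies some $q\in G\cap D_{\xi,0}$, say $q=s^{t}_{\xi}$ with $t\in T_\beta$; but then $t=t_\beta$ and $q=t_{\beta+1}$, whence $f(\beta+1)=r$. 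Thus $f$ is surjective.

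Finally, the $\sigma$-closure of $\mathbb{P}$ (established earlier) gives $\mathbb{R}^{V[G]}=\mathbb{R}^{V}$ and $\aleph_1^{V[G]}=\aleph_1^{V}$. Combined with the surjection $f$ produced above we obtain $\mathfrak{c}^{V[G]}=|\mathbb{R}^{V[G]}|=|\mathbb{R}^{V}|\leq\aleph_1$ in $V[G]$, so CH holds there. The principal point to watch is the density verification for $D_{\xi,\alpha}$: one must combine density of $T_\mathbb{P}$ in $\mathbb{P}$ with maximality of the antichains $T_\beta$ to reach a successor on the branch whose index $\xi$ has been prescribed. Everything else is routine bookkeeping about the generic branch through the base tree.
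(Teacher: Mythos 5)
Your proposal is correct and takes essentially the same approach as the paper: the paper's proof simply invokes clause (2) of the base tree theorem to assert that $T_\mathbb{P}$ adds a surjection from $\mathfrak{h}(\mathbb{P})=\omega_1$ onto $\mathfrak{c}$, and then concludes CH from the preservation of $\omega_1$ and the absence of new reals, exactly as you do. Your explicit construction of the dense sets $D_{\xi,\alpha}$ and the reading of the surjection off the generic branch is precisely the content the paper compresses into \textquotedblleft it is easy to see\textquotedblright\ (the only step you gloss is that a $t\in T_\beta$ compatible with a node of the level $T_\gamma$, $\gamma<\beta$, must actually extend it, which follows since the tree-predecessor of $t$ in the maximal antichain $T_\gamma$ is the unique element of that level compatible with $t$).
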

\begin{proof}
By clause (2) it is easy to see that $T_\mathbb{P}$ adds a sujective function from $\mathfrak{h}(\mathbb{P})$ (which is equal to $\omega_1$) to $\mathfrak{c}$. Since $\mathfrak{h}(\mathbb{P})$ is not collapsed and $T_\mathbb{P}$ adds no new reals, then $\mathfrak{c}^{V[G]}=\mathfrak{c}^V\leq\mathfrak{h}(\mathbb{P})$. Thus, $V[G]\models \omega_1=\mathfrak{h}(\mathbb{P})=\mathfrak{c}$ for every $T_\mathbb{P}$-generic filter $G$.
\end{proof}

\section{Shelah ultrafilters}

As we saw in the previous section, $\mathcal{U}_{gen}$ is forced to be an ultrafilter on $\omega$. However, $\mathcal{U}_{gen}$ satisfies some critical properties which we summarize in the following definition.

\begin{mydef}
Let $\mathcal{U}$ be an ultrafilter on $\omega$. We say that $\mathcal{U}$ is a Shelah ultrafilter if the following holds:
\begin{enumerate}
    \item For every $\alpha<\omega_1$, there is and ideal $\mathcal{I}$ isomorphic to $fin^\alpha$ such that $\mathcal{I}\subseteq\mathcal{U}^*$.
    \item Let $\mathcal{I}$ be an analytic ideal. If $\mathcal{I}\cap\mathcal{U}=\emptyset$, then there are $\alpha<\omega _1$  and an ideal $\mathcal{J}$ such that $\mathcal{J}$ is Kat\v{e}tov equivalent to $fin^\alpha$ and $\mathcal{I}\subseteq\mathcal{J}\subseteq\mathcal{U}^*$.
\end{enumerate}
\end{mydef}

\begin{theorem}
$\mathbb{P}$ forces that $\mathcal{U}_{gen}$ is a Shelah ultrafilter.
\end{theorem}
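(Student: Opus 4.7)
The plan is to verify the two clauses of the definition of Shelah ultrafilter separately, working in the generic extension $V[G]$. Throughout I would use that $\mathbb{P}$ is $\sigma$-closed, and therefore adds no reals: in particular any analytic ideal in $V[G]$ has a Borel code already in $V$ and, as a family of subsets of $\omega$, coincides with the corresponding ideal computed in $V$.

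For clause~(1), given $\alpha<\omega_1$ and a condition $p\in\mathbb{P}$, I would first apply Corollary~\ref{densosuniformes} to pass to a uniform refinement of $p$, and then invoke Lemma~\ref{alphauniform} to refine further to a $\beta$-uniform $q\leq p$ with $\beta\geq\alpha$. By Proposition~\ref{Katetovequivalent}, $\mathcal{F}^{\ast}(q)\cong_K fin^{\beta}$, and Corollary~\ref{copiaisomorfa} then supplies an ideal $\mathcal{I}\subseteq\mathcal{F}^{\ast}(q)$ isomorphic to $fin^{\alpha}$. Since $\mathcal{F}(q)\subseteq\mathcal{U}_{gen}$ whenever $q\in G$, this ideal $\mathcal{I}$ sits inside $\mathcal{U}_{gen}^{\ast}$, and a standard density argument then delivers the same conclusion in every generic extension.

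For clause~(2), the crux is to establish the equivalence
$$ p\Vdash X\notin\mathcal{U}_{gen} \iff X\in\mathcal{F}^{\ast}(p) $$
for every $p\in\mathbb{P}$ and every $X\subseteq\omega$. The backward direction follows from Lemma~\ref{genfilter}: if $X\in\mathcal{F}^{\ast}(p)$ then $\omega\setminus X\in\mathcal{F}(p)\subseteq\mathcal{F}(r)$ for every $r\leq p$, so no $r\leq p$ can satisfy $X\in\mathcal{F}(r)$, which yields $p\Vdash X\notin\mathcal{U}_{gen}$. For the forward direction I would apply the dichotomy of Proposition~\ref{dicotomia conjunto}: if $X\notin\mathcal{F}^{\ast}(p)$ then some Laver subtree $S\in\mathbb{L}(p)$ satisfies $L(S)\subseteq X$; but $S$ itself is a condition, $S\leq p$ via the witness $(S,S)$, and $L(S)\in\mathcal{F}(S)$, so $S\Vdash X\in\mathcal{U}_{gen}$, contradicting $p\Vdash X\notin\mathcal{U}_{gen}$.

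Once this equivalence is in hand, clause~(2) follows immediately. Given an analytic $\mathcal{I}$ in $V[G]$ disjoint from $\mathcal{U}_{gen}$, the truth lemma furnishes a condition $p\in G$ forcing $\mathcal{I}\cap\mathcal{U}_{gen}=\emptyset$; applying the equivalence to each $I\in\mathcal{I}$ gives $\mathcal{I}\subseteq\mathcal{F}^{\ast}(p)$ already in $V$. By genericity pick an $\alpha$-uniform $q\in G$ with $q\leq p$ (available by Corollary~\ref{densosuniformes}) and set $\mathcal{J}=\mathcal{F}^{\ast}(q)$. Lemma~\ref{genfilter} yields $\mathcal{F}^{\ast}(p)\subseteq\mathcal{J}$, hence $\mathcal{I}\subseteq\mathcal{J}$; Proposition~\ref{Katetovequivalent} yields $\mathcal{J}\cong_K fin^{\alpha}$; and $\mathcal{F}(q)\subseteq\mathcal{U}_{gen}$ yields $\mathcal{J}\subseteq\mathcal{U}_{gen}^{\ast}$.

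What might at first look like the main obstacle, namely handling the potentially uncountable family $\mathcal{I}$ by some fusion or amalgamation argument, is sidestepped by the dichotomy of Proposition~\ref{dicotomia conjunto}: the mere existence of a Laver subtree of $p$ lying inside a single set $X$ already exhibits a condition below $p$ that forces $X$ into $\mathcal{U}_{gen}$. Thus the fact that $p$ forces $X$ out of $\mathcal{U}_{gen}$ already commits $X$ to $\mathcal{F}^{\ast}(p)$ in the ground model, and the whole of $\mathcal{I}$ is captured by the single ideal $\mathcal{F}^{\ast}(p)$ at once.
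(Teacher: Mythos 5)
Your proof is correct and follows essentially the same route as the paper: clause (1) via the density of $\beta$-uniform conditions together with Proposition \ref{Katetovequivalent} and Corollary \ref{copiaisomorfa}, and clause (2) via the dichotomy of Proposition \ref{dicotomia conjunto} to push the analytic ideal into $\mathcal{F}^{\ast}(p)$ in the ground model, then passing to a uniform $q\leq p$. Your packaging of the key step as the explicit equivalence $p\Vdash X\notin\mathcal{U}_{gen}$ iff $X\in\mathcal{F}^{\ast}(p)$ is only a (slightly more careful) reformulation of the paper's contradiction argument, not a different method.
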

\begin{proof}
Let $\alpha\leq\omega_1$ and $D_\alpha=\{p\in\mathbb{P}:\exists\beta\geq\alpha (\textit{p is }\beta\textit{-uniform})\}$. It follows from Corollary \ref{densosuniformes} and  Lemma \ref{alphauniform} that $D_\alpha$ is dense. Let $p\in D_\alpha$, then by Proposition \ref{Katetovequivalent}, $\mathcal{F}(p)^*$ is Kat\v{e}tov equivalent to $fin^\beta$ for some $\beta\geq\alpha$. By Corollary \ref{copiaisomorfa}, we have that $\mathcal{F}(p)$ contains an isomorphic copy of $fin^\alpha$. Thus, there exists an ideal $\mathcal{I}$ isomorphic to $fin^\alpha$ such that $\mathcal{I}\subseteq\mathcal{F}(p)$ and  therefore, $p$ forces that $\mathcal{I}\subseteq\mathcal{U}_{gen}^*$.

Let $\Dot{\mathcal{I}}$ be a name for an analytic ideal and $p$ be a condition that forces that $\Dot{\mathcal{I}}\cap\Dot{\mathcal{U}}_{gen}=\emptyset$. Let $I\in V$ and assume that $I\in \mathcal{F}(p)^+$, then there exists $q\leq p$ such that $L(q)\subseteq I$ and therefore $q$ forces $I\in\Dot{\mathcal{U}}_{gen}$, which is a contradiction. Thus, $p$ forces $\Dot{\mathcal{I}}$ to be contained in $\mathcal{F}^(p)^*$.   
Finally, let $q\leq p$ that is $\alpha$ uniform for some $\alpha\in\omega_1$. Then, by Proposition \ref{Katetovequivalent}, $\mathcal{F}^*(q)$ is Kat\v{e}tov equivalent to $fin^\alpha$ and $q$ forces $\Dot{\mathcal{I}}$ to be contained in $\mathcal{F}^*(q)\subseteq\mathcal{U}^*_{gen}$.
\end{proof}

At this point it is worth remembering the definitions of the following special ultrafilters.

\begin{mydef}
Let $\mathcal{U}$ be a non-principal ultrafilter on $\omega$.
\begin{enumerate}[\hspace{0.5cm} (1)]
    \item We say that $\mathcal{U}$ is P-point if for every $\{U_n:n\in\omega\}\subseteq \mathcal{U}$ there exists $U\in\mathcal{U}$ such that $U\subseteq^* U_n$ for every $n\in\omega$.
    \item We say that $\mathcal{U}$ is Q-point if for each partition on $\omega$ into finite sets $\{I_n:n\in\omega\}$, there exists $U\in\mathcal{U}$ such that $|U\cap I_n|\leq 1$ for every $n\in\omega$. 
    \item We say that $\mathcal{U}$ is \emph{Ramsey} if for every $c:[\omega]^2\longrightarrow 2$ there exists $U\in\mathcal{U}$ such that $U$ is monochromatic for $c$, i.e., $c\upharpoonright[U]^2$ is constant.
\end{enumerate}
\end{mydef}

It is well know that an ultrafilter $\mathcal{U}$ is Ramsey if and only if it is a P-point and a Q-point (see \cite{Barty}).  We will see later that Shelah ultrafilters cannot be P-points and therefore, they cannot be Ramsey. However, the definition of  Shelah ultrafilter turns out to be similar to the following importants caracterizations of P-points and Ramsey ultrafilters. 

\begin{theorem}[Mathias \cite{Happyfamilies}]
Let $\mathcal{U}$ be an ultrafilter on $\omega$. Then $\mathcal{U}$ is Ramsey if and only if $\mathcal{U}\cap\mathcal{I}\neq\emptyset$ for every analytic tall ideal $\mathcal{I}.$
\end{theorem}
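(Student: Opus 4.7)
I would prove the two directions separately. The forward direction reduces immediately to a classical partition theorem; the converse is the substantive direction and proceeds by verifying the P-point and Q-point properties.

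\textbf{Forward direction.} Assume $\mathcal{U}$ is Ramsey. The plan is to invoke the Galvin--Prikry-type partition property for Ramsey ultrafilters established by Mathias in \cite{Happyfamilies}: for every analytic $\mathcal{A}\subseteq[\omega]^{\omega}$, there exists $U\in\mathcal{U}$ with $[U]^{\omega}\subseteq\mathcal{A}$ or $[U]^{\omega}\cap\mathcal{A}=\emptyset$. Apply it with $\mathcal{A}=\mathcal{I}\cap[\omega]^{\omega}$. The first alternative immediately gives $U\in\mathcal{U}\cap\mathcal{I}$ (since $U\in[U]^{\omega}$). The second alternative is ruled out by tallness of $\mathcal{I}$: pick $I\in\mathcal{I}$ with $|I\cap U|=\omega$; then $I\cap U\in[U]^{\omega}\cap\mathcal{I}$, contradiction.

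\textbf{Converse.} Assume $\mathcal{U}\cap\mathcal{I}\neq\emptyset$ for every analytic tall ideal $\mathcal{I}$. The plan is to deduce separately that $\mathcal{U}$ is a P-point and a Q-point; Ramseyness then follows from the characterization cited in the paper from \cite{Barty}. To show the P-point property, given a decreasing $\{U_n\}\subseteq\mathcal{U}$, consider the $F_\sigma$ ideal
$$\mathcal{I}_{\{U_n\}}=\{X\subseteq\omega:\exists n\,\,|X\cap U_n|<\omega\}.$$
Any $V\in\mathcal{U}$ satisfies $V\cap U_n\in\mathcal{U}$, hence is infinite, so $V\notin\mathcal{I}_{\{U_n\}}$; that is, $\mathcal{U}\cap\mathcal{I}_{\{U_n\}}=\emptyset$. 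The hypothesis then forces $\mathcal{I}_{\{U_n\}}$ to be non-tall, so there is an infinite $Y\subseteq\omega$ no infinite subset of which belongs to $\mathcal{I}_{\{U_n\}}$. Observing that $Y\setminus U_n\in\mathcal{I}_{\{U_n\}}$ whenever infinite, the set $Y$ must be a pseudo-intersection of $\{U_n\}$. For the Q-point property, given a partition of $\omega$ into finite pieces $\{I_n\}$, the plan is to build an analytic tall ideal $\mathcal{J}$ encoding the failure of selector-hood (e.g.\ via a suitable lower-semicontinuous submeasure derived from the partition), and observe that a witness $A\in\mathcal{U}\cap\mathcal{J}$ can be thinned to a selector in $\mathcal{U}$.

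\textbf{Main obstacle.} The chief subtlety lies in guaranteeing that the witness produced --- the pseudo-intersection $Y$ above, and its Q-point analogue --- actually belongs to $\mathcal{U}$ rather than to an arbitrary element of $\mathcal{P}(\omega)$. I would resolve this by iteration: if $Y\notin\mathcal{U}$, then $\omega\setminus Y\in\mathcal{U}$, and one restarts the argument with the sequence $\{U_n\setminus Y\}\subseteq\mathcal{U}$. The crucial structural fact making this feasible is that the hypothesis on $\mathcal{U}$ descends to restrictions: for each $V\in\mathcal{U}$, the ultrafilter $\mathcal{U}\upharpoonright V$ meets every analytic tall ideal on $V$, because the extension $\mathcal{J}'=\{A\subseteq\omega:A\cap V\in\mathcal{J}\}$ is again analytic and tall on $\omega$. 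Running this process carefully and arguing its termination via a diagonal/maximality argument is the technical heart of the converse direction, and I expect it to be where the bulk of the work lies.
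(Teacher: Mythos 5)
The paper states this theorem without proof --- it is quoted from Mathias's \emph{Happy families} --- so your proposal can only be judged on its own merits. Your forward direction is correct and is the standard derivation: invoke Mathias's analytic partition theorem for Ramsey ultrafilters with $\mathcal{A}=\mathcal{I}\cap[\omega]^{\omega}$, note $U\in[U]^{\omega}$ in the first alternative, and use tallness of $\mathcal{I}$ to rule out the second.

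The converse, however, has a genuine gap, and it sits exactly where you flag the ``main obstacle'': your P-point argument produces its witness on the wrong side of the hypothesis. From $\mathcal{U}\cap\mathcal{I}_{\{U_n\}}=\emptyset$ you deduce non-tallness and extract a pseudo-intersection $Y$, but $Y$ need not lie in $\mathcal{U}$, and neither the restart with $U_n\setminus Y$ nor the restriction trick closes this. The restriction argument only shows that \emph{every} $V\in\mathcal{U}$ contains a pseudo-intersection of $\{U_n\}$, and this density statement does not imply that some pseudo-intersection belongs to $\mathcal{U}$ --- it is precisely the assertion that $\mathcal{U}$ fails to reap the family of pseudo-intersections, which is compatible with $\mathcal{U}$ not being a P-point --- while the transfinite iteration comes with no termination argument, and none is apparent. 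The repair is to orient the argument so the hypothesis hands you a set \emph{inside} $\mathcal{U}$: set $A_0=\omega\setminus U_0$ and $A_{n+1}=U_n\setminus U_{n+1}$ (no piece is in $\mathcal{U}$), and let $\mathcal{J}$ be the ideal generated by $\{A_n:n\in\omega\}$ together with $\{X\subseteq\omega:\forall n\,|X\cap A_n|<\omega\}$. This $\mathcal{J}$ is Borel and \emph{always} tall (an infinite set either meets some piece infinitely or contains an infinite partial selector), so the hypothesis yields $X\in\mathcal{U}\cap\mathcal{J}$; writing $X\subseteq A_{n_1}\cup\cdots\cup A_{n_k}\cup Y$ with $Y$ meeting every piece finitely, the finite union of pieces is not in $\mathcal{U}$, hence $X\setminus(A_{n_1}\cup\cdots\cup A_{n_k})\in\mathcal{U}$ is contained in $Y$ and is a pseudo-intersection modulo finite --- no iteration needed. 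The same orientation makes your Q-point sketch precise: for a partition $\{I_n\}$ into finite sets, the $F_\sigma$ ideal generated by partial selectors (all $X$ with $\sup_n|X\cap I_n|<\infty$) is tall, so some $X\in\mathcal{U}$ has $|X\cap I_n|\leq k$ for all $n$; splitting $X$ into $k$ selectors, one of them lies in $\mathcal{U}$ since $\mathcal{U}$ is an ultrafilter. With these two replacements your overall architecture (P-point plus Q-point implies Ramsey, via the characterization in \cite{Barty}) is sound.
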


\begin{theorem}[Zapletal \cite{PreservingP-points}]
Let $\mathcal{U}$ be an ultrafilter on $\omega$. The following are equivalent:
\begin{enumerate}
    \item $\mathcal{U}$ is P-point.
    \item If $\mathcal{I}$ is an analytic ideal such that $\mathcal{U}\cap\mathcal{I}=\emptyset$, then there exists $\mathcal{J}$ a $F_\sigma$ ideal such that $\mathcal{I}\subseteq\mathcal{J}$ and $\mathcal{J}\cap \mathcal{U}=\emptyset$.
\end{enumerate}
\end{theorem}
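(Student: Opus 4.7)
The plan is to prove the two implications separately. Direction (2)$\Rightarrow$(1) is a direct pseudo-intersection argument combined with Mazur's characterization of $F_\sigma$ ideals as ideals of finiteness of lower semicontinuous submeasures; direction (1)$\Rightarrow$(2) is the substantive content and requires a delicate construction in which the P-point hypothesis is used essentially.

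For (2)$\Rightarrow$(1) I argue by contrapositive. Suppose $\mathcal{U}$ is not a P-point and fix a decreasing sequence $U_0\supseteq U_1\supseteq\cdots$ of elements of $\mathcal{U}$ with no pseudo-intersection in $\mathcal{U}$. Setting $A_n=\omega\setminus U_n$, define
\begin{equation*}
\mathcal{I}=\{X\subseteq\omega:\forall n\,(|X\cap A_n|<\omega)\}.
\end{equation*}
This is a Borel ideal (the intersection over $n$ of the $F_\sigma$ sets $\{X:|X\cap A_n|<\omega\}$) containing the finite sets, and by the choice of the sequence $\mathcal{I}\cap\mathcal{U}=\emptyset$. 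Assuming (2), fix an $F_\sigma$ ideal $\mathcal{J}\supseteq\mathcal{I}$ with $\mathcal{J}\cap\mathcal{U}=\emptyset$; by Mazur's theorem write $\mathcal{J}=\{X:\psi(X)<\infty\}$ for a lower semicontinuous submeasure $\psi$. Since each $U_i\in\mathcal{U}$ satisfies $\psi(U_i)=\infty$, by lower semicontinuity there is a finite $F_i\subseteq U_i$ with $\psi(F_i)>i$. The set $X=\bigcup_i F_i$ then satisfies $X\cap A_n\subseteq\bigcup_{i<n} F_i$, since $F_i\subseteq U_i\subseteq U_n$ for $i\geq n$; hence $X\in\mathcal{I}\subseteq\mathcal{J}$. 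But monotonicity of $\psi$ gives $\psi(X)\geq\psi(F_i)>i$ for every $i$, so $\psi(X)=\infty$, contradicting $X\in\mathcal{J}$.

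For (1)$\Rightarrow$(2) let $\mathcal{U}$ be a P-point and $\mathcal{I}$ an analytic ideal with $\mathcal{I}\cap\mathcal{U}=\emptyset$. The target is a lower semicontinuous submeasure $\psi$ such that $\mathcal{I}\subseteq\mathrm{Fin}(\psi):=\{X:\psi(X)<\infty\}$ and $\mathcal{U}\cap\mathrm{Fin}(\psi)=\emptyset$; the required $\mathcal{J}=\mathrm{Fin}(\psi)$ is then $F_\sigma$ by Mazur. I would fix a Souslin representation $\mathcal{I}=\pi[F]$ with $F\subseteq 2^\omega\times\omega^\omega$ closed, so that for each $s\in\omega^{<\omega}$ the projection $\mathcal{I}_s=\pi[F\cap(2^\omega\times[s])]$ is closed and hereditary, and $\mathcal{I}=\bigcup_x\bigcap_n\mathcal{I}_{x\upharpoonright n}$. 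The submeasure $\psi$ is then built by recursion on $s$, at each step diagonalizing against the countably many threats — the $\mathcal{U}$-sets at risk of being captured — using the P-point property of $\mathcal{U}$ to pseudo-intersect them into a single $\mathcal{U}$-set that the construction is required to keep outside $\mathrm{Fin}(\psi)$.

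The main obstacle is precisely this recursive construction in (1)$\Rightarrow$(2): one must simultaneously ensure that $\psi$ is finite on every $A\in\mathcal{I}$ while blowing up on every $A\in\mathcal{U}$, and the P-point hypothesis is what allows the countable branching in the Souslin representation to be handled coherently. I would follow Zapletal's original argument in \cite{PreservingP-points} for this technical core, which exploits the fact that a P-point can absorb countably many near-misses into a single witness through its pseudo-intersection property; without this uniformity, the submeasure either fails to extend $\mathcal{I}$ or accidentally puts a $\mathcal{U}$-set into $\mathrm{Fin}(\psi)$, and the (2)$\Rightarrow$(1) direction shows that this failure is unavoidable for non-P-points.
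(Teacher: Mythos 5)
The paper does not actually prove this theorem: it is quoted as a known result of Zapletal, imported verbatim from \cite{PreservingP-points}, so there is no internal proof to compare against and the question is whether your attempt is a proof at all. Your direction (2)$\Rightarrow$(1) is correct and complete. The ideal $\mathcal{I}=\{X:\forall n\,(|X\cap A_n|<\omega)\}$ is indeed Borel (an $F_{\sigma\delta}$ set, hence analytic), it contains all finite sets (so Mazur's theorem applies to any $F_\sigma$ ideal extending it), it is disjoint from $\mathcal{U}$ precisely because the $U_n$ have no pseudo-intersection in $\mathcal{U}$, and the finite-subset argument via lower semicontinuity ($F_i\subseteq U_i$ with $\psi(F_i)>i$, then $X=\bigcup_i F_i\in\mathcal{I}$ with $\psi(X)=\infty$) is airtight; the inclusion $X\cap A_n\subseteq\bigcup_{i<n}F_i$ is checked correctly.

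The genuine gap is that (1)$\Rightarrow$(2), which you yourself identify as the substantive content, is not proved: you describe the shape of a construction (Souslin representation, recursive definition of a lower semicontinuous submeasure, P-point diagonalization against countably many threats) and then explicitly state that you ``would follow Zapletal's original argument'' for the technical core. That is a citation, not a proof, and the final sentence about the failure being ``unavoidable for non-P-points'' is rhetoric rather than mathematics. Moreover, the one concrete structural claim your sketch does make is false: for closed $F\subseteq 2^\omega\times\omega^\omega$, the projection $\mathcal{I}_s=\pi[F\cap(2^\omega\times[s])]$ is in general only analytic, not closed, because $[s]\cong\omega^\omega$ is not compact and projections along a noncompact factor do not preserve closedness --- this is exactly why the analytic sets form a strictly wider class than the closed ones. (The identity $\mathcal{I}=\bigcup_x\bigcap_n\mathcal{I}_{x\upharpoonright n}$ does hold, since witnesses $y_n\in[x\upharpoonright n]$ converge to $x$ and $F$ is closed, but closedness and hereditariness of the individual $\mathcal{I}_s$ would need separate repair before any recursion on $s$ could be run.) So as written, the proposal establishes only the easy implication; the equivalence itself still rests entirely on the external reference --- which, to be fair, is precisely how the paper under review treats it as well.
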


For the rest of the paper we will study properties of $\mathcal{U}_{gen}$ and Shelah ultrafilters.

Recall that $\textit{Sacks forcing}$ (denoted by $\mathbb{S}$) is the set of perfect subtrees of $(2^{<\omega},\subseteq)$ ordered by inclusion. This forcing adds a generic real defined as $g = \bigcap\{[p]:p\in G\}$ where $G$ is the generic filter on $\mathbb{S}$ (see \cite{Sacks}). In \cite{YiparakiThesis}, Yiparaki defined a special type of reaping families\footnote{Recall that a set $\mathcal{R}\subseteq[\omega]^\omega$ is a \emph{reaping family} if for every $A\in[\omega]^\omega$ there exists $R\in\mathcal{R}$ such that either $R\subseteq A$ or $R\cap A=\emptyset.$} that are essential to characterize the indestructibility of filters\footnote{Given a forcing notion $\mathbb{P}$, an ultrafilter $\mathcal{U}$ is $\mathbb{P}$-indestructible if $\mathcal{U}$ generates an ultrafilter in every extension by $\mathbb{P}.$} by Sacks forcing.

\begin{mydef}[Yiparaki \cite{YiparakiThesis}]
A family $\mathcal{R}\subseteq [\omega]^{\omega}$ is called a Halpern-L\"auchli if for every $c: 2^{<\omega}\rightarrow 2$ there are $p\in\mathbb{S}$ and $A\in \mathcal{R}$ such that $c$ is constant on $p\upharpoonright A$.
\end{mydef}

The importance of Halpern-L\"auchli families lies on the following result:

\begin{theorem} (see \cite{HLideals})
Let $\mathcal{U}$ be an ultrafilter on $\omega$. Then $\mathcal{U}$ is Sacks indestructible if and only if it is a Halpern–Läuchli family. 
\end{theorem}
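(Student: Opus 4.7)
The plan is to prove the two directions separately. For $(\Rightarrow)$, assume $\mathcal{U}$ is Sacks indestructible and fix a coloring $c:2^{<\omega}\to 2$. Let $\dot g$ denote the canonical name for the Sacks generic branch of $2^\omega$, and consider the name $\dot X=\{n\in\omega:c(\dot g\upharpoonright n)=1\}$. Since $\mathcal{U}$ remains an ultrafilter in the extension, a density argument in $V$ yields $p\in\mathbb{S}$ and $A\in\mathcal{U}$ such that $p\Vdash A\subseteq\dot X$ or $p\Vdash A\cap\dot X=\emptyset$; say the former. For any $n\in A$ and any $s\in p\cap 2^n$, the sub-condition $p_s=\{t\in p:t\subseteq s\text{ or }s\subseteq t\}$ lies below $p$ and forces $\dot g\upharpoonright n=s$, so $p_s\Vdash c(s)=c(\dot g\upharpoonright n)=1$; hence $c(s)=1$ already holds in $V$. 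Therefore $c$ is constantly $1$ on $p\upharpoonright A$, witnessing the Halpern-L\"auchli property of $\mathcal{U}$; the symmetric case gives constant value $0$.

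For $(\Leftarrow)$, assume $\mathcal{U}$ is Halpern-L\"auchli, and fix a Sacks name $\dot X$ for a subset of $\omega$ together with a condition $p_0\in\mathbb{S}$. A standard fusion for Sacks forcing produces $p\leq p_0$ and an increasing sequence $\langle k_n:n\in\omega\rangle$ such that the $n$-th splitting level of $p$ consists entirely of nodes of length $k_n$, and such that for every $s$ in this level the sub-condition $p_s$ decides the statement $n\in\dot X$. Let $\phi:2^{<\omega}\to p$ be the tree isomorphism sending level $n$ of $2^{<\omega}$ bijectively onto the $n$-th splitting level of $p$, and define $c:2^{<\omega}\to 2$ by $c(t)=1$ iff $p_{\phi(t)}\Vdash|t|\in\dot X$. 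Apply the Halpern-L\"auchli property of $\mathcal{U}$ to obtain $r\in\mathbb{S}$ and $A\in\mathcal{U}$ with $c$ constant on $r\upharpoonright A$, say with constant value $i\in\{0,1\}$. Pull $r$ back through $\phi$ to a Sacks condition $q\leq p$ whose splitting levels are exactly $\phi[r\cap 2^n]$ for each $n$; since every branch of $q$ passes through a node $\phi(t)$ with $t\in r\cap 2^n$ for each $n\in A$, and each such node forces $n\in\dot X$ if $i=1$ or $n\notin\dot X$ if $i=0$, we conclude $q\Vdash A\subseteq\dot X$ or $q\Vdash A\cap\dot X=\emptyset$ respectively, establishing Sacks indestructibility.

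The main obstacle lies in the fusion step of $(\Leftarrow)$: one must simultaneously prescribe the heights of the splitting levels of $p$, arrange that each sub-condition attached to a splitting node decides the corresponding membership statement in $\dot X$, and preserve perfectness of the tree. A secondary technicality is the pullback at the end, namely verifying that the perfect subtree $r$ of $2^{<\omega}$ produced by Halpern-L\"auchli transfers through $\phi$ to a genuine perfect subtree of $p$, and that the coloring value $i$ translates coherently into the desired forcing statement about $\dot X$.
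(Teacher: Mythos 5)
The paper does not actually prove this statement; it is quoted directly from \cite{HLideals}, so there is no in-paper argument to compare against. Your reconstruction is the standard proof of this equivalence and is essentially correct. The $(\Rightarrow)$ direction is clean: reading the colouring along the generic branch via $\dot X=\{n:c(\dot g\upharpoonright n)=1\}$, using that $\mathcal{U}$ still generates an ultrafilter to get $p$ and $A\in\mathcal{U}$ with $p$ deciding $A$ into or out of $\dot X$, and then noting that $p_s\Vdash \dot g\upharpoonright n=\check s$ for $s\in p\cap 2^n$ pulls the forced colour value back to the ground model, exactly yields $c$ constant on $p\upharpoonright A$. The skeleton of $(\Leftarrow)$ --- fuse so that each node $s$ of the $n$-th splitting level has $p_s$ deciding ``$n\in\dot X$'', transfer the decision pattern to a colouring of $2^{<\omega}$ via the canonical isomorphism $\phi$ onto the splitting structure, apply the Halpern--L\"auchli property, and pull $(r,A)$ back to $q\leq p$ --- is also the right argument, and the final predensity step ($q$ is covered by the cones $q_{\phi(t)}$, $t\in r\cap 2^n$, each forcing the same decision about $n$) is sound.

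There is, however, one intermediate claim in $(\Leftarrow)$ that is false as stated and should simply be deleted: you cannot in general arrange that ``the $n$-th splitting level of $p$ consists entirely of nodes of length $k_n$''. In Sacks forcing $q\leq p_0$ means $q\subseteq p_0$, so every splitting node of $q$ is already a splitting node of $p_0$; taking $p_0$ to be a perfect tree whose pairwise incomparable splitting nodes have pairwise distinct lengths (easily built recursively) shows that no condition below $p_0$ can have height-uniform splitting levels. Fortunately your argument never uses this clause: the colouring $c(t)$ is indexed by the splitting rank $|t|=n$ of $\phi(t)$, not by any height $k_n$, and the conclusion only needs that every branch of $q$ meets $\phi[r\cap 2^n]$ for each $n\in A$. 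Relatedly, ``whose splitting levels are exactly $\phi[r\cap 2^n]$'' is inaccurate --- $r$ need not split at level $n$, so $\phi[r\cap 2^n]$ is merely a front of $q$ (a set met by every branch), with the splitting nodes of $q$ being the $\phi$-images of splitting nodes of $r$ --- but again only the front property is used. With the uniform-height clause removed and ``splitting levels'' replaced by ``fronts'' in the pullback, the proof is correct.
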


If $\mathcal{I}$ is an ideal, we will say that $\mathcal{I}$ is HL as a shortcut for the statement that $\mathcal{I}^+$ is  a Halpern–Läuchli family. It is easy to see that if $\mathcal{I,J}$ are ideals such that $\mathcal{I}\leq_K\mathcal{J}$ and $\mathcal{J}$ is HL, then $\mathcal{I}$ is HL. Given $c:2^{<\omega}\longrightarrow 2$, for each $p\in\mathbb{S}$ define $$H_c(p)=\{n\in\omega:\textit{c is constant on }p\upharpoonright\{n\}\}.$$ Let $\mathcal{I}_c$ be the (possibly improper) ideal generated by $\{H_c(p):p\in\mathbb{S}\}.$ It is easy to see that a set $\mathcal{R}$ is not a Halpern-L\"auchli family if and only if there exists $c:2^{<\omega}\longrightarrow 2$ such that $\mathcal{I}_c\subseteq\mathcal{R}$.

\begin{lemma}[\cite{GruffandScat}]
$fin^{\alpha}$ is an HL ideal for every $\alpha<\omega_1$.
\end{lemma}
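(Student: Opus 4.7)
The plan is to proceed by transfinite induction on $\alpha<\omega_1$, exploiting the Fubini-style recursion defining $fin^\alpha$. The base case $\alpha=1$ is precisely the classical one-dimensional Halpern--L\"auchli theorem: for every $c:2^{<\omega}\to 2$ there exist a perfect subtree $p\subseteq 2^{<\omega}$ and an infinite $A\subseteq\omega$ on which $c$ is constantly valued along each level of $p$ indexed by $A$.

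For the successor step $\alpha=\gamma+1$, I would use that $fin^{\gamma+1}=fin\times fin^\gamma$ and identify $X_{\gamma+1}\cong\omega$ via a partition $\omega=\bigsqcup_{n\in\omega}B_n$ where each $B_n$ is infinite and corresponds to a copy of $X_\gamma$. Given $c:2^{<\omega}\to 2$, I would perform a fusion producing perfect trees $2^{<\omega}\supseteq p_0\supseteq p_1\supseteq\cdots$ and sets $A_n\subseteq B_n$ as follows. At stage $n$, transport the restriction of $c$ to the nodes of $p_{n-1}$ at levels in $B_n$ to a coloring of $2^{<\omega}$ via the canonical isomorphism between the splitting skeleton of $p_{n-1}$ and $2^{<\omega}$, and apply the inductive hypothesis for $fin^\gamma$ to extract $p_n\subseteq p_{n-1}$ perfect and $A_n\subseteq B_n$ with $A_n\in(fin^\gamma)^+$ on which $c$ is constantly $\epsilon_n\in\{0,1\}$. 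Carrying out standard Sacks-style fusion (freezing progressively more splitting nodes as $n$ grows) guarantees that $p=\bigcap_{n}p_n$ is a perfect tree; since $p\subseteq p_n$, the coloring $c$ remains constantly $\epsilon_n$ on $p\upharpoonright A_n$ for every $n$. Pigeonhole then yields an infinite $N\subseteq\omega$ and $\epsilon\in\{0,1\}$ with $\epsilon_n=\epsilon$ for $n\in N$, so $A=\bigcup_{n\in N}A_n$ satisfies $A\cap B_n=A_n\in(fin^\gamma)^+$ for each $n\in N$, whence $A\in(fin^{\gamma+1})^+$ and $c$ is constantly $\epsilon$ on $p\upharpoonright A$.

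For a limit $\alpha$, I would fix a strictly increasing sequence $(\beta_n)_{n\in\omega}$ cofinal in $\alpha$ and identify $X_\alpha=\bigsqcup_{\beta<\alpha}(\{\beta\}\times X_\beta)$ with $\omega$ via a partition in which an infinite $B_n\subseteq\omega$ corresponds to $\{\beta_n\}\times X_{\beta_n}$ (the remaining countably many fibers being enumerated arbitrarily). I would repeat the fusion verbatim, invoking at stage $n$ the inductive hypothesis for $fin^{\beta_n}$ in place of $fin^\gamma$; after pigeonholing, $A=\bigcup_{n\in N}A_n$ is $fin^\alpha$-positive because the set $\{\beta<\alpha:A\cap(\{\beta\}\times X_\beta)\in(fin^\beta)^+\}$ contains the cofinal set $\{\beta_n:n\in N\}$. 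The principal technical obstacle lies in managing the fusion carefully: one must ensure that $\bigcap_{n}p_n$ is genuinely a perfect subtree of $2^{<\omega}$ (via the standard device of freezing progressively more splitting nodes), and one must be able to apply the inductive hypothesis---stated for colorings of $2^{<\omega}$---inside an arbitrary perfect subtree, which is handled by the canonical order-isomorphism of any perfect tree with $2^{<\omega}$ obtained by passing to its splitting skeleton.
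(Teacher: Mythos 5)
You are proving a statement that the paper itself does not prove (it is quoted from the reference cited as \cite{GruffandScat}), so your attempt can only be judged on its own merits; and on its own merits the induction scheme is reasonable in outline (the base case, the decomposition of $X_{\gamma+1}$ into fibers $B_n$, the cofinal-sequence treatment of limits, and the final pigeonhole-plus-positivity computation are all fine), but the crucial stage-$n$ step of the fusion is not justified by the inductive hypothesis, and this is a genuine gap rather than a routine omission. Once the fusion has frozen a front of two or more splitting nodes, every later $p_n$ must contain all the cones above that front, and the statement ``$c$ is constantly $\epsilon_n$ on $p_n\upharpoonright A_n$'' demands a \emph{single} color, at every level of $A_n$, \emph{across all of these cones simultaneously}. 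Your inductive hypothesis is a one-tree statement: applied inside one cone of $p_{n-1}$ it says nothing about the other cones, and applied cone-by-cone it produces cone-dependent positive sets $A_n^i$ --- which cannot be merged, since $fin^\gamma$-positive sets are not closed under finite intersection (already for $fin$ two infinite sets can have finite intersection) --- together with cone-dependent colors $\epsilon_n^i$ that no pigeonhole can reconcile, because the frozen front forces you to keep every cone. This simultaneity over several trees with one common level set is exactly the content of the multidimensional Halpern--L\"auchli theorem, and even that theorem only yields a color \emph{per tree}; getting constancy across the cones (for instance by first canonizing so that the color depends only on the level on a suitable subtree and positive level set, and then using that a positive set splits into a positive monochromatic piece because $fin^\gamma$ is an ideal) is the real combinatorial core of the result, and it is entirely absent from your write-up, hidden inside the phrase ``carrying out standard Sacks-style fusion.''

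There is a second, smaller but still genuine, gap in the transport step. The inductive hypothesis, applied to the coloring transported through the splitting skeleton of $p_{n-1}$, returns a set $A'\subseteq\omega$ that is $fin^\gamma$-positive with respect to the \emph{canonical} indexing of the levels of $2^{<\omega}$; pulled back, this becomes $\{b_k: k\in A'\}$, where $b_k$ is the $k$-th splitting level of the pruned tree. For this image to be $fin^\gamma$-positive inside the \emph{prescribed} copy $B_n\cong X_\gamma$ of the fiber, you must choose the splitting levels in advance along an increasing injection $\omega\to B_n$ that induces a positivity-preserving embedding of the canonical copy of $X_\gamma$ into $B_n$. Such an embedding does exist --- each fiber of $B_n$ is a cofinal subset of $\omega$, so one can choose greedily, by recursion on $\gamma$ --- but an arbitrary increasing enumeration of splitting levels inside $B_n$ will in general send positive sets to non-positive ones (already for $\gamma=2$, where positivity means infinitely many infinite columns), so this alignment requires an explicit argument and is not delivered by ``the canonical order-isomorphism of any perfect tree with $2^{<\omega}$.'' In summary: the skeleton of your transfinite induction is plausible, but both the across-cones monochromaticity in the fusion and the positivity transfer through the skeleton identification are missing ideas, the first of them being where the actual strength of the theorem lies.
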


\begin{cor}
Shelah ultrafilters are $\mathbb{S}$-indestructible.
\end{cor}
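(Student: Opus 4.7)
The plan is to argue by contradiction, exploiting the Halpern--L\"auchli characterization of Sacks indestructibility just quoted. Suppose $\mathcal{U}$ is a Shelah ultrafilter that is not $\mathbb{S}$-indestructible. Then $\mathcal{U}$ is not an HL family, so by the observation made just before the preceding lemma there exists a coloring $c:2^{<\omega}\to 2$ such that $\mathcal{I}_c\cap\mathcal{U}=\emptyset$. In particular, since $\omega\in\mathcal{U}$, we have $\omega\notin\mathcal{I}_c$, so $\mathcal{I}_c$ is a proper ideal.

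The first real step is to verify that $\mathcal{I}_c$ is an analytic ideal. The map $p\mapsto H_c(p)$ is Borel from the Polish space $\mathbb{S}$ of perfect subtrees of $2^{<\omega}$ into $\mathcal{P}(\omega)\cong 2^{\omega}$, so the generating family $\{H_c(p):p\in\mathbb{S}\}$ is analytic; closing under subsets and finite unions keeps the resulting ideal analytic as a subset of $2^{\omega}$.

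With $\mathcal{I}_c$ identified as an analytic ideal disjoint from $\mathcal{U}$, I apply clause (2) of the definition of Shelah ultrafilter: this yields an $\alpha<\omega_1$ and an ideal $\mathcal{J}\cong_K fin^{\alpha}$ with $\mathcal{I}_c\subseteq\mathcal{J}\subseteq\mathcal{U}^{\ast}$. By the preceding lemma from \cite{GruffandScat}, $fin^{\alpha}$ is HL, and since HL is preserved downward under Kat\v{e}tov reduction (as noted in the text), the relation $\mathcal{J}\leq_K fin^{\alpha}$ forces $\mathcal{J}$ itself to be HL.

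To close the argument, I apply HL-ness of $\mathcal{J}$ to the very same coloring $c$: there must exist $p\in\mathbb{S}$ and $A\in\mathcal{J}^{+}$ with $c$ constant on $p\upharpoonright A$, i.e., $A\subseteq H_c(p)$. But $H_c(p)\in\mathcal{I}_c\subseteq\mathcal{J}$, so by downward closure $A\in\mathcal{J}$, contradicting $A\in\mathcal{J}^{+}$. The only step that is not a direct unwinding of definitions is checking the analyticity of $\mathcal{I}_c$, which is a standard descriptive-set-theoretic observation; the genuine mathematical content sits in property (2) of Shelah ultrafilters and in the HL-ness of the ideals $fin^{\alpha}$.
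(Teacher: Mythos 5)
Your proof is correct and takes essentially the same approach as the paper: both reduce $\mathbb{S}$-indestructibility to the Halpern--L\"auchli property, apply clause (2) of the definition of Shelah ultrafilter to the analytic ideal $\mathcal{I}_c$, and derive a contradiction from the HL-ness of $fin^{\alpha}$ together with downward preservation of HL under the Kat\v{e}tov order. The only differences are cosmetic: you explicitly verify analyticity of $\mathcal{I}_c$ (which the paper leaves implicit) and you land the final contradiction by applying HL-ness of $\mathcal{J}$ directly to $c$, whereas the paper concludes that $\mathcal{I}_c$ itself would be HL.
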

\begin{proof}
Let $\mathcal{U}$ be a Shelah ultrafilter and  suppose that there exists $c:2^{<\omega}\longrightarrow 2$ such that $\mathcal{I}_c\subseteq\mathcal{U}^*.$ Then by Definition there are $\alpha<\omega _1$  and an ideal $\mathcal{J}$ such that $\mathcal{J}$ is Kat\v{e}tov equivalent to $fin^\alpha$ and $\mathcal{I}_c\subseteq\mathcal{J}\subseteq\mathcal{U}^*$. Since $fin^\alpha$ is HL and $\mathcal{I}_c\leq_K fin^\alpha$, then $\mathcal{I_c}$ is HL, which is a contradiction. 
\end{proof}

Recall that in the usual monochromatic Ramsey theory one is given a coloring $c:[\omega]^n\longrightarrow\omega$ and seeks a set $A\in[\omega]^\omega$ which is \emph{monochromatic} for $c$, i.e., there is a single color which all elements of $[A]^n$ receive. In the polychromatic Ramsey theory we instead seek a set $B\in[\omega]^\omega$ which is \emph{polychromatic} for $c$, i.e., each member of $[B]^n$ receives a different color. Of course, in order to find monochromatic or polychromatic sets we need to put some restrictions on the colorings considered. Thus, when we are talking about polychromatic Ramsey theory we only consider colorings in which every color is used a finite number of times. To learn more about  polychromatic Ramsey theory the reader can consult \cite{PolychromaticRamsey} and \cite{COMPARISONSOFPOLYCHROMATICANDMONOCHROMATIC}.

\begin{mydef}
Let $c:[\omega]^2\longrightarrow\omega$ and $k\in\omega$. We say that $c$ is $k$-bounded if $|c^{-1}[n]|\leq k$ for every $n\in\omega$.
\end{mydef}

As we mentioned before, an ultrafilter
$\mathcal{U}$ is Ramsey if for every $c:[\omega]^2\longrightarrow 2$ there exists $A\in\mathcal{U}$ such that $A$ is monochromatic for $c.$ The \textquotedblleft polychromatic version\textquotedblright\ of Ramsey ultrafilter was introduced by Palumbo in \cite{COMPARISONSOFPOLYCHROMATICANDMONOCHROMATIC} and it is called \emph{rainbow Ramsey ultrafilter}. 

\begin{mydef}[Palumbo]
A nonprincipal ultrafilter $\mathcal{U}$ is rainbow Ramsey if for every 2-bounded coloring $c:[\omega]^2\longrightarrow\omega$ there is an $A\in\mathcal{U}$ polychromatic for $c$. 
\end{mydef}

\begin{prop}
Let $1<\alpha<\omega_1$, $T\in\mathbb{W}$ that is $\alpha$-uniform, $k\in\omega$ and $c:[L(T)]^2\longrightarrow\omega$ that is a k-bounded coloring. Then there is $S\in\mathbb{L}(T)$ such that $L(S)$ is polychromatic for $c$.
\end{prop}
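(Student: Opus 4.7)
The plan is to argue by induction on $\alpha\geq 2$. The single external ingredient is the polychromatic (rainbow) Ramsey theorem on $\omega$: every $k$-bounded coloring $c\colon[\omega]^2\to\omega$ admits an infinite $A\subseteq\omega$ with $c\upharpoonright[A]^2$ injective. This is immediate from the Erdős--Rado canonical partition theorem: under $k$-boundedness the constant, $\min$-determined, and $\max$-determined canonical types each force some color class to be infinite, contradicting $k$-boundedness, so only the injective canonical type survives.

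For the base case $\alpha=2$, enumerate $succ_T(\omega)=\{s_n:n\in\omega\}$ and write $L_n=succ_T(s_n)$. Apply the polychromatic Ramsey theorem inside each $L_n$ to obtain an infinite $B_n\subseteq L_n$ with $c\upharpoonright[B_n]^2$ injective. Now build $S$ by fusion: choose indices $n_0<n_1<\cdots$ and, inside each selected $B_{n_i}$, pick leaves $l^i_0,l^i_1,\dots$ one at a time according to a fair schedule in which each opened branch gets extended infinitely often and infinitely many branches are opened. Maintain the invariant that the finite accumulated set $F_m$ is polychromatic for $c$. At the next stage, given the branch prescribed by the schedule, a new leaf $a$ is inserted so that $F_m\cup\{a\}$ stays polychromatic.

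The two constraints on $a$ behave very differently. The constraint ``$c(\{a,f\})\notin c[[F_m]^2]$ for every $f\in F_m$'' excludes only finitely many $a$ per branch, as a direct count using $k$-boundedness shows. The subtler constraint ``$c(\{a,f_1\})\neq c(\{a,f_2\})$ for distinct $f_1,f_2\in F_m$'' is not in general cofinite, and is where the main obstacle lies: for a fixed pair $\{f_1,f_2\}\subseteq F_m$ the set $\{x\in L_n:c(\{x,f_1\})=c(\{x,f_2\})\}$ can be infinite, and even cofinite in $L_n$, for infinitely many $n$. The resolution is not to prune away bad elements within each branch, but to discard candidate \emph{branches}, together with a careful choice of $F_m$ in the first place. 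Specifically, one strengthens the invariant: when a leaf $f$ is added to $F_m$, one requires in addition that, for every earlier $f'\in F_m$, the induced auxiliary coloring $x\mapsto(c(\{x,f\}),c(\{x,f'\}))$ has its ``diagonal fibre'' finite inside all but finitely many $B_n$. That this strengthened invariant can be maintained is the technical heart of the argument, and follows by applying canonical Ramsey to the auxiliary colorings and a double counting with global $k$-boundedness. Granting it, at each stage only finitely many branches are lost, the fair schedule survives, and the fusion produces the desired $S\in\mathbb{L}(T)$ with root $\omega$, root-successors $\{s_{n_i}:i\in\omega\}$, and polychromatic leaves.

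For the inductive step $\alpha>2$, each root-successor $s_n$ of $T$ is $\alpha_n$-uniform for some $\alpha_n<\alpha$; apply the inductive hypothesis to $T_{s_n}$ whenever $\alpha_n\geq 2$ and apply ordinary polychromatic Ramsey to $L(T_{s_n})$ when $\alpha_n=1$. Either way one obtains $S_n\in\mathbb{L}(T_{s_n})$ with $L(S_n)$ polychromatic, and the fusion from the base case is rerun verbatim with $\{L(S_n)\}_{n\in\omega}$ replacing $\{B_n\}$. The technical step controlling cross-branch coincidences is the same, so the conclusion follows, yielding $S\in\mathbb{L}(T)$ with $L(S)$ polychromatic.
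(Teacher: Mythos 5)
Your route is genuinely different from the paper's, so let me first record the contrast: the paper does not induct on $\alpha$ and never invokes the canonical Ramsey theorem. It enumerates \emph{all} $1$-uniform nodes $\{s_n:n\in\omega\}$ of $T$, greedily grows finite polychromatic sets $F_n$ by adding at stage $n+1$ one fresh successor of each $s_j$ with $j\le n+1$, and takes the downward closure; the extension step is justified there only by the observation that $c$ is $k$-bounded and each $succ_T(s_j)$ is infinite. That count does handle the constraint ``avoid colors already used on $[F_n]^2$'' (at most $k$ candidates realize any fixed color), and your sharpest observation is exactly right: it does \emph{not} handle the binary constraint. Indeed, a $2$-bounded coloring can make $\{x: c(\{x,f_1\})=c(\{x,f_2\})\}$ infinite, even cofinite in a branch, by giving each colliding $x$ a fresh color whose class is precisely the two pairs $\{x,f_1\},\{x,f_2\}$. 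So you have correctly identified the delicate point, which the paper's own justification passes over in one line.

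However, your proposed resolution of that point is where your argument breaks, and since you explicitly defer the ``technical heart'' to it, this is a genuine gap rather than an omitted routine verification. Your strengthened invariant --- for each pair $f,f'\in F_m$ the diagonal fibre $\{x: c(\{x,f\})=c(\{x,f'\})\}$ is finite in all but finitely many $B_n$, maintained by discarding only finitely many \emph{branches} and, as you insist, without pruning inside branches --- cannot in general be maintained. Enumerate the leaves as $e_0,e_1,\dots$ and let each $e_j$ act as apex for the pairs $\{e_j,e_i\}$ with $i<j$, matching earlier leaves from distinct branches into color classes of size two; by diagonalizing over triples one arranges a $2$-bounded coloring in which every cross-branch pair $\{f,f'\}$ has its diagonal fibre infinite in \emph{every} branch while remaining coinfinite there, and each $[L_n]^2$ is already rainbow. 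Against this coloring your invariant is violated by every admissible choice at the very first extension, yet the proposition holds for it, witnessed by the move you reject: shrink the within-branch candidate sets to avoid the (coinfinite) diagonal sets of the pairs already placed in $F$, discarding a branch only when such a diagonal is cofinite in it. That this never strands the construction is then a consequence of a counting lemma of the flavor you gesture at but do not prove: if $a_1,\dots,a_r$ all satisfy $c(\{x,a_i\})=c(\{x,f\})$ for a common $x$ and a common $f\in F$, the class of $c(\{x,f\})$ has $r+1$ members, so $r\le k-1$, whence at most $(k-1)\cdot|F|$ candidates can be bad against almost all surviving candidate sets. A secondary defect of the same kind occurs in your inductive step: rerunning the fusion ``verbatim with $\{L(S_n)\}$ replacing $\{B_n\}$'' treats $L(S_n)$ as a flat set, but for $\alpha_n>1$ an infinite subset of $L(S_n)$ need not be the leaf set of any Laver subtree of $S_n$ (one needs infinitely many survivors at every retained interior node), so the avoidance must be performed tree-wise inside each $S_n$ --- compare Proposition \ref{dicotomia conjunto}, which is exactly the positivity criterion this requires. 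In short: your diagnosis of the obstacle is more explicit than the paper's treatment, but your repair, as formulated, fails, and the unproved invariant-maintenance claim is false.
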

\begin{proof}

Let $\{s_n:n\in\omega\}$ be an enumeration of $X=\{s\in T: s\textit{ is 1-uniform}\}$. We will recursively construct $\{F_n:n\in\omega\}$ a sequence of finite subsets of $L(T)$ such that for every $n\in\omega$ the following holds:
\begin{enumerate}[\hspace{0.5cm} (1)]
\item $F_n\subseteq F_{n+1}$.
\item $F_n$ is polychromatic.
\end{enumerate}

We start by choosing $t\in succ_T(s_0)$ and $r\in succ_T(s_1)$. Define $F_0=\{t,r\}$. Now suppose we already defined $F_n$. For every $s_j$ with $j\leq n+1$ choose $t_j\in succ_T(s_j)\backslash F_n$ such that $F_n\cup\{t_j:j\leq n+1\}$ is polychromatic. Note that this can be done because $c$ is $k$-bounded and $succ_T(s)$ is infinite for every $s\in X$. Finally, let S be the downwards clousure of $F=\bigcup_{n\in\omega}F_n$. It is easy to see that $S\in\mathbb{L}(T)$ and $L(S)=F$ is polychromatic for $c$.
\end{proof}

\begin{prop}\label{U_genisrainbowRamsey}
$\mathbb{P}$ forces that $\mathcal{U}_{gen}$ is a rainbow Ramsey ultrafilter.
\end{prop}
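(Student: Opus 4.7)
The plan is to reduce the rainbow Ramsey property to the combinatorial statement just established, using the density of uniform conditions and a tautological witness for the forcing order. Since $\mathbb{P}$ is $\sigma$-closed and therefore adds no new reals, any name for a 2-bounded coloring $\dot c:[\omega]^2\to\omega$ is decided by some condition to be a ground-model coloring. Hence it suffices to prove: for every 2-bounded $c:[\omega]^2\to\omega$ in $V$ and every $p\in\mathbb{P}$, there exists $q\leq p$ and $A\in\mathcal{F}(q)$ such that $A$ is polychromatic for $c$; such a $q$ then forces $A\in\mathcal{U}_{gen}$, witnessing the rainbow Ramsey property.

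First I would pass to a suitable uniform extension of $p$. By Corollary \ref{densosuniformes} there is a uniform $p_0\leq p$, and then by Lemma \ref{alphauniform} I may assume $p_0$ is $\alpha$-uniform for some $\alpha>1$. The restriction $c\upharpoonright[L(p_0)]^2$ is a 2-bounded coloring on the countable set $L(p_0)\subseteq\omega$, so the preceding proposition produces $S\in\mathbb{L}(p_0)$ with $L(S)$ polychromatic for $c$.

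The last step is to view $S$ itself as a condition in $\mathbb{P}$ stronger than $p_0$. Indeed $S\in\mathbb{W}$, lies in $\textsf{H}(\omega_1)$, has root $\omega$, and $L(S)\subseteq L(p_0)\subseteq\omega$, so $S\in\mathbb{P}$. The pair $(S,S)$ witnesses $S\leq p_0$: we have $S\in\mathbb{H}(S)$ trivially and $S\in\mathbb{L}(p_0)$ by choice; the set $succ_S(\omega)$ is a barrier in $S$, and clauses (2) and (3) of Definition \ref{definicion de forcing para ultrafiltro Shelah} are vacuous when $p^H=q^L=S$. Since $S\in\mathbb{H}(S)$ we also have $L(S)\in\mathcal{F}(S)$, so $S\Vdash L(S)\in\mathcal{U}_{gen}$, completing the density argument.

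The only nontrivial combinatorial content lies in the previous proposition about polychromatic Laver subtrees, which is already proved. The present argument is essentially a repackaging: the main point to verify carefully is that an element of $\mathbb{L}(p_0)$ is automatically a stronger condition via the tautological witness $(S,S)$, so no further combinatorial effort is required.
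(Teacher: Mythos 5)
Your proof is correct and follows essentially the same route as the paper's: use $\sigma$-closedness to reduce to ground-model colorings, then argue density by passing to a uniform condition, applying the polychromatic-Laver-subtree proposition, and observing that the resulting $S\in\mathbb{L}(p_0)$ is itself a condition below $p_0$ forcing $L(S)\in\mathcal{U}_{gen}$. You in fact supply two details the paper's terse proof leaves implicit, namely passing to an $\alpha$-uniform condition with $\alpha>1$ and verifying via the witness $(S,S)$ that a Laver subtree is a stronger condition.
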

\begin{proof}
Since $\mathbb{P}$ does not add new reals we only need to consider colorings from $V$. Let $p\in\mathbb{P}$ and $c:[L(T)]^2\longrightarrow\omega$ that is a 2-bounded coloring. Now let $r\in\mathbb{L}(q)$ such that $L(r)$ is polychromatic for $c$. Then $r$ forces that $L(r)\in\Dot{\mathcal{U}}_{gen}$ and $L(r)$ is polychromatic for $c$.    
\end{proof}

\section{\texorpdfstring{$\mathcal{I}$-ultrafilters}{I-ultrafilters}}

We start the section by enunciating the next definition introduced by J. Baumgartner in \cite{RefBaumgartner}.

\begin{mydef}\label{Iultrafilters}
Let $\mathcal{I}$ an ideal on $X$ and $\mathcal{U}$ be an ultrafilter on $Y$. We say that $\mathcal{U}$ is a $\mathcal{I}\textit{-ultrafilter}$ if for every $f:Y\longrightarrow X$ there is $U\in\mathcal{U}$ such that $f[U]\in\mathcal{I}$. Equivalently, $\mathcal{U}$ is a $\mathcal{I}\textit{-ultrafilter}$ if and only if $\mathcal{I}\nleq_K\mathcal{U}^*$.    
\end{mydef}

It follows directly from the Definition that if $\mathcal{U}$ is a $\mathcal{I}$-ultrafilter and $\mathcal{I}\leq_K\mathcal{J}$, then $\mathcal{U}$ is a $\mathcal{J}$-ultrafilter. The importance of $\mathcal{I}$-ultrafilters lies in the fact that we can characterize many standard combinatorial properties of ultrafilters in
this way via Borel ideals (see \cite{FIN^2ULTRAFILTERS}). 

We now list some classical Borel ideals mentioned in this section.

\begin{enumerate}[\hspace{0.5cm}(1)]
\item $nwd=\{A\subseteq\mathbb{Q}: A\text{ is nowhere dense}\}.$

\item  $conv$ the ideal on $\mathbb{Q}\cap [0,1]$ generated by sequences in $\mathbb{Q}\cap [0,1]$ convergent in $[0,1]$. 

\item $\mathcal{ED}=\{A\subseteq\omega\times\omega: (\exists m,n\in\omega)(\forall k>n)(|\{l\in\omega: (k,l)\in A\}|<m)\}.$

\item $\mathcal{ED}_{fin}=\mathcal{ED}\upharpoonright\Delta$, where $\Delta=\{(n,m): m\leq n\}$.

\item  $\mathcal{I}_{\frac{1}{n}}=\{A\subseteq\omega:\sum_{n\in A}\frac{1}{n+1}<\infty\}$.

\item  $\mathcal{Z}=\{A\subseteq \omega: lim_{n\rightarrow \infty} \frac{|A\cap n|}{n}=0\}.$

\item The $\textit{Solecki ideal}$ $\mathcal{S}$  is the ideal on the set $\Omega=\{A\in Clop(2^\omega):\lambda(A)\leq 1/2\}$ (where $\lambda$ is the standard Haar measure on $2^\omega$), generated by the sets of the form $I_x=\{A\in\Omega:x\in A\}$ ($x\in 2^\omega$).
\end{enumerate}

The Kat\v{e}tov relationships between the above-mentioned ideals are summarized in the following diagram.

\begin{center}
\begin{tikzpicture}

%Nodes
\node (nwd)   {$nwd$};

\node (finxfin) [right=of nwd] {$fin \times fin$};

\node (Z) [right=of finxfin] {$\mathcal{Z}$};

\node (S) [below=of nwd] {$\mathcal{S}$};

\node (conv) [below=of finxfin] {$conv$};

\node (ED) [below=of Z] {$\mathcal{ED}$};

\node (EDFIN) [right=of ED] {$\mathcal{ED}_{fin}$};

\node (sumable) [above=of EDFIN] {$\mathcal{I} _{\frac{1}{n}}$};

%Lines
\draw[->] (S.north)--(nwd.south) ;

\draw[->] (conv.west) -- (nwd.east);

\draw[->] (conv.north) -- (finxfin.south);

\draw[->] (conv.east) -- (Z.west);

\draw[->] (ED.west) -- (finxfin.east);

\draw[->] (ED.east) -- (EDFIN.west);

\draw[->] (EDFIN.north) -- (sumable.south);

\draw[->] (sumable.west) -- (Z.east);

\end{tikzpicture}
\end{center} 

To exemplify the importance of Definition \ref{Iultrafilters},
 consider the following important
variation of the Kat\v{e}tov order; the \emph{Kat\v{e}tov-Blass} order. Given ideals $\mathcal{I}$, $\mathcal{J}$ on countable sets $X$ and $Y$ respectively, we say that $\mathcal{I}\leq_{KB}\mathcal{J}$ if there exists a finite-to-one function $f:Y\longrightarrow X$ such that $f^{-1}[I]\in\mathcal{J}$ for each $I\in\mathcal{I}$. Clearly $\mathcal{I}\leq_{KB}\mathcal{J}$ implies $\mathcal{I}\leq_{K}\mathcal{J}$.
 In \cite{FIN^2ULTRAFILTERS} it was shown the next result.

\begin{prop}\label{P-pointandfin}
Let $\mathcal{U}$ be an ultrafilter on $\omega$.
\begin{enumerate}[\hspace{0.5cm}(1)]
    \item $\mathcal{U}$ is Ramsey if and only if $\mathcal{U}$ is $\mathcal{ED}$-ultrafilter.
    \item $\mathcal{U}$ is P-point if and only if $\mathcal{U}$ is $fin^2$-ultrafilter if and only if $\mathcal{U}$ is $conv$-ultrafilter.
    \item $\mathcal{U}$ is Q-point if and only if $\mathcal{ED}_{fin}\nleq_{KB}\mathcal{U}$.  
\end{enumerate}  
\end{prop}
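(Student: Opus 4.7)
The plan is to prove each of the three characterizations by unpacking the definition of $\mathcal{I}$-ultrafilter and matching it against the classical combinatorial definitions of Ramsey, P-point, and Q-point ultrafilters.

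For part (1), the forward direction starts with $f\colon\omega\to\omega\times\omega$ and splits into cases. If $f^{-1}(\{n\}\times\omega)\in\mathcal{U}$ for some $n$, then that set is a witness since $\{n\}\times\omega\in\mathcal{ED}$ (all columns other than the $n$-th are empty). Otherwise, the P-point property of a Ramsey ultrafilter applied to $\pi_{1}\circ f$ produces $V\in\mathcal{U}$ on which $\pi_{1}\circ f$ is finite-to-one; then extending the partition $\{V\cap(\pi_{1}\circ f)^{-1}(n):n\in\omega\}$ to a finite-set partition of $\omega$ and invoking the Q-point property yields $U\subseteq V$ in $\mathcal{U}$ selecting at most one point per column, so $f[U]\in\mathcal{ED}$. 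The converse requires deducing both P-point and Q-point from the $\mathcal{ED}$-ultrafilter property. For P-point, given a decreasing sequence $\{A_{n}\}$ in $\mathcal{U}$ with $\bigcap_{n}A_{n}=\emptyset$, one codes it as $f(k)=(g(k),j(k))$, where $g(k)$ is the unique $n$ with $k\in A_{n}\setminus A_{n+1}$ and $j(k)$ is the position of $k$ in a fixed enumeration of $A_{g(k)}\setminus A_{g(k)+1}$; the $\mathcal{ED}$-condition on $f[U]$, combined with replacing $U$ by $U\cap A_{n_{0}+1}$, yields a pseudo-intersection. For Q-point, given a partition $\{I_{n}\}$ of $\omega$ into finite sets, the encoding $f(k)=(n,j)$ where $k$ is the $j$-th element of $I_{n}$ reduces the selection problem to the $\mathcal{ED}$-condition.

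For part (2), the same encoding as in (1) shows that $\mathcal{U}$ is $fin^{2}$-ultrafilter iff every decreasing sequence in $\mathcal{U}$ admits a pseudo-intersection: a set $U$ with $f[U]\in fin\times fin$ has all but finitely many columns finite, so after intersecting with $A_{n_{0}+1}$ one obtains a pseudo-intersection. For the converse, given $f\colon\omega\to\omega\times\omega$, either $f^{-1}(\{n\}\times\omega)\in\mathcal{U}$ for some $n$ (and that set maps into $\{n\}\times\omega\in fin\times fin$), or $\pi_{1}\circ f$ has no fiber in $\mathcal{U}$ and the P-point property yields $U$ on which $\pi_{1}\circ f$ is finite-to-one, giving $f[U]\in fin\times fin$. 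The equivalence with $conv$ comes from coding convergent sequences in $\mathbb{Q}\cap[0,1]$ as nested neighborhood bases: a correspondence between descending sequences in $\mathcal{U}$ and sequences in $conv^{+}$ translates the P-point property directly.

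For part (3), a partition $\{I_{n}\}$ of $\omega$ into finite sets is coded as a finite-to-one map $f\colon\omega\to\Delta$ sending $k$ to $(n,j)$ if $k$ is the $j$-th element of $I_{n}$ (splitting $I_{n}$ across several rows of $\Delta$ when $|I_{n}|>n+1$). A selector $U$ for the partition satisfies $|U\cap I_{n}|\leq 1$, which corresponds to $f[U]\in\mathcal{ED}_{fin}$; hence the Q-point property gives $\mathcal{ED}_{fin}\nleq_{KB}\mathcal{U}^{*}$. Conversely, given any finite-to-one $f\colon\omega\to\Delta$, the preimages of the rows of $\Delta$ form a finite-set partition of $\omega$, and applying the Q-point property to this partition produces a witness for the Katětov--Blass non-reducibility. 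The main obstacle lies in the converse of (1), namely extracting the Q-point property from the $\mathcal{ED}$-ultrafilter condition: the ideal $\mathcal{ED}$ only guarantees bounded-size columns rather than the selectivity inherent in Q-points, so one must carefully choose the injective labeling $j(k)$ within levels so that the bounds $|f[U]\cap(\{n\}\times\omega)|<m$ can be refined to actual selectors via a diagonal thinning.
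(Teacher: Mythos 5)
The paper gives no proof of this proposition at all --- it is quoted from the literature (``In \cite{FIN^2ULTRAFILTERS} it was shown the next result'') --- so your proposal can only be judged on its own merits. Most of it is the standard coding argument and is correct: the forward direction of (1), the P-point extraction from the $\mathcal{ED}$-condition via the injective coding $f(k)=(g(k),j(k))$ together with intersecting with $A_{n_0+1}$, and both directions of the $fin^2$ part of (2) all go through as written. But there is one genuine gap, which you yourself flag as ``the main obstacle'': upgrading the conclusion $|U\cap I_n|<m$ (for $n>n_0$, with $m$ \emph{uniform}) to an actual selector. Your proposed fix, ``diagonal thinning,'' does not work as a mechanism: any recursive subselection of one point per block produces a proper subset of $U$ that has no reason to lie in $\mathcal{U}$ (take $U=\omega$ and blocks of size $2$; the two candidate selectors partition $\omega$ and an arbitrary ultrafilter contains exactly one of them, so something must \emph{choose}). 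The correct finishing move exploits precisely the uniformity of $m$ in the definition of $\mathcal{ED}$: after discarding the finitely many bad blocks (each trace $U\cap I_n$ is not in $\mathcal{U}$, and for $\mathcal{ED}_{fin}$ the blocks are finite anyway), define $g:U\longrightarrow m$ by letting $g(k)$ be the index of $k$ in the increasing enumeration of $U\cap I_{n(k)}$; since $g$ has finite range and $\mathcal{U}$ is an ultrafilter, some $g^{-1}(i)\in\mathcal{U}$, and that set meets each block in at most one point. This one lemma closes the converse of (1), and it is also exactly what is missing in the direction $\mathcal{ED}_{fin}\nleq_{KB}\mathcal{U}^{*}\Rightarrow$ Q-point in your part (3): note that as written, your two sentences there both argue the direction from Q-point to non-reducibility (a selector yields $f[U]\in\mathcal{ED}_{fin}$), while the converse needs to start from some $I\in\mathcal{ED}_{fin}$ with $f^{-1}[I]\in\mathcal{U}$ and then perform the bounded-trace-to-selector upgrade.

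A secondary, smaller issue: the $conv$ equivalence in (2) is asserted rather than proved. The direction ``P-point $\Rightarrow$ $conv$-ultrafilter'' is not a formal translation of nested sequences; it needs the $\mathcal{U}$-limit $x\in[0,1]$ of $f$ (which exists by compactness), the decreasing sets $A_n=f^{-1}\left(\left(x-\tfrac{1}{n},x+\tfrac{1}{n}\right)\right)\in\mathcal{U}$, and the observation that a pseudo-intersection $U$ makes $f[U]$ have $x$ as its unique accumulation point, hence $f[U]\in conv$. The converse needs an explicit coding of the blocks $A_n\setminus A_{n+1}$ into disjoint rational intervals accumulating at $0$, plus the remark that only finitely many blocks can meet $U$ infinitely (each such block contributes a distinct accumulation point) and none of those blocks is in $\mathcal{U}$. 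With the selector lemma inserted and the $conv$ argument written out, your route is the standard one and the proposition is fully proved.
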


It is clear that Shelah ultrafilters cannot be neither $conv$-ultrafilters nor $fin^\alpha$-ultrafilters for any $\alpha<\omega_1$. So, by Proposition \ref{P-pointandfin} we can conclude the following.

\begin{cor}\label{ShelahisnotP-point}
If $\mathcal{U}$ is a Shelah ultrafilter, then it is not a P-point.
\end{cor}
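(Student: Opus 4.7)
The plan is to read off the corollary directly from property (1) of the definition of Shelah ultrafilter combined with Proposition \ref{P-pointandfin}(2), which characterises P-points as $fin^{2}$-ultrafilters.

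First I would recall that, by Definition \ref{Iultrafilters}, an ultrafilter $\mathcal{U}$ on $\omega$ is an $\mathcal{I}$-ultrafilter if and only if $\mathcal{I}\not\leq_{K}\mathcal{U}^{\ast}$. Let now $\mathcal{U}$ be a Shelah ultrafilter. Applying clause (1) of the definition with $\alpha=2$, there is an ideal $\mathcal{I}$ isomorphic to $fin^{2}$ with $\mathcal{I}\subseteq\mathcal{U}^{\ast}$. Since isomorphic ideals are trivially Kat\v{e}tov equivalent, $fin^{2}\cong_{K}\mathcal{I}$, and by Lemma \ref{Basic Katetov}(3) the inclusion $\mathcal{I}\subseteq\mathcal{U}^{\ast}$ yields $\mathcal{I}\leq_{K}\mathcal{U}^{\ast}$. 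Chaining these gives $fin^{2}\leq_{K}\mathcal{U}^{\ast}$, so $\mathcal{U}$ fails to be a $fin^{2}$-ultrafilter.

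Finally, by Proposition \ref{P-pointandfin}(2), being a P-point is equivalent to being a $fin^{2}$-ultrafilter. Hence $\mathcal{U}$ is not a P-point, as required. There is no real obstacle here: the work has already been done in the definition of Shelah ultrafilter (which was specifically crafted so that large copies of $fin^{\alpha}$ sit inside $\mathcal{U}^{\ast}$) and in the Kat\v{e}tov characterisation of P-points. The argument is essentially a one-line bookkeeping of these two inputs, and it in fact gives the stronger statement foreshadowed in the paragraph preceding the corollary: a Shelah ultrafilter is not a $fin^{\alpha}$-ultrafilter for any $1\le\alpha<\omega_{1}$.
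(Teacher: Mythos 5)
Your proof is correct and follows exactly the paper's (implicit, one-line) argument: clause (1) of the definition of Shelah ultrafilter with $\alpha=2$ plants an isomorphic copy of $fin^{2}$ inside $\mathcal{U}^{\ast}$, whence $fin^{2}\leq_{K}\mathcal{U}^{\ast}$ by Lemma \ref{Basic Katetov}(3) and transitivity, and Proposition \ref{P-pointandfin}(2) then rules out being a P-point. Your filling in of the Kat\v{e}tov bookkeeping (isomorphism implies $\cong_{K}$, inclusion implies $\leq_{K}$) is exactly what the paper leaves to the reader.
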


We now want to see for which ideals $\mathcal{I}$ it holds that a Shelah ultrafilter is also a $\mathcal{I}$-ultrafilter. The following Lemma will be very useful to do this.

\begin{lemma}
 Let $\mathcal{U}$ be a Shelah ultrafilter and  $\mathcal{I}$ be an analytic ideal on $\omega$. Then $\mathcal{I}\leq_K\mathcal{U}^*$ if and only if there exists $\alpha<\omega_1$ such that  $\mathcal{I}\leq_K fin^\alpha$.
\end{lemma}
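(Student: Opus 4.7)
The plan is to prove the two directions of the biconditional separately, invoking condition~(2) of the Shelah definition for the forward direction and condition~(1) for the backward direction.

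For $(\Rightarrow)$, assuming $\mathcal{I}\leq_K\mathcal{U}^*$ witnessed by some $f:\omega\to\omega$, I form the pullback ideal $\mathcal{I}'=\{A\subseteq\omega:f[A]\in\mathcal{I}\}$ on $\omega$. This is analytic because $A\mapsto f[A]$ is a Borel operation on $\mathcal{P}(\omega)$ and $\mathcal{I}$ is analytic. The map $f$ itself witnesses $\mathcal{I}\leq_K\mathcal{I}'$ (since $f^{-1}[I]\in\mathcal{I}'$ for each $I\in\mathcal{I}$), and each $A\in\mathcal{I}'$ is contained in $f^{-1}[f[A]]\in\mathcal{U}^*$, so $\mathcal{I}'\subseteq\mathcal{U}^*$ and in particular $\mathcal{I}'\cap\mathcal{U}=\emptyset$. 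Applying condition~(2) to $\mathcal{I}'$ produces $\alpha<\omega_1$ and an ideal $\mathcal{J}\cong_K fin^\alpha$ with $\mathcal{I}'\subseteq\mathcal{J}\subseteq\mathcal{U}^*$; combining Lemma~\ref{Basic Katetov}(3) with transitivity of $\leq_K$ gives $\mathcal{I}\leq_K\mathcal{I}'\leq_K\mathcal{J}\cong_K fin^\alpha$.

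For $(\Leftarrow)$, suppose $\mathcal{I}\leq_K fin^\alpha$ via some $\phi:X_\alpha\to\omega$. By condition~(1), I fix an ideal $\mathcal{J}$ isomorphic to $fin^\alpha$ with $\mathcal{J}\subseteq\mathcal{U}^*$, via a bijection $\sigma:X_\alpha\to Y$ where $Y=\bigcup\mathcal{J}\subseteq\omega$. Then $g=\phi\circ\sigma^{-1}:Y\to\omega$ satisfies $g^{-1}[I]=\sigma[\phi^{-1}[I]]\in\mathcal{J}\subseteq\mathcal{U}^*$ for every $I\in\mathcal{I}$. Provided $Y\in\mathcal{U}$, I extend $g$ arbitrarily to $h:\omega\to\omega$; then $h^{-1}[I]\subseteq g^{-1}[I]\cup(\omega\setminus Y)$ is a union of two sets in $\mathcal{U}^*$, so $h^{-1}[I]\in\mathcal{U}^*$ and $\mathcal{I}\leq_K\mathcal{U}^*$ as required.

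The main obstacle is guaranteeing $Y\in\mathcal{U}$ for the copy $\mathcal{J}$ produced by condition~(1), which the bare definition does not force. To handle a copy on some $Y\notin\mathcal{U}$, I would enlarge $\mathcal{J}$ to the analytic ideal $\tilde{\mathcal{J}}$ on $\omega$ generated by $\mathcal{J}$ together with the finite subsets of $\omega\setminus Y$ (still inside $\mathcal{U}^*$), apply condition~(2) to $\tilde{\mathcal{J}}$ to get $\mathcal{K}\cong_K fin^\beta$ on $\omega$ with $\tilde{\mathcal{J}}\subseteq\mathcal{K}\subseteq\mathcal{U}^*$, note that $\beta\geq\alpha$ follows from $fin^\alpha\cong\mathcal{J}\leq_K\mathcal{K}\cong_K fin^\beta$ together with the strict Kat\v{e}tov ordering of the $fin^\gamma$, and then invoke the Barbarski--Filip\'ow--Mrozek--Szuca theorem to obtain a bijective Kat\v{e}tov morphism witnessing $fin^\alpha\leq_K\mathcal{K}$. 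This yields an isomorphic copy of $fin^\alpha$ on all of $\omega$ inside $\mathcal{U}^*$, which can replace $\mathcal{J}$ in the argument of the previous paragraph.
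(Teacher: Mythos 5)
Your main line of argument is essentially the paper's proof. For the forward direction the paper also pulls $\mathcal{I}$ back along the witnessing map (it takes $\mathcal{K}=\{f^{-1}[I]:I\in\mathcal{I}\}$; your downward-closed version $\mathcal{I}'=\{A:f[A]\in\mathcal{I}\}$ is the same idea and in fact slightly cleaner, since $\mathcal{K}$ as literally written need not be closed under subsets), applies condition (2) to it, and composes the three Kat\v{e}tov reductions exactly as you do. For the backward direction, in the case $Y\in\mathcal{U}$ (in particular $Y=\omega$) your composition-and-extension argument is precisely the paper's one-line proof.

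The divergence is your fallback for a copy of $fin^\alpha$ living on some $Y\notin\mathcal{U}$, and there the argument has a genuine gap. The step ``$\beta\geq\alpha$ follows from $fin^\alpha\cong\mathcal{J}\leq_K\mathcal{K}\cong_K fin^\beta$'' is unjustified: $\mathcal{J}$ is an ideal on $Y$ while $\mathcal{K}$ is an ideal on $\omega$, so the containment $\mathcal{J}\subseteq\tilde{\mathcal{J}}\subseteq\mathcal{K}$ does not yield a Kat\v{e}tov morphism --- item (3) of Lemma \ref{Basic Katetov} applies only to ideals on the same underlying set, and any morphism $\omega\to Y$ must do something with $\omega\setminus Y$, which here lies in $\mathcal{U}$ and hence not in $\mathcal{K}$, while mapping it into $Y$ fails because $fin^\alpha$ is tall for $\alpha\geq 2$. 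Indeed $\beta\geq\alpha$ can fail outright: since $Y\notin\mathcal{U}$, every subset of $Y$ belongs to $\mathcal{U}^*$, so $\mathcal{K}_0=\{B\subseteq\omega:B\setminus Y\textit{ is finite}\}$ is a Borel ideal with $\tilde{\mathcal{J}}\subseteq\mathcal{K}_0\subseteq\mathcal{U}^*$ which is not tall, hence Kat\v{e}tov equivalent to $fin^1$ by Lemma \ref{Basic Katetov}(2); nothing in condition (2) prevents it from returning such a $\mathcal{K}$, in which case your construction produces only a copy of $fin^1$. Moreover, no repair is possible under this weak reading of condition (1): placing an isomorphic copy of $fin^\alpha$ on any infinite $Y\in\mathcal{U}^*$ automatically puts it inside $\mathcal{U}^*$, so the weak reading makes condition (1) vacuously true for every nonprincipal ultrafilter and it then carries no Kat\v{e}tov information about $\mathcal{U}^*$. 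The intended reading --- visible both in the paper's one-line backward direction and in the proof that $\mathcal{U}_{gen}$ is Shelah, where the copy is produced on the whole underlying set by the bijection of Corollary \ref{copiaisomorfa} --- is that the copy in condition (1) is an ideal on $\omega$ itself. Under that reading your first backward paragraph already is the paper's proof, and the fallback (together with its flawed ordinal comparison) is unnecessary.
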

\begin{proof}
Let $f:\omega\longrightarrow\omega$ such that $f^{-1}[I]\in\mathcal{U}^*$ for every $I\in\mathcal{I}$. Let $\mathcal{K}$ be the set $\{f^{-1}[I]: I\in\mathcal{I}\}$. Then  $\mathcal{K}$ is an analytic ideal included in $\mathcal{U}^*$ and hence there are $\alpha<\omega_1$ and an ideal $\mathcal{J}$, such that $\mathcal{J}$ is Kat\v{e}tov equivalent to $fin^\alpha$ and $\mathcal{K}\subseteq\mathcal{J}\subseteq\mathcal{U}^*$. Thus, it follows that $\mathcal{I}\leq_K\mathcal{K}\leq_K\mathcal{J}\leq_K fin^\alpha$.

Now suppose that there exists $\alpha<\omega_1$ such that  $\mathcal{I}\leq_K fin^\alpha$. Let $\mathcal{J}$ be an ideal Kat\v{e}tov equivalent to $fin^\alpha$ such that  $\mathcal{J}\subseteq\mathcal{U}^*$. Thus, $\mathcal{I}\leq_K\mathcal{J}$ and any witness for $\mathcal{I}\leq_K\mathcal{J}$ is also a witness for $\mathcal{I}\leq_K\mathcal{U}^*$.
\end{proof}
 
Thus, knowing for which ideals $\mathcal{I}$ it is true that a Shelah ultrafilter is also a $\mathcal{I}$-ultrafilter, is equivalent to knowing the Kat\v{e}tov relationship between $\mathcal{I}$ and the ideals $fin^\alpha.$

\begin{mydef}
Let $T$ be an element of $\mathbb{W}$. We say that $\bigstar(T)$ holds if for every $f:L(T)\longrightarrow\omega$ one of the following happens: 
\begin{enumerate} [\hspace{0.5cm} (I)]
    \item $*(T,f):=$ There exists $S\in \mathbb{L}(T)$ such that $|f[L(S)]|=1$.
    \item $\bigtriangleup(T,f):=$ There are $S\in \mathbb{L}(T)$ and $A\subset S$ a barrier in $S$ such that for every $t\in A$ and every $s,s'\in succ_S(t)$ ($s\neq s'$) the following holds:
\begin{enumerate}[\hspace{0.8 cm} (a)]
    \item $|f[L(S_s)]|=1$.
    \item $f[L(S_s)]\neq f[L(S_{s'})]$.
\end{enumerate} 
\end{enumerate}
\end{mydef}

\begin{prop}\label{1-uniform}
Let $T$ be 1-uniform. Then $\bigstar(T).$
\end{prop}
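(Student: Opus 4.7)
The plan is to apply the infinite pigeonhole principle to $f$. Since $T$ is $1$-uniform, $L(T)=succ_T(rt(T))$ is countably infinite, and a Laver subtree $S\in\mathbb{L}(T)$ is nothing more than a choice of infinite set $B\subseteq L(T)$: namely $rt(S)=rt(T)$ with $succ_S(rt(S))=B=L(S)$.

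First I would split into cases according to whether some fibre of $f$ is infinite. If there is $k\in\omega$ with $B:=f^{-1}(k)$ infinite, I take the Laver subtree $S\in\mathbb{L}(T)$ with $L(S)=B$. Then $|f[L(S)]|=|\{k\}|=1$, which directly witnesses $*(T,f)$.

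Otherwise every fibre of $f$ is finite, so $f[L(T)]$ must be infinite and I can recursively pick, one value at a time, an infinite $B\subseteq L(T)$ on which $f$ is injective. Let $S\in\mathbb{L}(T)$ be the Laver subtree with $L(S)=B$ and set $A=\{rt(S)\}$. Since every branch of the $1$-uniform tree $S$ passes through $rt(S)$, the singleton $A$ is a barrier in $S$. For the unique $t=rt(S)\in A$ and any two distinct $s,s'\in succ_S(t)=L(S)$, the cones satisfy $L(S_s)=\{s\}$ and $L(S_{s'})=\{s'\}$, hence $|f[L(S_s)]|=1$, and $f[L(S_s)]=\{f(s)\}\neq\{f(s')\}=f[L(S_{s'})]$ by the injectivity of $f$ on $B$. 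This witnesses $\bigtriangleup(T,f)$.

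There is no real obstacle to overcome; the substance of the proposition is the familiar dichotomy that any map $\omega\to\omega$ is either constant on an infinite set or injective on an infinite set, merely translated into the tree language. I expect this statement to serve as the base case of an induction on the uniform rank of $T$ that would ultimately yield $\bigstar(T)$ for every uniform $T\in\mathbb{W}$.
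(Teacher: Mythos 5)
Your proof is correct and matches the paper's argument exactly: the same dichotomy (either $f$ has an infinite fibre, yielding $*(T,f)$, or $f$ is injective on an infinite set of leaves, whose downward closure together with $A=\{rt(T)\}$ witnesses $\bigtriangleup(T,f)$), with your version merely spelling out the barrier and cone verifications that the paper leaves implicit. Your closing remark is also accurate—this proposition is indeed the base case for the induction carried out in Proposition \ref{generalcase}.
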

\begin{proof}
Let $f$ be a function from $L(T)$ to $\omega$. If there exists $X\in [L(T)]^\omega$ such that $|f[X]|=1$, then the downwards closure of $X$ a witness of $*(T,f)$. In other case, it is posible to construct $X\in [L(T)]^\omega$ such that $f\upharpoonright X$ is injective. Thus, if $A=\{rt(T)\}$ and $S$ is the downwards closure of $X$, then $A$ and $S$ are witnesses to $\bigtriangleup(T,f)$. 
\end{proof}

\begin{prop}\label{generalcase}
Consider $\alpha<\omega_1.$ and $T\in\mathbb{W}$ that is $\alpha$-uniform. Suppose that for every $\beta<\omega_1$ and every $S\in\mathbb{W}$ that is $\beta$-uniform, $\bigstar(S)$ holds. Then $\bigstar(T)$ holds.
\end{prop}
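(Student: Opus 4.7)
My plan is to perform a case analysis at the root, applying the hypothesis on each immediate subtree of $T$. Let $\{s_n : n \in \omega\}$ enumerate $succ_T(rt(T))$; by the definition of $\alpha$-uniform, each $s_n$ is $\gamma_n$-uniform in $T$ for some $\gamma_n < \alpha$ (with $\gamma_n$ constant if $\alpha$ is a successor and $\gamma_n \nearrow \alpha$ if $\alpha$ is a limit). Hence the cone $T^n := T_{s_n} \setminus \{rt(T)\}$, viewed as a $\gamma_n$-uniform element of $\mathbb{W}$ with root $s_n$, satisfies $\bigstar(T^n)$ by hypothesis. Given $f : L(T) \longrightarrow \omega$, set $f_n = f \upharpoonright L(T^n)$; then for each $n$, one of $*(T^n, f_n)$ or $\bigtriangleup(T^n, f_n)$ holds, and by pigeonhole there is an infinite $X \subseteq \omega$ on which the same alternative holds for every $n \in X$.

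\textbf{Case 1: $*(T^n, f_n)$ for every $n \in X$.} Pick $S_n \in \mathbb{L}(T^n)$ with $f[L(S_n)] = \{k_n\}$. Either some value $k$ equals $k_n$ for infinitely many $n \in X$, or $X$ can be refined to an infinite set on which $n \mapsto k_n$ is injective. In the first subcase, $S = \{rt(T)\} \cup \bigcup\{S_n : n \in X,\ k_n = k\}$ is a Laver subtree of $T$ with $f[L(S)] = \{k\}$, witnessing $*(T, f)$. In the second, $S = \{rt(T)\} \cup \bigcup_{n \in X} S_n$ together with the barrier $A = \{rt(T)\}$ witnesses $\bigtriangleup(T, f)$: the successors of $rt(T)$ in $S$ are exactly $\{s_n : n \in X\}$, with $S_{s_n} = S_n$, so each $|f[L(S_{s_n})]| = 1$ and these singletons are pairwise distinct.

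\textbf{Case 2: $\bigtriangleup(T^n, f_n)$ for every $n \in X$.} Pick $S_n \in \mathbb{L}(T^n)$ and barriers $A_n \subset S_n$ witnessing $\bigtriangleup(T^n, f_n)$. Set $S = \{rt(T)\} \cup \bigcup_{n \in X} S_n$ and $A = \bigcup_{n \in X} A_n$. Since the $S_n$ are pairwise disjoint and each $A_n$ is an antichain in $S_n$, $A$ is an antichain in $S$; every branch of $S$ passes through some $s_n$ and then meets $A_n$, so $A$ is a barrier. For $t \in A_n$ and $s \in succ_S(t) = succ_{S_n}(t)$, the cones coincide: $S_s = (S_n)_s$. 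Therefore both color conditions demanded by $\bigtriangleup(T, f)$ transfer verbatim from those already guaranteed by $\bigtriangleup(T^n, f_n)$.

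The combinatorics is routine; the only point requiring care is the gluing, namely that $S = \{rt(T)\} \cup \bigcup_n S_n$ is a bona fide Laver subtree of $T$ (each $S_n$ is well-founded and $\omega$-branching, and their attachment to $rt(T)$ produces the infinite successor set $\{s_n : n \in X\}$ at the root) and that $S_s = (S_n)_s$ for $t \in A_n$, $s \in succ_{S_n}(t)$, so the color data inherited from each $T^n$ lifts intact. I anticipate no further obstacle; this proposition is precisely the inductive step that, combined with Proposition \ref{1-uniform}, yields $\bigstar(T)$ for every uniform $T$ by induction on the uniformity rank.
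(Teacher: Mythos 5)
Your proof is correct and takes essentially the same route as the paper: decompose $T$ at the root into the cones above the $s_n$, apply the hypothesis to each, and split according to whether the $\bigtriangleup$-alternative holds for infinitely many $n$ (gluing the witnesses $S_n$ and barriers $A_n$ in that case) or the $*$-alternative does (running the constant-versus-injective pigeonhole on the induced root coloring, yielding $*(T,f)$ or $\bigtriangleup(T,f)$ with barrier $\{rt(T)\}$). The only cosmetic difference is that where you argue the constant/injective dichotomy directly, the paper packages it as an application of $\bigstar(R)$ to the $1$-uniform tree $R=\{rt(T)\}\cup succ_T(rt(T))$ with the coloring $g(s_n)=f[L(S_n)]$ --- the same pigeonhole in disguise.
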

\begin{proof}
Let $f:L(T)\longrightarrow\omega$ and let $\{s_n:n\in\omega\}$ be a enumeration of $succ_T(rt(T))$. Note that since $T$ is $\alpha$-uniform, each $T_{s_n}$ is $\alpha_n$-uniform for some $\alpha_n<\alpha$ and therefore $\bigstar(T_{s_n})$ holds. Define $X=\{n\in\omega: \bigtriangleup(T_{s_n},f) \}$. We first assume that $X$ is infinite. For every $n\in X$, choose $S_n\in\mathbb{L}(T_{s_n})$  and a barrier $A_n\subset T_{s_n}$ that satisfy $\bigtriangleup(T_{s_n},f)$. Then, $S=\bigcup_{n\in X} S_n$ and $A=\bigcup_{n\in X} A_n$ satisfy $\bigtriangleup(T,f)$. Now suppose that $X$ is finite. Without lost of generality $X=\emptyset.$ For every $n\in\omega$, choose $S_n\in\mathbb{L}(T_{s_n})$ that satisfies  $*(T_{s_n},f)$ and define $g:succ_T(rt(T))\longrightarrow\omega$ by $g(s_n)=f[S_{s_n}]$. Consider $R=\{rt(T)\}\cup\{s_n:n\in\omega\}$. Then $R$ is an element of $\mathbb{W}$ that is 1-uniform and therefore $\bigstar(R)$. If there exists $X\in[\omega]^\omega$ such that $|g[\{s_n:n\in X\}]|=1$, then $S=\bigcup_{n\in X}S_n$ is a Laver subtree of $T$ such that $|f[L(S)]|=1$. In other case, construct $X\in[\omega]^\omega$ such that $g\upharpoonright \{s_n:n\in X\}$ is injective. Thus, $S=\bigcup_{n\in X}S_n$ and $A=\{rt(T)\}$ satisfy $\bigtriangleup(T,f)$.
\end{proof}

The following result is an immediate consequence of Proposition \ref{1-uniform} and Proposition \ref{generalcase}.
\begin{theorem}
Let $\alpha<\omega_1$ and let $T$ be $\alpha$-uniform. Then $\bigstar(T)$.  
\end{theorem}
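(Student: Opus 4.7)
The plan is to proceed by transfinite induction on $\alpha<\omega_1$. The base case $\alpha=1$ is exactly Proposition~\ref{1-uniform}; the degenerate case $\alpha=0$ is immediate, since then $T$ is a single leaf and every $f:L(T)\longrightarrow\omega$ is trivially constant, so $*(T,f)$ holds.

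For the inductive step with $1<\alpha<\omega_1$, I would assume $\bigstar(S)$ for every $\beta$-uniform $S\in\mathbb{W}$ with $\beta<\alpha$, and then apply Proposition~\ref{generalcase} to $T$. Although that proposition is phrased as requiring $\bigstar$ on all $\beta$-uniform trees for every $\beta<\omega_1$, the argument in its proof only invokes the hypothesis on the immediate subtrees $T_{s_n}$, which by $\alpha$-uniformity of $T$ are $\alpha_n$-uniform with $\alpha_n<\alpha$. Hence the inductive hypothesis is exactly what is needed to feed Proposition~\ref{generalcase}, and $\bigstar(T)$ follows.

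I do not expect any genuine obstacle here: Propositions~\ref{1-uniform} and~\ref{generalcase} already do all the combinatorial work. The former handles a single level via the standard monochromatic-versus-injective pigeonhole, and the latter recursively assembles a witness by splitting on whether $X=\{n:\bigtriangleup(T_{s_n},f)\}$ is infinite or finite. The only point worth checking is that the recursion closes at limit ordinals, but this is automatic: when $\alpha$ is a limit, the ranks $\alpha_n$ of the children of the root satisfy $\alpha_n<\alpha$ with supremum $\alpha$, so the inductive hypothesis applies uniformly to each $T_{s_n}$ without any limit-specific adjustment. The theorem therefore follows as an immediate corollary of the two preceding propositions.
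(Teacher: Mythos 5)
Your proof is correct and is essentially the paper's own argument: the paper derives the theorem as an ``immediate consequence'' of Propositions~\ref{1-uniform} and~\ref{generalcase}, which is exactly the transfinite induction you spell out, with Proposition~\ref{1-uniform} as the base case and Proposition~\ref{generalcase} as the inductive step. Your added observation is also accurate and worth having on record: as literally phrased, Proposition~\ref{generalcase} has a hypothesis that already contains its conclusion, and the induction only closes because its proof invokes $\bigstar$ solely on the subtrees $T_{s_n}$, which are $\alpha_n$-uniform with $\alpha_n<\alpha$, so the restricted inductive hypothesis suffices at successors and limits alike.
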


With the previous combinatorial properties of the $\alpha$-uniform trees we can now prove the following.

\begin{prop}
Let $\alpha<\omega_1$,  $T\in\mathbb{W}$ that is $\alpha$-uniform and  $f:L(T)\longrightarrow\Delta$. Then there exists $R\in\mathbb{L}(T)$ such that $f[L(R)]\in\mathcal{ED}_{fin}$.    
\end{prop}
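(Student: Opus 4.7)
The plan is to apply the dichotomy $\bigstar(T)$ (established immediately above) to the composition $g = \pi_1 \circ f \colon L(T) \to \omega$, where $\pi_1 \colon \Delta \to \omega$ is the first-coordinate projection, and treat the two outcomes separately. If $*(T,g)$ holds, we obtain $S \in \mathbb{L}(T)$ with $g[L(S)]$ a singleton $\{k_0\}$, so $f[L(S)] \subseteq \{k_0\}\times\{0,\ldots,k_0\}$ is finite and a fortiori in $\mathcal{ED}_{fin}$; take $R = S$.

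The substantive case is $\bigtriangleup(T,g)$, which yields $S \in \mathbb{L}(T)$ and a barrier $A \subseteq S$ such that, for each $t \in A$ and each $s \in succ_S(t)$, there is a unique $n_s \in \omega$ with $f[L(S_s)] \subseteq \{n_s\}\times\{0,\ldots,n_s\}$, and the values $n_s$ are pairwise distinct as $s$ ranges over $succ_S(t)$. My first step is to make $f$ constant on each $L([s]_S)$: since $f \upharpoonright L([s]_S)$ has finite range, one iteratively applies Proposition \ref{dicotomia conjunto} to each color class, using that the intersection of finitely many Hechler subtrees is Hechler (Lemma \ref{subarboles}), to obtain $R_s \in \mathbb{L}([s]_S)$ on which $f$ is constant, with value $(n_s, m_s)$ for some $m_s \leq n_s$.

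Next I enumerate $A = \{t_i : i < \kappa\}$ with $\kappa \leq \omega$ (as $S$ is countable) and set $X_i = \{n_s : s \in succ_S(t_i)\}$, an infinite subset of $\omega$. A standard diagonal construction produces infinite $X_i' \subseteq X_i$ that are pairwise disjoint. Setting $succ_R(t_i) = \{s \in succ_S(t_i) : n_s \in X_i'\}$, I define $R$ as the union of the $S$-downward closure of $A$ together with $R_s$ for each $t_i \in A$ and each $s \in succ_R(t_i)$. One then verifies $R \in \mathbb{L}(T)$: because $A$ is a barrier in $S$, every $S$-successor of a strict ancestor of $A$ is itself an ancestor of some element of $A$ and so remains in $R$, preserving $\omega$-branching at such nodes; each $t_i \in A$ has infinitely many successors in $R$ by construction; and above $s \in succ_R(t_i)$ the tree $R$ coincides with $R_s \in \mathbb{L}([s]_S)$. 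Finally $f[L(R)] = \{(n_s, m_s) : t_i \in A,\ s \in succ_R(t_i)\}$, and by the disjointness of the $X_i'$ together with the injectivity of $s \mapsto n_s$ on each $succ_S(t_i)$, each column of $f[L(R)]$ contains at most one point, whence $f[L(R)] \in \mathcal{ED}_{fin}$. The main obstacle is organizing the simultaneous thinning of the $succ_S(t_i)$ so that the $X_i'$ are pairwise disjoint while each $succ_R(t_i)$ remains infinite, and then carefully checking that the assembled $R$ is $\omega$-branching everywhere.
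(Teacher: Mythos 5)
Your proof is correct, and it follows the same skeleton as the paper's — invoke the $\bigstar(T)$ dichotomy and, in the $\bigtriangleup$ case, thin the successors of the barrier elements so that the surviving cones occupy pairwise distinct columns of $\Delta$ — but the implementation differs at two genuine points. The paper applies $\bigstar(T)$ to $f$ itself (implicitly identifying $\Delta$ with $\omega$), so clause (a) hands it constancy of $f$ on each successor cone for free; the price is that clause (b) only yields distinct \emph{values} in $\Delta$, and to select successors with large, pairwise distinct first coordinates the paper tacitly uses that each column $\{n\}\times\{0,\dots,n\}$ of $\Delta$ is finite (this is hidden in its sentence ``This can be done because of (a) and (b)''), organizing the selection through an interleaved enumeration of $A$ with increasing thresholds $k_n$. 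You instead apply $\bigstar$ to $\pi_1\circ f$, which makes the distinct-columns property immediate from (b) and avoids that hidden finiteness observation, but only gives column-constancy of $f$ on cones; you correctly recognize that this alone is insufficient (column $n_s$ could otherwise pick up as many as $n_s+1$ points, and unboundedly large columns would put the image outside $\mathcal{ED}_{fin}$) and repair it with a homogenization step via Proposition \ref{dicotomia conjunto} and Lemma \ref{subarboles}, a tool the paper's proof does not need. Your disjointification across barrier elements (pairwise disjoint infinite $X_i'\subseteq X_i$, exploiting that $s\mapsto n_s$ is injective on each $succ_S(t_i)$) replaces the paper's $k_n$-bookkeeping and is equally valid, and your verification that the assembled $R$ is a Laver subtree — every $S$-successor of a strict ancestor of $A$ lies below some element of $A$ because $A$ is a barrier — spells out what the paper leaves implicit. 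One shared, harmless blemish: both arguments assume the elements of $A$ are non-leaves of $S$; this is safe because the barriers produced in Propositions \ref{1-uniform} and \ref{generalcase} consist of infinitely branching nodes.
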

\begin{proof}
Since $T$ is $\alpha$-uniform, then by Proposition \ref{generalcase} we have that $\bigstar(T)$ holds. If $*(T,f)$, then we are done. Assume that $\bigtriangleup(T,f)$. Let $S\in\mathbb{L}(T)$ and $A\subset T$ that satisfies $\bigtriangleup(T,f).$ Let $\{s_n:n\in\omega\}$ be an enumeration of $A$. We will recursively construct $\{k_n:n\in\omega\}$ a sequence of natural numbers and $\{B_n:n\in\omega\}$ a sequence of finite subsets of $S$ such that for every $n,m\in\omega$ the following happens:

(1) $k_n<k_m$ whenever $n<m$.

(2) $f[B_n]\cap (\omega\backslash k_m\times\omega)=\emptyset$ whenever $n\leq m$.

(3) $B_n\cap B_m=\emptyset$ whenever $n\neq m.$

We start by choosing $t\in succ_S(s_0)$. Let $B_0=\{t\}$ and $k_0=\pi[f[L(S_{t})]]+1$ where $\pi$ is the projection on the first coordinate. Note that $n_0$ is well defined because $\bigtriangleup(T,f)$. Now suppose that we already constructed $B_n$ and $k_n$. For every $s_i$ with $i\leq n+1$ choose $t_i\in succ_S(s_i)$ such that $\pi[f[L(S_{t_i})]]> k_n$ and $\pi[f[L(S_{t_i})]]\neq \pi[f[L(S_{t_j})]]$ whenever $i\neq j$. This can be done because of  $(a)$ and $(b)$ in the Definition of $\bigtriangleup(T,f)$. Let $B_{n+1}=\{t_i:i\leq n+1\}$ and define $k_{n+1}=\max\{\pi[f[L(S_{t_i})]:i\leq n+1\}+1$. This finish the construction of $\{k_n:n\in\omega\}$ and $\{B_n:n\in\omega\}$. Let $B=\bigcup_{n\in\omega}B_n$ and define $R\subseteq T$ as the downwards clousure of $\bigcup_{t\in B} S_t$. Note that for every $s\in A$ there are $\omega$ many elements of $R$ that contain $s$. Thus, $A\subset R$ and therefore $R$ is a Laver subtree of $T$. Finally, note that by construction we have that $|f[L(R)]\cap(\{n\}\times\omega)|\leq 1$ so, $f[L(R)]\in\mathcal{ED}_{fin}$.
\end{proof}
 
\begin{prop}
$\mathcal{ED}_{fin}\nleq_K fin^\alpha$ for every $\alpha<\omega_1$. In particular, every Shelah ultrafilter is a $\mathcal{ED}_{fin}$-ultrafilter. 
\end{prop}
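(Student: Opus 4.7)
The plan is to argue by contradiction using the trees machinery and the preceding proposition as the crucial combinatorial input. Suppose $\mathcal{ED}_{fin} \leq_K fin^\alpha$ for some $\alpha<\omega_1$. Choose any tree $T\in\mathbb{W}$ that is $\alpha$-uniform (such trees exist by Corollary \ref{densosuniformes}, or simply by constructing one). By Proposition \ref{Katetovequivalent}, $\mathcal{F}^\ast(T)\cong_K fin^\alpha$, and composing Katětov morphisms we obtain a function $g:L(T)\longrightarrow\Delta$ witnessing $\mathcal{ED}_{fin}\leq_K\mathcal{F}^\ast(T)$, i.e.\ $g^{-1}[I]\in\mathcal{F}^\ast(T)$ for every $I\in\mathcal{ED}_{fin}$.

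The heart of the argument is then the previous proposition applied to $g$: it yields $R\in\mathbb{L}(T)$ such that $g[L(R)]\in\mathcal{ED}_{fin}$. Setting $I=g[L(R)]$, the Katětov property gives $g^{-1}[I]\in\mathcal{F}^\ast(T)$. Since $\mathcal{F}^\ast(T)$ is an ideal (hence closed under subsets) and $L(R)\subseteq g^{-1}[g[L(R)]]=g^{-1}[I]$, we conclude $L(R)\in\mathcal{F}^\ast(T)$. On the other hand, because $R\in\mathbb{L}(T)$ and $L(R)\subseteq L(R)$, Proposition \ref{dicotomia conjunto}(2) gives $L(R)\in\mathcal{F}(T)^+$, so $L(R)\notin\mathcal{F}^\ast(T)$, a contradiction. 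The only step that takes any thought is making sure the Katětov reduction transfers cleanly from $fin^\alpha$ to $\mathcal{F}^\ast(T)$, which is immediate from Proposition \ref{Katetovequivalent} together with the transitivity of $\leq_K$.

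For the \textquotedblleft in particular\textquotedblright\ clause, let $\mathcal{U}$ be a Shelah ultrafilter. The ideal $\mathcal{ED}_{fin}$ is Borel, hence analytic, so the characterization lemma stated just before Definition of $\bigstar(T)$ applies: $\mathcal{ED}_{fin}\leq_K\mathcal{U}^\ast$ would force $\mathcal{ED}_{fin}\leq_K fin^\alpha$ for some $\alpha<\omega_1$, contradicting what we just proved. Therefore $\mathcal{ED}_{fin}\nleq_K\mathcal{U}^\ast$, which by Definition \ref{Iultrafilters} says exactly that $\mathcal{U}$ is an $\mathcal{ED}_{fin}$-ultrafilter.
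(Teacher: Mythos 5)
Your proof is correct, and it is precisely the argument the paper intends: the proposition is stated there without any written proof, as an immediate consequence of the preceding proposition on $\alpha$-uniform trees combined with Proposition \ref{Katetovequivalent}, Proposition \ref{dicotomia conjunto}, and the lemma on analytic ideals contained in the dual of a Shelah ultrafilter --- exactly the ingredients you assemble. The contradiction you derive ($L(R)\in\mathcal{F}^{\ast}(T)$ from the Kat\v{e}tov morphism, while $L(R)\in\mathcal{F}(T)^{+}$ since $R\in\mathbb{L}(T)$) is the intended one, so there is nothing to add.
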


\begin{cor}
Every Shelah ultrafilter is Q-point. 
\end{cor}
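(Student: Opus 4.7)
The plan is to chain the immediately preceding proposition with the classical characterization of Q-points recorded in Proposition \ref{P-pointandfin}(3). The preceding proposition tells us $\mathcal{ED}_{fin}\nleq_{K}fin^{\alpha}$ for every $\alpha<\omega_{1}$, and combined with the key lemma of this section (which says that an analytic ideal is Kat\v{e}tov below $\mathcal{U}^{*}$ for a Shelah ultrafilter $\mathcal{U}$ only when it is Kat\v{e}tov below some $fin^{\alpha}$), this gives $\mathcal{ED}_{fin}\nleq_{K}\mathcal{U}^{*}$, i.e.\ $\mathcal{U}$ is an $\mathcal{ED}_{fin}$-ultrafilter.

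From there, the only observation needed is that $\leq_{KB}$ is a strengthening of $\leq_{K}$: every Kat\v{e}tov-Blass morphism is a Kat\v{e}tov morphism, so $\mathcal{ED}_{fin}\leq_{KB}\mathcal{U}^{*}$ would imply $\mathcal{ED}_{fin}\leq_{K}\mathcal{U}^{*}$. Contrapositively, $\mathcal{ED}_{fin}\nleq_{K}\mathcal{U}^{*}$ yields $\mathcal{ED}_{fin}\nleq_{KB}\mathcal{U}^{*}$. Applying Proposition \ref{P-pointandfin}(3) then concludes that $\mathcal{U}$ is a Q-point.

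There is essentially no obstacle: the corollary is a direct logical consequence of results already in hand. The only tiny point to be careful about is that the Q-point characterization in Proposition \ref{P-pointandfin}(3) is phrased using $\leq_{KB}$ rather than $\leq_{K}$; but since $\leq_{K}$ is the weaker relation, a non-comparison for $\leq_{K}$ automatically gives the (weaker-looking, but here needed) non-comparison for $\leq_{KB}$. So the proof is a two-line deduction and requires no new combinatorial work.
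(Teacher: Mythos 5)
Your proof is correct and matches the paper's intended argument exactly: the corollary is meant as an immediate consequence of the preceding proposition (every Shelah ultrafilter is an $\mathcal{ED}_{fin}$-ultrafilter, i.e.\ $\mathcal{ED}_{fin}\nleq_K\mathcal{U}^*$) together with Proposition \ref{P-pointandfin}(3), using the paper's own observation that $\leq_{KB}$ implies $\leq_K$, so $\nleq_K$ yields $\nleq_{KB}$. You even correctly handle the small notational slip in the paper's statement of the Q-point characterization (which writes $\mathcal{U}$ where the dual ideal $\mathcal{U}^*$ is meant); nothing is missing.
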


We now turn our attention to the right side of the diagram. 

Recall that an ideal $\mathcal{I}$ satisfies the $\textit{Fubini property}$ (see \cite{RadonIdeals}) if for any $A$ Borel subset of $\omega\times2^\omega$ and any $\epsilon>0,$ if $\{n\in\omega:\lambda^*(A_n)>\epsilon\}\in\mathcal{I}^+$, then we have that $\lambda^*(\{x\in 2^\omega: A^x\in\mathcal{I}^+\})>\epsilon$ where $A_n=\{x\in2^\omega:(n,x)\in A\}$, $A^x=\{n\in\omega:(n,x\in A)\}$ and $\lambda^*$ denotes the outer Lebesgue measure on $2^\omega$. 

In Theorem 2.1 of \cite{Solecki1}, it was indirectly shown that Solecki ideal is critical for ideals satisfying the Fubini property. The following Theorem is just a consequence of Solecki's result. 

\begin{theorem}
An ideal $\mathcal{I}$ fails to satisfy the Fubini property if
and only if there is an $\mathcal{I}$-positive set $X$ such that $\mathcal{S}\leq_K\mathcal{I}\upharpoonright X$.
\end{theorem}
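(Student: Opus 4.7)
The plan is to establish the two directions separately, recognizing that this theorem is largely a reformulation of Theorem~2.1 of \cite{Solecki1}. For the forward direction $(\Rightarrow)$, assume $\mathcal{I}$ fails Fubini with witness $A\subseteq\omega\times 2^\omega$ and $\epsilon>0$. Solecki's construction produces, by clopen approximation of the sections $A_n$ on a carefully chosen $\mathcal{I}$-positive subset $X\subseteq\{n:\lambda^*(A_n)>\epsilon\}$, a map $f:X\to\Omega$ satisfying $f^{-1}(I_x)\in\mathcal{I}$ for every $x\in 2^\omega$. Since the family $\{I_x:x\in 2^\omega\}$ generates $\mathcal{S}$ and preimages commute with finite unions, this is exactly a Kat\v{e}tov morphism witnessing $\mathcal{S}\leq_K\mathcal{I}\upharpoonright X$.

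For the reverse direction $(\Leftarrow)$, suppose $X\in\mathcal{I}^+$ and $f:X\to\Omega$ witnesses $\mathcal{S}\leq_K\mathcal{I}\upharpoonright X$. The plan is to construct a Borel set witnessing Fubini failure by pulling back the natural witness for $\mathcal{S}$ itself, namely
\[
B=\{(n,x)\in\omega\times 2^\omega:n\in X\text{ and }x\in f(n)\}.
\]
Its vertical sections are $B^x=f^{-1}(I_x)\in\mathcal{I}$ by the Kat\v{e}tov property, so $\{x:B^x\in\mathcal{I}^+\}$ is empty and hence has outer Lebesgue measure $0$. Its horizontal sections $B_n=f(n)$ are clopen of measure at most $1/2$. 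The remaining step is to exhibit an $\epsilon>0$ for which $\{n\in X:\lambda(f(n))>\epsilon\}$ is $\mathcal{I}$-positive; then $B$ (as a Borel subset of $\omega\times 2^\omega$) witnesses the failure of the Fubini property.

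The hard part is precisely this last step: an arbitrary Kat\v{e}tov morphism from $\mathcal{S}$ could in principle have $\lambda(f(n))\to 0$, and since $\mathcal{I}$ need not be a $\sigma$-ideal, one cannot immediately deduce from the decomposition $\{n\in X:f(n)\neq\emptyset\}=\bigcup_k\{n:\lambda(f(n))>2^{-k}\}$ that some individual stratum is $\mathcal{I}$-positive. The resolution, drawn from Solecki's argument, is a preparatory normalization: any $f$ witnessing $\mathcal{S}\leq_K\mathcal{I}\upharpoonright X$ can be replaced by $f':X'\to\Omega$ on an $\mathcal{I}$-positive $X'\subseteq X$ with $\lambda(f'(n))$ uniformly bounded below, by exploiting that the generators $I_x$ of $\mathcal{S}$ are themselves controlled in Lebesgue measure. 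With such an $f'$ in hand, the analogous pull-back $B'$ witnesses Fubini failure with, say, $\epsilon=1/8$, and this transfers to $\mathcal{I}$ itself since $X'\subseteq\omega$.
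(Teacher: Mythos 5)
Your reverse-direction construction starts correctly, but the step you yourself flag as ``the hard part'' is a genuine gap, and the normalization you propose to fill it is not just difficult but false. Before addressing it, note that the paper contains no proof of this theorem to compare against: it attributes the result to Theorem 2.1 of \cite{Solecki1} and refers to \cite{FubinipropMichalyDavid} for a complete proof. Your forward direction is likewise a citation-level gesture; it hides the real work, namely that a Kat\v{e}tov morphism into $\mathcal{S}$ requires $f^{-1}(I_x)\in\mathcal{I}$ for \emph{every} $x\in 2^\omega$, whereas a witness to Fubini failure only controls those $x$ outside a set of outer measure $\epsilon$ on which $A^x$ may be $\mathcal{I}$-positive, and that the clopen approximations must be normalized to have measure exactly $1/2$ to land in $\Omega$.

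The gap: you claim any $f$ witnessing $\mathcal{S}\leq_K\mathcal{I}\upharpoonright X$ can be replaced by $f'$ on an $\mathcal{I}$-positive $X'\subseteq X$ with $\lambda(f'(n))$ uniformly bounded below, ``exploiting that the generators $I_x$ of $\mathcal{S}$ are themselves controlled in Lebesgue measure.'' That justification is empty --- the generators $I_x$ are subsets of $\Omega$, not of $2^\omega$, and carry no measure control --- and the claim fails outright for $\Omega$ as printed in the paper, i.e.\ $\{A\in Clop(2^\omega):\lambda(A)\leq 1/2\}$. Let $C_n=\{x\in 2^\omega: x(i)=0 \text{ for } i<n \text{ and } x(n)=1\}$; these are pairwise disjoint nonempty clopen sets of measure $2^{-n-1}$, so each $I_x$ contains at most one $C_n$, and hence $n\mapsto C_n$ is a Kat\v{e}tov morphism witnessing $\mathcal{S}\leq_K fin = fin\upharpoonright\omega$. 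Since $fin$ has the Fubini property (Lemma \ref{finFubiniproperty}), no normalization can exist here: if $\lambda(f'(n))\geq\delta$ on an infinite $X'$, then $\lambda\bigl(\limsup_{n\in X'}f'(n)\bigr)\geq\delta$, so some $x$ lies in infinitely many $f'(n)$ and $f'^{-1}(I_x)$ is infinite, contradicting the morphism property. Indeed this example shows the theorem itself is false under the printed ``$\lambda(A)\leq 1/2$''; that clause is a typo for Solecki's original ``$\lambda(A)=1/2$'' (which the paper tacitly assumes later when it asserts that $\mathcal{S}$ is tall --- with ``$\leq$'' the family $\{C_n:n\in\omega\}$ shows $\mathcal{S}$ is not tall). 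With the correct definition your difficulty evaporates rather than requiring resolution: every Kat\v{e}tov morphism automatically satisfies $\lambda(f(n))=1/2$, so your pull-back $B$ witnesses the failure of the Fubini property with $\epsilon=1/4$ directly, the bad set $\{x:B^x\in\mathcal{I}^+\}$ being empty. You should therefore replace the normalization step by the corrected definition of $\Omega$, after which your $(\Leftarrow)$ argument is complete.
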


The reader can consult \cite{FubinipropMichalyDavid} for a complete proof of the previous Theorem. 

\begin{lemma}\label{finFubiniproperty}
$fin$ and $bnd(\alpha)$ ($\omega<\alpha<\omega_1$) have the Fubini property.
\end{lemma}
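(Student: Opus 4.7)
The plan is to derive both parts from the reverse Fatou lemma for sequences of Borel sets in the probability space $(2^\omega,\lambda)$.

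For $\mathcal{I}=fin$: let $A\subseteq\omega\times 2^\omega$ be Borel and $\epsilon>0$, and suppose $B=\{n\in\omega:\lambda(A_n)>\epsilon\}$ is infinite. The target set
$$C=\{x\in 2^\omega:A^x\text{ is infinite}\}=\bigcap_{k\in\omega}\bigcup_{n\geq k}A_n=\limsup_n A_n$$
is Borel and contains $\limsup_{n\in B}A_n$. Applying the reverse Fatou lemma to the subsequence $(A_n)_{n\in B}$ gives $\lambda(C)\geq\limsup_{n\in B}\lambda(A_n)\geq\epsilon$, which is the desired conclusion.

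For $\mathcal{I}=bnd(\alpha)$ with $\omega<\alpha<\omega_1$: since $\alpha$ is a countable limit ordinal, it has cofinality $\omega$. The hypothesis is that $B=\{\beta<\alpha:\lambda(A_\beta)>\epsilon\}$ is unbounded in $\alpha$, and combining unboundedness of $B$ with countable cofinality of $\alpha$ one picks an increasing $\omega$-sequence $\beta_0<\beta_1<\cdots$ entirely inside $B$ with $\sup_n\beta_n=\alpha$. The target set
$$D=\{x\in 2^\omega:A^x\text{ is unbounded in }\alpha\}=\bigcap_{\beta<\alpha}\bigcup_{\gamma>\beta}A_\gamma$$
is Borel, being a countable intersection of countable unions of Borel sets (which is where we need $\alpha$ countable), and it contains $\limsup_n A_{\beta_n}$. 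Reverse Fatou again yields $\lambda(D)\geq\limsup_n\lambda(A_{\beta_n})\geq\epsilon$.

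The main conceptual step in both cases is extracting a single sub-index set along which the sections are simultaneously of measure exceeding $\epsilon$ and cofinal/infinite in the ambient index set, after which reverse Fatou does all of the work. For $fin$ this is just picking out the infinite set $B$; for $bnd(\alpha)$ it is the only place the hypothesis $\omega<\alpha<\omega_1$ enters, through the interplay between countable cofinality and $\mathcal{I}$-positivity. No real obstacle arises beyond verifying Borel-measurability of $C$ and $D$, which reduces to countability of the index sets. Any slack between the strict inequality in the statement and the non-strict inequality given by reverse Fatou is a matter of convention and is handled by noting that the hypothesis $\lambda(A_n)>\epsilon$ can be read as $\lambda(A_n)\geq\epsilon+\delta$ for suitable witnesses when necessary.
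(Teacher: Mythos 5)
Your proof is correct, but it takes a genuinely different route from the paper's. The paper never touches the measure-theoretic definition: it invokes the Solecki-type characterization stated immediately before the lemma (an ideal fails the Fubini property if and only if $\mathcal{S}\leq_K\mathcal{I}\upharpoonright X$ for some positive $X$), and then observes that every positive restriction of $fin$, and likewise of $bnd(\alpha)$, is Kat\v{e}tov equivalent to $fin$, which is not tall, while $\mathcal{S}$ is tall; since by Lemma \ref{Basic Katetov} a tall ideal cannot be Kat\v{e}tov below a non-tall one, $\mathcal{S}\nleq_K fin\upharpoonright X$ and $\mathcal{S}\nleq_K bnd(\alpha)\upharpoonright A$ for every positive $X$, $A$. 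You instead verify the Fubini property directly from its definition: extract an infinite (respectively cofinal, using that a countable limit ordinal has cofinality $\omega$) subfamily of indices whose sections have measure exceeding $\epsilon$, note that the conclusion set contains the corresponding $\limsup$ and is Borel by countability of the index set, and apply reverse Fatou. Your argument is more elementary and self-contained, as it does not rely on the cited (and nontrivial) Solecki theorem; the paper's argument is shorter given the machinery already in place and stays inside the Kat\v{e}tov framework used throughout the section, which also makes the role of tallness and Kat\v{e}tov uniformity explicit.

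One caveat, concerning your final remark about strict versus non-strict inequality: the proposed fix does not work as stated. If $\lambda(A_n)=\epsilon+1/n$ on an infinite set of indices, there is no single $\delta>0$ with $\lambda(A_n)\geq\epsilon+\delta$ for infinitely many $n$, so reverse Fatou cannot be upgraded to give $\lambda(C)>\epsilon$; indeed, for a decreasing sequence $A_n$ with $\lambda(A_n)=\epsilon+2^{-n}$ the conclusion set has measure exactly $\epsilon$, which shows that under the strict reading of the paper's displayed definition even $fin$ would fail the Fubini property. The strict inequality in that definition is evidently a typo: the standard Kanovei--Reeken/Solecki formulation asks only for outer measure at least $\epsilon$ in the conclusion, and that is precisely what your reverse Fatou bound delivers, so no repair is needed.
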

\begin{proof}
Since $\mathcal{S}$ is tall and $fin$ is Kat\v{e}tov uniform, then $\mathcal{S}\nleq_K fin\upharpoonright X$ for every $X\in fin^+$. On the other hand, it is well known that $bnd(\alpha)$ is Kat\v{e}tov uniform and it is not tall. Thus, $fin\cong_K bnd(\alpha)\upharpoonright A$ for every $A\in bnd(\alpha)^+$ and therefore we have that $\mathcal{S}\nleq_K bnd(\alpha)\upharpoonright A$ for every $A\in bnd(\alpha)^+$.
\end{proof}

\begin{theorem}[Kanovei and Reeken \cite{RadonIdeals}]\label{productFubiniproperty}
Assume that $\mathcal{I}$ is an ideal on a countable set $I$ and $\mathcal{J}_i$ is an ideal on a countable set $J_i$ for any $i\in I$. If $\mathcal{I}$ and every $\mathcal{J}_i$ have the Fubini property, then $\lim_{i\rightarrow\mathcal{I}}\mathcal{J}_i$ has the Fubini property.
\end{theorem}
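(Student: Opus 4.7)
The plan is to verify the Fubini property for $L=\lim_{i\rightarrow\mathcal{I}}\mathcal{J}_i$ on $P=\bigcup_{i\in I}(\{i\}\times J_i)$ by applying the given Fubini properties in two stages: first each $\mathcal{J}_i$ handles the vertical slices $A_i\subseteq J_i\times 2^\omega$, and then $\mathcal{I}$ is applied to a single Borel set assembled over $I\times 2^\omega$.

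Concretely, I fix a Borel $A\subseteq P\times 2^\omega$ and $\epsilon>0$, and assume $Y=\{p\in P:\lambda^*(A_p)>\epsilon\}$ belongs to $L^+$. By definition of $L$, the set $I_0=\{i\in I:Y(i)\in\mathcal{J}_i^+\}$ lies in $\mathcal{I}^+$. For each $i\in I_0$ consider the Borel section $A_i=\{(j,x)\in J_i\times 2^\omega:((i,j),x)\in A\}$; then $Y(i)=\{j\in J_i:\lambda^*((A_i)_j)>\epsilon\}\in\mathcal{J}_i^+$, so the Fubini property of $\mathcal{J}_i$ gives $\lambda^*(B_i)>\epsilon$, where $B_i=\{x\in 2^\omega:(A_i)^x\in\mathcal{J}_i^+\}$. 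Using inner regularity, I would then pick a Borel $D_i\subseteq B_i$ with $\lambda(D_i)>\epsilon$ and assemble $D=\bigcup_{i\in I_0}(\{i\}\times D_i)\subseteq I\times 2^\omega$, which is Borel because $I$ is countable. Applying the Fubini property of $\mathcal{I}$ to $D$, together with $\{i\in I:\lambda^*(D_i)>\epsilon\}\supseteq I_0\in\mathcal{I}^+$, yields $\lambda^*(\{x:D^x\in\mathcal{I}^+\})>\epsilon$. Finally, $D_i\subseteq B_i$ forces $D^x\subseteq\{i:x\in B_i\}$, so any $x$ with $D^x\in\mathcal{I}^+$ satisfies $\{i:(A_i)^x\in\mathcal{J}_i^+\}\in\mathcal{I}^+$, i.e., $A^x\in L^+$. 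Hence $\lambda^*(\{x:A^x\in L^+\})\ge\lambda^*(\{x:D^x\in\mathcal{I}^+\})>\epsilon$, as required.

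The step I expect to require the most care is the inner-regularity argument, which needs each $B_i$ to be Lebesgue measurable; this is not implied by the bare statement of the Fubini property. However, in the intended setting it comes for free: if each $\mathcal{J}_i$ is Borel, analytic, or coanalytic (as is the case for $fin$ and $bnd(\alpha)$ in Lemma \ref{finFubiniproperty}), the slice map $x\mapsto(A_i)^x$ is Borel and $B_i$ inherits the definability class of $\mathcal{J}_i^+$, which in all these cases is universally measurable. Without such a definability hypothesis one would have to replace the inner-regularity step by a more delicate outer-measure approximation, but this refinement is not needed for the applications envisaged in the present paper.
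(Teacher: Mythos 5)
The paper does not actually prove this theorem --- it is quoted from Kanovei and Reeken with a citation and no argument --- so there is no in-paper proof to match; your attempt has to stand on its own. Its skeleton is the natural two-stage argument, and your bookkeeping is correct throughout: $Y\in L^{+}$ is indeed equivalent to $I_{0}=\{i\in I: Y(i)\in\mathcal{J}_i^{+}\}\in\mathcal{I}^{+}$, the identifications $Y(i)=\{j:\lambda^{*}((A_i)_j)>\epsilon\}$ and $(A^{x})(i)=(A_i)^{x}$ are right, and the closing chain of implications from $D^{x}\in\mathcal{I}^{+}$ to $A^{x}\in L^{+}$ is valid.

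The gap is the one you flagged, but it is more serious than your closing remarks suggest. The theorem as stated carries no definability hypothesis on the ideals, and for arbitrary $\mathcal{J}_i$ the inner-regularity step does not merely ``require care'' --- it can fail outright: a non-measurable set $B_i$ with $\lambda^{*}(B_i)>\epsilon$ may contain no measurable set of positive measure at all (a Bernstein set has full outer measure while all of its closed, hence all of its measurable, subsets are null). Moreover, the escape route you gesture at, ``a more delicate outer-measure approximation,'' is not routine: the only canonical approximation available without measurability is a measurable hull $H_i\supseteq B_i$, and enlarging $B_i$ breaks the final step, since $D^{x}\subseteq\{i:x\in H_i\}$ no longer yields $\{i:x\in B_i\}\in\mathcal{I}^{+}$ --- the inclusion runs the wrong way. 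So your argument does not prove the cited theorem in its stated generality; some genuinely different idea (this is what the Kanovei--Reeken proof must supply) is needed there. On the credit side, your definability fallback is correct and correctly justified: if $\mathcal{J}_i$ is Borel (resp.\ analytic or coanalytic), then $B_i$ is Borel (resp.\ coanalytic or analytic), hence universally measurable, and your proof closes. Since the paper invokes the theorem only for the Borel ideals $fin$, $bnd(\alpha)$ and $fin^{\beta}$ in the recursion giving the Fubini property of $fin^{\alpha}$, your restricted version does cover every use made of the theorem here --- but you should state the theorem you actually proved with the definability hypothesis included, rather than presenting the general statement as established.
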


In particular, the Fubini product of two ideals with the Fubini property has the Fubini property. Thus, as an immediate consequence of Lemma \ref{finFubiniproperty} and Theorem \ref{productFubiniproperty} we have the next result.

\begin{theorem}
$fin^\alpha$ has the Fubini property for any $\alpha<\omega_1$. In particular, we have that  $\mathcal{S}\nleq_K fin^\alpha$ for any $\alpha<\omega_1$.
\end{theorem}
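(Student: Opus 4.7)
The plan is to establish the Fubini property by transfinite induction on $\alpha<\omega_1$, exploiting the recursive definition of $fin^\alpha$ together with the closure of the Fubini property under the $\lim$ operation. The ``in particular'' clause will then follow immediately by contraposition from the Solecki-type characterization quoted just before.

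For the induction, the base case $\alpha=1$ is given by Lemma \ref{finFubiniproperty}, since $fin^1=fin$. For the successor step, note that $fin^{\alpha+1}=fin\times fin^\alpha=\lim_{n\to fin}fin^\alpha$ (constant family). By Lemma \ref{finFubiniproperty} the outer ideal $fin$ has the Fubini property, and by the inductive hypothesis so does $fin^\alpha$, so Theorem \ref{productFubiniproperty} gives the Fubini property for $fin^{\alpha+1}$. For a limit ordinal $\alpha$, Definition \ref{fin alpha} yields $fin^\alpha=\lim_{\beta\to bnd(\alpha)}fin^\beta$; Lemma \ref{finFubiniproperty} guarantees that $bnd(\alpha)$ has the Fubini property, the inductive hypothesis yields the Fubini property for each $fin^\beta$ with $\beta<\alpha$, and Theorem \ref{productFubiniproperty} again applies.

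For the last assertion, suppose toward a contradiction that $\mathcal{S}\leq_K fin^\alpha$ for some $\alpha<\omega_1$. Since $X_\alpha\in (fin^\alpha)^+$ and $fin^\alpha\upharpoonright X_\alpha = fin^\alpha$, the Solecki-type theorem stated above forces $fin^\alpha$ to fail the Fubini property, contradicting the first part of the statement.

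The main obstacle is really just correctly organizing the induction and confirming that the definitions of $fin^{\alpha+1}$ and $fin^\alpha$ at limit stages fit the hypothesis of Theorem \ref{productFubiniproperty} (i.e., recognizing them as $\lim$-constructions over ideals already known to have the Fubini property). All the serious work is imported from Lemma \ref{finFubiniproperty}, the Kanovei--Reeken theorem, and the Solecki characterization; what remains is essentially bookkeeping.
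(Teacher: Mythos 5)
Your proposal is correct and follows exactly the argument the paper intends: the paper states the theorem as an ``immediate consequence'' of Lemma \ref{finFubiniproperty} and Theorem \ref{productFubiniproperty}, leaving implicit precisely the transfinite induction (with $fin^{\alpha+1}=fin\times fin^\alpha$ and $fin^\alpha=\lim_{\beta\rightarrow bnd(\alpha)}fin^\beta$ as $\lim$-constructions) and the contrapositive use of the Solecki-type characterization with $X=X_\alpha$ that you spell out. No discrepancies to report.
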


We summarize the most important results of the section in the following Theorem.

\begin{theorem}
Let $\mathcal{U}$ be a Shelah ultrafilter and let $\mathcal{I}\in\{\mathcal{S}, nwd, \mathcal{ED}_{fin},\mathcal{I}_{\frac{1}{n}},\mathcal{Z}\}$. Then $\mathcal{U}$ is a $\mathcal{I}$-ultrafilter.
\end{theorem}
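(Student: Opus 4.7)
The plan is to apply the characterization lemma proved just above together with the Katětov diagram to reduce all five cases to two already-established non-reductions. By that lemma, and since each of $\mathcal{S}, nwd, \mathcal{ED}_{fin},\mathcal{I}_{\frac{1}{n}},\mathcal{Z}$ is Borel and hence analytic, showing that $\mathcal{U}$ is an $\mathcal{I}$-ultrafilter amounts to verifying $\mathcal{I}\nleq_{K} fin^{\alpha}$ for every $\alpha<\omega_{1}$. So the task is to check that each of the five ideals fails to sit Katětov below any $fin^{\alpha}$.

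Two of the five non-reductions are essentially already in hand from the preceding material, and I would invoke them as the base cases. First, $\mathcal{ED}_{fin}\nleq_{K} fin^{\alpha}$ follows from the $\bigstar(T)$ machinery and its consequence that on any $\alpha$-uniform tree $T$, every map $f:L(T)\to\Delta$ admits a Laver subtree whose image lies in $\mathcal{ED}_{fin}$. Second, $\mathcal{S}\nleq_{K} fin^{\alpha}$ follows from the Fubini property: $fin^{\alpha}$ has the Fubini property by the Kanovei--Reeken theorem, while failure of Fubini for an ideal $\mathcal{J}$ is equivalent to some positive $X$ admitting $\mathcal{S}\leq_{K}\mathcal{J}\upharpoonright X$, so no positive restriction of $fin^{\alpha}$ can be Katětov above $\mathcal{S}$.

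The remaining three cases are then pure transitivity. Reading off the arrows $\mathcal{S}\to nwd$, $\mathcal{ED}_{fin}\to\mathcal{I}_{\frac{1}{n}}$ and $\mathcal{I}_{\frac{1}{n}}\to\mathcal{Z}$ in the displayed diagram, I have $\mathcal{S}\leq_{K} nwd$, $\mathcal{ED}_{fin}\leq_{K}\mathcal{I}_{\frac{1}{n}}$, and, by composition, $\mathcal{ED}_{fin}\leq_{K}\mathcal{Z}$. A Katětov reduction of any of $nwd,\mathcal{I}_{\frac{1}{n}},\mathcal{Z}$ into some $fin^{\alpha}$ would then, by transitivity of $\leq_{K}$, give a reduction of $\mathcal{S}$ or $\mathcal{ED}_{fin}$ into $fin^{\alpha}$, contradicting the base cases.

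I do not expect a substantive obstacle: the theorem is a bookkeeping assembly. The main conceptual effort (the $\bigstar$-machinery handling $\mathcal{ED}_{fin}$, the Fubini-based non-reduction for $\mathcal{S}$, and the characterization lemma reducing $\mathcal{I}\leq_{K}\mathcal{U}^{\ast}$ to reducibility into some $fin^{\alpha}$) has already been carried out, so what remains is to name the appropriate reductions from the diagram and chain them through the transitivity of $\leq_{K}$.
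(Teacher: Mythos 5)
Your proposal is correct and is essentially the paper's own argument: the paper states this theorem as a summary of the section, assembled exactly as you do from the characterization lemma (reducing $\mathcal{I}\nleq_K\mathcal{U}^*$ to $\mathcal{I}\nleq_K fin^{\alpha}$ for all $\alpha<\omega_1$), the two base non-reductions $\mathcal{ED}_{fin}\nleq_K fin^{\alpha}$ (via the $\bigstar$-machinery) and $\mathcal{S}\nleq_K fin^{\alpha}$ (via the Fubini property), and transitivity along the arrows $\mathcal{S}\leq_K nwd$, $\mathcal{ED}_{fin}\leq_K\mathcal{I}_{\frac{1}{n}}\leq_K\mathcal{Z}$ of the displayed diagram. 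No gaps; your bookkeeping matches the intended proof.
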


\section{Questions}

In this last section, we will state some problems that we do not know how to solve. The following question is motivated by Proposition \ref{U_genisrainbowRamsey}.

\begin{question}
Is any Shelah ultrafilter a rainbow Ramsey ultrafilter?
\end{question}
\quad
We know that $\mathcal{U}_{gen}$ is a rainbow Ramsey ultrafilter but we do not know if this is the case for every Shelah ultrafilter.\\

Let $D$ and $E$ be two directed orders. We say that a function $f:E\longrightarrow D$ is $\textit{cofinal}$ if the image of each cofinal subset of $E$ is cofinal in $D$. We say that $D$ is $\textit{Tukey reducible}$ to $E$ ($D\leq_T E$) if there exists a cofinal map from $E$ to $D$. If both $D\leq_T E$ and $E\leq_T D$, then we write $E\equiv_T D$ and say that $D$ and $D$ are $\textit{Tukey equivalent}$. $\equiv_T$ is an equivalence relation and $\leq_T$ on the equivalence classes forms a partial ordering.
The equivalence classes can be called $\textit{Tukey types}$ or $\textit{Tukey degrees}$.

It is easy to see that the directed set $\langle[\mathfrak{c}]^{<\omega},\subseteq\rangle$ is the maximal Tukey type among all directed partial orderings of cardinality $\mathfrak{c}$ and  in \cite{Isbell} Isbell proved that there is $\mathcal{U}$ a Tukey-top ultrafilter (i.e. $\mathcal{U}\equiv_T ([\mathfrak{c}]^{<\omega},\subseteq)$). 

It is known that P-points are not Tukey top (see \cite{TukeyTypes}), that is, if $\mathcal{U}$ is a P-point, then $\langle[\mathfrak{c}]^{<\omega},\subseteq\rangle\nleq_T\mathcal{U}$. On the other hand, by Corollary \ref{ShelahisnotP-point} we have that Shelah ultrafilters cannot be P-points. So, we ask:\\

\begin{Question}
Does $\mathcal{U}_{gen}$ satisfy $\langle[\mathfrak{c}]^{<\omega},\subseteq\rangle\nleq_T\mathcal{U}_{gen}$? If this is the case, does any Shelah ultrafilter satisfy the same?
\end{Question}
\quad

The definition of Shelah ultrafilter attempts to axiomate $\mathcal{U}_{gen}$, however, we do not know if this is sufficient to fully characterize it. The next question is motivated by this and the following two Theorems.

\begin{theorem}[Todorcevic see \cite{Semiselectivecoideals}] [LC]
Every Ramsey ultrafilter is $\mathcal{P}(\omega)/fin$-generic over $L(\mathbb{R})$.
\end{theorem}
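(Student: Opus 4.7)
The plan is to leverage two well-known ingredients: the Mathias characterization of Ramsey ultrafilters via the local Ramsey property, and the fact that under large cardinals every set of reals in $L(\mathbb{R})$ has strong regularity properties. Given a Ramsey ultrafilter $\mathcal{U}$, I need to show that $\mathcal{U}$ meets every dense open $D \subseteq \mathcal{P}(\omega)/fin$ with $D \in L(\mathbb{R})$.

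First I would unfold the forcing: identify conditions in $\mathcal{P}(\omega)/fin$ with representatives in $[\omega]^\omega$ ordered by $\subseteq^*$, and for a given $D \in L(\mathbb{R})$ set $\widetilde{D} = \{A \in [\omega]^\omega : [A]_{fin} \in D\}$. Then $\widetilde{D}$ belongs to $L(\mathbb{R})$ and, by density of $D$ in $\mathcal{P}(\omega)/fin$, for every $A \in [\omega]^\omega$ there is $B \subseteq^* A$ with $B \in \widetilde{D}$.

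Second, I would invoke the consequence of large cardinals that $L(\mathbb{R}) \models \mathsf{AD}$ and that every set of reals in $L(\mathbb{R})$ is completely Ramsey in the Galvin–Prikry/Mathias sense; equivalently, every $X \in L(\mathbb{R}) \cap \mathcal{P}([\omega]^\omega)$ satisfies the Ellentuck dichotomy. Apply this to $\widetilde{D}$: there exists $A \in [\omega]^\omega$ with either $[A]^\omega \subseteq \widetilde{D}$ or $[A]^\omega \cap \widetilde{D} = \emptyset$. The second alternative is incompatible with the density of $D$ (every infinite subset of $A$ could be further refined into $\widetilde{D}$), so $[A]^\omega \subseteq \widetilde{D}$.

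Third, I would use the fact that a Ramsey ultrafilter is a \emph{selective coideal} and diagonalizes the collection of completely Ramsey subsets of $[\omega]^\omega$ (Mathias' theorem for happy families, extended to all sets in $L(\mathbb{R})$ using the Ramsey property established in the previous step). Concretely: since $\widetilde{D}$ is completely Ramsey and, by the previous paragraph, is ``somewhere $[A]^\omega$-dense'', one can diagonalize through $\mathcal{U}$ to produce some $B \in \mathcal{U}$ with $B \in \widetilde{D}$, so $[B]_{fin} \in D \cap \mathcal{U}$ as desired. The main obstacle is precisely this last step: passing from the abstract Ramsey-property of $\widetilde{D}$ in $L(\mathbb{R})$ to the existence of a witness inside the Ramsey ultrafilter $\mathcal{U}$. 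One needs the Mathias-style selective-coideal theorem applied not just to analytic but to all $L(\mathbb{R})$-sets, and this in turn relies on the large cardinal hypothesis to ensure that Mathias forcing below a Ramsey ultrafilter is proper with respect to $L(\mathbb{R})$ and that the generic reals it produces coincide with the Ramsey condition. Once those are in place, genericity of $\mathcal{U}$ over $L(\mathbb{R})$ is a direct consequence.
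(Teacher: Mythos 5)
The first thing to note is that the paper contains no proof of this statement: it is quoted as a black box from Todorcevic (via \cite{Semiselectivecoideals}) to motivate a question, so your attempt can only be measured against the known argument. Your outline follows the right general route but has a genuine gap at exactly the decisive point, one you yourself flag as ``the main obstacle.'' The heart of Todorcevic's theorem is the \emph{local} Ramsey property: under large cardinals, for every $X\subseteq[\omega]^{\omega}$ with $X\in L(\mathbb{R})$ and every Ramsey ultrafilter $\mathcal{U}$ there is $A\in\mathcal{U}$ such that $[A]^{\omega}\subseteq X$ or $[A]^{\omega}\cap X=\emptyset$; this is the extension of Mathias's happy-families theorem from analytic sets to all sets in $L(\mathbb{R})$, and it is precisely the content of the cited result. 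Once it is in hand, genericity is the two-line argument you give at the end: density of $D$ kills the disjoint alternative, and then $A\in\widetilde{D}$ since $A\in[A]^{\omega}$, so $\mathcal{U}$ meets $D$. Your step 3 simply asserts this local theorem (``Mathias' theorem for happy families, extended to all sets in $L(\mathbb{R})$'') and gestures at Mathias forcing guided by $\mathcal{U}$ being ``proper with respect to $L(\mathbb{R})$,'' without supplying any reason why large cardinals yield it. Since that localization \emph{is} the theorem being proved, the proposal as written is essentially circular: it reduces the statement to itself.

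Two smaller points. Your step 2 --- the plain, non-local Ellentuck dichotomy applied to $\widetilde{D}$ --- is both insufficient and superfluous: the homogeneous set $A$ it produces need not belong to $\mathcal{U}$, and the local version needed in step 3 subsumes it entirely. Moreover, be careful attributing the Ramsey property of $L(\mathbb{R})$-sets to $\mathsf{AD}$ alone: it is a well-known open problem whether $\mathsf{ZF}+\mathsf{AD}$ implies that every set of reals is completely Ramsey. The actual proofs proceed through the Suslin (homogeneously Suslin, universally Baire) tree representations of $L(\mathbb{R})$-sets furnished by the large cardinal hypothesis, and it is the absoluteness between $V$ and the Mathias extension guided by $\mathcal{U}$ given by those representations that drives Todorcevic's localization to the ultrafilter. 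Filling in your step 3 means carrying out exactly that argument; everything before it is routine bookkeeping about $\mathcal{P}(\omega)/fin$ versus $([\omega]^{\omega},\subseteq^{*})$, which you do handle correctly.
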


\begin{theorem}[Chodounský, Zapletal \cite{Idealsandtheirgenultrafilter}][LC]
Let $\mathcal{I}$ be an $F_\sigma$ ideal on $\omega$ and let $\mathcal{U}$ be an ultrafilter on $\omega$ disjoint from $\mathcal{I}$. The following are equivalent:
\begin{enumerate}[\hspace{0.5cm} (1)]
    \item $\mathcal{U}$ is $\mathbb{P}(\mathcal{I})$-generic over $L(\mathbb{R})$\footnote{For every ideal $\mathcal{I}$ on $\omega$, we denote by  $\mathbb{P}(\mathcal{I})$ the set of all $\mathcal{I}$-positive subsets of $\omega$ ordered by
inclusion.}.
    \item $\mathcal{U}$ is P-point and for every simplicial complex $\mathcal{K}$ on $\omega$, we have that $\mathcal{U}$ contains either a $\mathcal{K}$-set or a set all of whose $\mathcal{K}$-subsets belong to $\mathcal{I}$\footnote{A \emph{simplicial complex} on $\omega$ is a collection of finite subsets of $\omega$ closed
under subset. If $\mathcal{K}$ is a simplicial complex on $\omega$, then a $\mathcal{K}$-set is a set $A\subset\omega$ such
that every finite subset of $A$ is in $\mathcal{K}$.}.
\end{enumerate}
\end{theorem}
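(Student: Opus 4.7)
The plan is to prove the two directions by very different arguments: $(1) \Rightarrow (2)$ amounts to unwinding the definition of genericity into specific dense sets inside $\mathbb{P}(\mathcal{I})$, while $(2) \Rightarrow (1)$ is where the large cardinal hypothesis is essential, being used to reduce all dense subsets of $\mathbb{P}(\mathcal{I})$ in $L(\mathbb{R})$ to instances of the combinatorial dichotomy in (2). Throughout, I would exploit Mazur's representation of an $F_\sigma$ ideal as $\mathcal{I} = \{A \subseteq \omega : \varphi(A) < \infty\}$ for some lower semicontinuous submeasure $\varphi$, so that positivity in $\mathbb{P}(\mathcal{I})$ reads as $\varphi(A) = \infty$, a property friendly to diagonal-type constructions.

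For $(1) \Rightarrow (2)$, both clauses of (2) reduce to exhibiting density of appropriate subsets of $\mathbb{P}(\mathcal{I})$ that already live in $V \subseteq L(\mathbb{R})$. To obtain the P-point property, given a decreasing sequence $\langle U_n : n \in \omega \rangle$ in $\mathcal{U}$, I would show that $D = \{A \in \mathbb{P}(\mathcal{I}) : A \subseteq^{*} U_n \text{ for every } n\}$ is dense: inside any $B \in \mathbb{P}(\mathcal{I})$, each $B \cap U_n$ is still $\mathcal{I}$-positive (since $\omega \setminus U_n \notin \mathcal{U}$, so it cannot be $\mathcal{I}$-positive in a way that swallows $B$), and a fusion using $\varphi$ produces a pseudo-intersection of infinite submeasure. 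Genericity then gives a witness in $\mathcal{U}$. For the simplicial complex condition, given $\mathcal{K}$, the corresponding dense set $D_\mathcal{K}$ is dense by a direct Ramsey-type lemma for $F_\sigma$ ideals: starting from a positive $A$, either one can recursively pick elements staying inside $\mathcal{K}$ while maintaining $\varphi$-growth (producing a positive $\mathcal{K}$-set), or the failure of this process localizes to a positive subset $B \subseteq A$ whose $\mathcal{K}$-subsets all have uniformly bounded $\varphi$-value, hence lie in $\mathcal{I}$.

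For $(2) \Rightarrow (1)$, the large cardinal hypothesis guarantees that every subset of $\mathbb{P}(\mathcal{I})$ in $L(\mathbb{R})$ is universally Baire, so any dense open $D \in L(\mathbb{R})$ carries an absolute tree representation. The strategy is, given such $D$ and $A \in \mathcal{U}$, to produce $B \in \mathcal{U} \cap D$ as follows. Associate to $D$ a simplicial complex $\mathcal{K}_D$ consisting (essentially) of the finite sets that are initial segments of members of $D$; apply the dichotomy from (2) to $\mathcal{K}_D$. If $\mathcal{U}$ contains a $\mathcal{K}_D$-set, then the P-point property allows one to thin this set to something actually meeting $D$ (via a standard fusion using the submeasure $\varphi$ and the countable-closure-like behaviour of $\mathcal{U}$ afforded by the P-point property). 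The other alternative---$\mathcal{U}$ contains a set whose $\mathcal{K}_D$-subsets all lie in $\mathcal{I}$---would contradict $\mathcal{U} \cap \mathcal{I} = \emptyset$ together with the denseness of $D$ and the faithfulness of the encoding.

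The main obstacle is this faithful encoding step: showing that for an arbitrary dense $D \in L(\mathbb{R})$, the ground-model simplicial complex $\mathcal{K}_D$ really does capture $D$ up to combinatorics, so that the dichotomy (which is only postulated for $V$-simplicial complexes) transfers to genericity for $\mathbb{P}(\mathcal{I})$ over $L(\mathbb{R})$. This is where the tree representations coming from LC must be combined with the $F_\sigma$ structure of $\mathcal{I}$, in a Mathias-forcing-style absoluteness argument, to convert a combinatorial victory against $\mathcal{K}_D$ into genuine membership in $D$. Verifying that the two properties in (2) are not just necessary but jointly \emph{sufficient} for this absoluteness is the technical heart of the theorem.
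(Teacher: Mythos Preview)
The paper does not prove this theorem: it is stated in the final section (Questions) purely as motivation, with a citation to Chodounsk\'y and Zapletal, and no argument is given. There is therefore nothing in the paper to compare your proposal against.

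That said, a brief remark on your sketch. In the direction $(1)\Rightarrow(2)$ you propose, for the P-point clause, to use the dense set $D=\{A\in\mathbb{P}(\mathcal{I}):A\subseteq^{*}U_n\text{ for all }n\}$. This set is defined from the sequence $\langle U_n\rangle$, which lives in $V$, not in $L(\mathbb{R})$; since $\mathcal{U}$ is only assumed generic over $L(\mathbb{R})$, you cannot invoke genericity directly for $D$. The usual fix is to argue instead that the forcing $\mathbb{P}(\mathcal{I})$ itself, via the submeasure $\varphi$, has enough $\sigma$-closure (more precisely, a fusion property) to force the generic ultrafilter to be a P-point, and this is a property of the forcing provable in $L(\mathbb{R})$. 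Your treatment of the simplicial-complex clause and of $(2)\Rightarrow(1)$ is, at the level of a sketch, in line with the strategy of the cited reference, and you have correctly identified the encoding of an arbitrary $L(\mathbb{R})$-dense set by a ground-model simplicial complex as the technical crux.
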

\quad

\begin{Question}[LC] Let $\mathcal{U}$ be a Shelah ultrafilter. Is there a $\mathbb{P}$-generic filter over $L(\mathbb{R})$, let say $G$, such that $\mathcal{U}=\bigcup_{p\in G}\mathcal{F}(p)$? If not, is there a combinatorial characterization of the generic filter added by $\mathbb{P}$?
\end{Question}
\quad

Forcing with $\mathbb{P}$ gives us an ultrafilter that avoids isomorphic copies of the ideals $fin^\alpha$ for every $\alpha\in\omega_1$. However, $\mathbb{P}$ does not seem to be the most natural forcing  to add such ultrafilter. We could try to simplify the forcing by using the partial order
$\mathbb{Q}=\{\mathcal{F}:(\mathcal{F}\textit{ is a filter})\wedge(\exists\alpha<\omega_1)(\mathcal{F}^*\textit{ is isomorphic to }fin^\alpha)\}$ ordered by inclusion. So, we could ask if $\mathbb{Q}$ satisfies the same properties of $\mathbb{P}$ as a forcing.
\quad

\begin{question}
Is $\mathbb{Q}$ $\sigma$-closed?
\end{question}
\quad

Even when $\mathbb{Q}$ is more natural, it seems that the Definition of $\mathbb{P}$ makes it behave nicely.

\section{Acknowledgment}
We would like to thanks Michael Hrušák for valuable comments and hours of
stimulating conversations. 

{\normalsize

\printbibliography
}
\end{document}